\documentclass[a4,reqno]{amsart}
\usepackage[english]{babel}
\usepackage{xcolor}
\usepackage{enumerate}

\usepackage[colorlinks=false,backref=false,pagebackref=false,pdftex,pdfauthor={D. Bonheure, E. Moreira dos Santos, H. Tavares},pdftitle={}]{hyperref}
\usepackage{amsmath,amsthm,amsfonts,latexsym,amssymb, hyperref}
\usepackage{comment}

\usepackage[textwidth=20mm]{todonotes}

\newcommand{\innt}{{\rm int\,}}

\newcommand{\R}{{\mathbb R}}

\newcommand{\N}{{\mathbb N}}

\newcommand{\cH}{{\mathcal H}}

\newcommand{\cO}{{\mathcal O}}

\textheight22.5cm \textwidth165mm
\topmargin-4mm
\hoffset-20mm
\usepackage{setspace}
\setstretch{1.1}

\newcommand{\eps}{\varepsilon}
\newcommand{\vphi}{\varphi}

\def\sideremark#1{\ifvmode\leavevmode\fi\vadjust{\vbox to0pt{\vss
 \hbox to 0pt{\hskip\hsize\hskip1em
 \vbox{\hsize2.1cm\tiny\raggedright\pretolerance10000
  \noindent #1\hfill}\hss}\vbox to15pt{\vfil}\vss}}}%

\numberwithin{equation}{section}
\newtheorem{theorem}{Theorem}[section]
\newtheorem{lemma}[theorem]{Lemma}

\newtheorem{remark}[theorem]{Remark}
\newtheorem{proposition}[theorem]{Proposition}

\newtheorem{corollary}[theorem]{Corollary}

\begin{document}

\title[Nodal Solutions]
{Nodal Solutions for sublinear-type problems with Dirichlet boundary conditions}

%
\author[D. Bonheure]{Denis Bonheure}
\address{Denis Bonheure \newline \indent D{\'e}partement de Math{\'e}matique \newline \indent  Universit{\'e} libre de Bruxelles \newline \indent CP 214,  Boulevard du Triomphe, B-1050 Bruxelles, Belgium}
\email{denis.bonheure@ulb.ac.be}
\author[E. Moreira dos Santos]{Ederson Moreira dos Santos}
\address{Ederson Moreira dos Santos \newline \indent Instituto de Ci{\^e}ncias Matem\'aticas e de Computa\c{c}\~ao - ICMC \newline \indent Universidade de S\~ao Paulo - USP \newline \indent
Caixa Postal 668, CEP 13560-970 - S\~ao Carlos - SP - Brazil}
\email{ederson@icmc.usp.br}
\author[E. Parini]{Enea Parini}
\address{Enea Parini \newline \indent Aix-Marseille Universit\'e, CNRS, Centrale Marseille, I2M, UMR 7373 \newline \indent 39 Rue Frederic Joliot Curie, 13453 Marseille, France}
\email{enea.parini@univ-amu.fr}

\author[H. Tavares]{Hugo Tavares}   
\address{Hugo Tavares \newline \indent CAMGSD and Mathematics Department,
\newline \indent Instituto Superior T\'ecnico, Universidade de Lisboa   \newline \indent
Av. Rovisco Pais, 1049-001 Lisboa, Portugal}
\email{hugo.n.tavares@tecnico.ulisboa.pt}

\author[T. Weth]{Tobias Weth}
\address{Tobias Weth \newline \indent Goethe-Universit\"at Frankfurt, Institut f\"ur Mathematik\newline \indent Robert-Mayer-Str. 10, D-60629 Frankfurt, Germany}
\email{weth@math.uni-frankurt.de}

\date{\today}



\begin{abstract} 
We consider nonlinear second order elliptic problems of the type
\[
-\Delta u=f(u) \text{ in } \Omega, \qquad u=0  \text{ on } \partial \Omega,
\]
where $\Omega$ is an open $C^{1,1}$--domain in $\R^N$, $N\geq 2$,  under some general assumptions on the nonlinearity  that include the case of a sublinear pure power  $f(s)=|s|^{p-1}s$ with $0<p<1$ and of Allen-Cahn type $f(s)=\lambda(s-|s|^{p-1}s)$ with $p>1$ and $\lambda>\lambda_2(\Omega)$ (the second Dirichlet eigenvalue of the Laplacian). We prove the existence of a least energy nodal (i.e. sign changing) solution, and of a nodal solution of mountain-pass type. We then give explicit examples of domains where the associated levels \emph{do not} coincide. For the case where $\Omega$ is a ball or annulus and $f$ is of class $C^1$, we prove instead that the levels coincide, and that least energy nodal solutions are nonradial but axially symmetric functions. Finally, we provide stronger results for the Allen-Cahn type nonlinearities in case $\Omega$ is either a ball or a square. In particular we give a complete description of the solution set for $\lambda\sim \lambda_2(\Omega)$, computing the Morse index of the solutions.
\end{abstract}

\subjclass[2010]{35B07, 35J15, 35J61}

\keywords{Least energy nodal solutions, Morse index, Sublinear type problems, Symmetry of solutions}

\maketitle

\section{Introduction} 
We consider nonlinear second order elliptic equations of the type
\begin{equation}\label{eq:mainproblem}
\begin{cases}
-\Delta u=f(u) &\text{ in } \Omega,\\
u=0 & \text{ on } \partial \Omega,
\end{cases}
\end{equation}
where $\Omega$ is an open $C^{1,1}$--bounded domain in $\R^N$, $N\geq 2$. We assume  general hypotheses on $f$ such as continuity, oddness, strict monotonicity of $f(s)/s$ in $(0,\infty)$ together with sublinear-type conditions. Prototypes of those functions are $f(s)=|s|^{p-1}s$ with $0<p<1$ (sublinear power), or $f(s)=\lambda(s-|s|^{p-1}s)$ with $p>1$ and $\lambda>\lambda_1(\Omega)$ (Allen-Cahn type, also referred to as bistable nonlinearities). Here $(\lambda_k(\Omega))$, for short $(\lambda_k)$, denotes the sequence of eigenvalues of $(\Delta, H^1_0(\Omega))$. The main goal of this paper is to study nodal solutions of \eqref{eq:mainproblem}, with a special emphasis on those having least energy with respect to the associated Euler-Lagrange functional. We provide a unified proof of known existence results and include new results in several directions.

Under our hypotheses, we show that \eqref{eq:mainproblem} has a unique bounded positive solution $w$, and $w$ and $-w$ are the global minimizers of the corresponding energy functional $I: H^1_0(\Omega) \to \mathbb{R}$,
\begin{equation*}
I(u)=\begin{cases}
\frac{1}{2}\int_\Omega |\nabla u|^2-\int_\Omega F(u) & \text{ if } F(u)\in L^1(\Omega),\\
+\infty & \text{ if } F(u)\not \in L^1(\Omega),
\end{cases}
\end{equation*}
with $F(t):=\int_0^t f(s)\, ds$; see Theorem \ref{th:existencemin} and Proposition \ref{prop:uniqueness} ahead. We do not assume any growth condition on $|f|$ at infinity, and for that reason the energy  functional is not always finite. We observe that the existence and uniqueness of these signed solutions are known, e.g. \cite{Berestycki1981, Berger1979}, via sub-super solutions method. Here we give a variational characterization of these solution, which is essential for the construction of a special nodal solution.

Using paths connecting these global minimizers, we prove the existence of a bounded nodal mountain pass solution of \eqref{eq:mainproblem} (Theorem \ref{thm:MP}), at the min-max critical level
\[
c_{mp}=\inf_{\gamma\in \Gamma} \sup_{t\in [0,1]} I(\gamma(t)),\quad \text{ with } \quad \Gamma=\{\gamma\in C([0,1],H^1_0(\Omega)):\ \gamma(0)=-w,\ \gamma(1)=w \}.
\]
Moreover, among the set of bounded nodal solutions, we prove the existence of one that achieves the least energy nodal level (Theorem \ref{thm:lensachieved}), defined as:
\[
c_{nod}=\inf\{I(u):\ u\in H^1_0(\Omega)\cap L^\infty(\Omega), u^\pm\not\equiv 0,\ -\Delta u=f(u)\}.
\]
A natural question is whether $c_{nod}=c_{mp}$. In Theorem \ref{thm:cmp_neqc_nod} we provide an example of a domain, a dumbbell with a thin channel, where this equality does not hold. This shows that, in general, the set of low energy nodal solutions to sublinear problem has a more complicated structure than in the case of linear or superlinear problems. In particular, we point out that in the linear case of the Dirichlet eigenvalue problem for the Laplacian, the second Dirichlet eigenvalue 
corresponds to the least energy of sign changing eigenfunctions, and this eigenvalue has the mountain pass characterization
\begin{equation*}
\lambda_2(\Omega)  	=\inf_{c\in \Lambda} \sup_{t\in [0,1]} \int_\Omega |\nabla c (t)|^2 = \inf\left\{\int_\Omega |\nabla u|^2: u\in H^1_0(\Omega),\ \|u\|_{L^2}=1,\ \text{$u$ is a nodal eigenfunction}\right\},
\end{equation*}
where
\[
\Lambda=\left\{c\in C([0,1],H^1_0(\Omega)),\ c(0)=-\varphi_1,\ c(1)=\varphi_1,\ \|c(t)\|_{L^2}=1\ \forall t\right\},
\]
with $\varphi_1$ being the first $L^2$-normalized positive eigenfunction (see e.g. \cite{CuestaFigueiredoGossez99}).

In the case where $f$ is of class $C^1$, we also use the Morse index to characterize nodal solutions. In Theorem \ref{prop:boundmorseindex} ahead, we show that every bounded solution $u$ of (\ref{eq:mainproblem}) with $I(u)=c_{nod}$ has Morse index $m(u)$ less than or equal to $1$. Here we recall that $m(u)$ is defined as the number of negative Dirichlet eigenvalues of the operator $-\Delta - f'(u)$ in $\Omega$ (counted with multiplicity). 
Moreover, if a bounded solution $u$ with $I(u)=c_{nod}$ satisfies $m(u)=1$, then $u$ is of mountain pass type, and we have $c_{nod}=c_{mp}$ in this case, see Theorem~\ref{sec:lens-mp}. In particular, we shall see that this is the case in bounded radial domains $\Omega$. In this case, we also deduce that every least energy nodal solution $u$ is nonradial and foliated Schwarz symmetric. More precisely, $u$ is axially symmetric and strictly decreasing with respect to the polar angle from the symmetry axis, see Theorem~\ref{sec:ball-case}.

The results mentioned in the last two paragraphs show that least energy nodal solutions of sublinear-type problems with Dirichlet boundary conditions might have different variational characterizations and different Morse indices, zero or one, on different domains. Observe that this is in sharp contrast with the superlinear case, where the Morse index of these solutions is always two; see \cite{BartschChangWang, BartschWeth, CastroCossioNeuberger}. In this paper we provide examples of sets where $c_{nod}=c_{mp}$, and sets where this does not happen. To understand the general picture is an open problem. We conjecture that if $\Omega$ is a convex $C^{1,1}$ domain, then both levels coincide.

In the last part of this work, we focus on the specific Allen-Cahn type problem
\begin{equation}\label{eq:AC-intro}
\begin{cases}
-\Delta u=\lambda(u-|u|^{p-1}u)\\
 u\in H^1_0(\Omega)
 \end{cases}\qquad (p>1,\ \lambda>\lambda_2(\Omega)).
\end{equation}
proving complementary results for regular domains and in the case of a simple Lipschitz domain which is nonregular: a square in dimension 2. Given any $C^{1,1}$--domain $\Omega$, there exists $\eps>0$ such that $c_{nod}=c_{mp}$ whenever $\lambda\in (\lambda_2(\Omega),\lambda_2(\Omega)+\eps)$, see Theorem \ref{thm:Morselambdasmall}. We describe and characterize completely the set of bounded nodal solutions in the case of the ball (Theorem \ref{thm:symmetry}) and in the case of a square in dimension 2 (Theorem \ref{th:delpino}), still for $\lambda\sim \lambda_2(\Omega)$; the set is the union of branches that start at the eigenspace associated with $\lambda_2(\Omega)$. In the case of the ball all solutions are least energy nodal solutions; in the square there are exactly four branches of solutions, and we compute their Morse indices and energies,  showing which branches correspond to solutions with minimal energy (Theorem \ref{th:square}). The results in this paragraph complement previous results by Miyamoto \cite{Miyamoto} (in the ball) and del Pino, Garc\'ia-Meli\'an, Musso \cite{delPinoGarciaMelianMusso} (in the square).

We conclude this introduction with some additional literature on related problems. Via min-max methods, it is shown in  \cite{BartschWillem} that sublinear-type problems like \eqref{eq:mainproblem} have actually infinitely many nodal solutions. The pure power case $f(u)=|u|^{p-1}u$ ($1<p<2$) with \emph{Neumann} boundary conditions is treated in \cite{PariniWeth} (see also \cite[Corollary 1.4]{SaldanaTavares}). In this case all nontrivial solutions change sign and there exists a least energy nodal solution, with variational characterization
\[
c_{nod}=\inf\left\{\frac{1}{2}\int_\Omega |\nabla u|^2-\frac{1}{p+1}\int_\Omega |u|^{p+1}:\ u\in H^1(\Omega),\ \int_\Omega |u|^{p-1}u=0  \right\}.
\]
Whenever the domain is a ball or an annulus, minimizers are not radial but merely foliated Schwarz symmetric. These results were extended to Lane-Emden system in \cite{SaldanaTavares}. The existence of infinitely many solutions for the single equation with Neumann boundary condition is shown in \cite{Du}.

\section{Positive Solutions and properties of bounded solutions}\label{section:positivesolutions}

Let $\Omega$ be either an open $C^{1,1}$--domain in $\R^N$, $N\geq 2$,  and let $f:\R\to \R$ be a function satisfying the following assumptions:
\begin{itemize}
\item[(A1)] $f$ is odd and continuous;
\item[(A2)] $s\mapsto \frac{f(s)}{s}$ is (strictly) decreasing in $(0,\infty)$;
\item[(A3)] $\lim \limits_{s\to 0} \frac{f(s)}{s}>\lambda_1(\Omega)$;
\item[(A4)] $\lim \limits_{s\to \infty} \frac{f(s)}{s}<\lambda_1(\Omega)$.
\end{itemize}

Consider $F(t):=\int_0^t f(s)\, ds$. Associated to \eqref{eq:mainproblem}, set the energy functional $I:H^1_0(\Omega)\to \overline\R$ defined by
\begin{equation}\label{eq:functional_I}
I(u)=\begin{cases}
\frac{1}{2}\int_\Omega |\nabla u|^2-\int_\Omega F(u) & \text{ if } F(u)\in L^1(\Omega),\\
+\infty & \text{ if } F(u)\not \in L^1(\Omega),
\end{cases}
\end{equation}
and the least energy level
\[
m=m(\Omega):=\inf\{I(v):\ v\in H^1_0(\Omega)\}.
\]

\begin{lemma}[$m$ is achieved]\label{lemma:m_achieved} We have $-\infty<m<0$, and there exists $u\in H^1_0(\Omega)\backslash\{0\}$ such that $I(u)=m$.
\end{lemma}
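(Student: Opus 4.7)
The plan is to establish the three claims --- $m > -\infty$, $m < 0$, and that $m$ is attained --- via the direct method of the calculus of variations, the crucial technical ingredient being a quadratic upper bound on $F$ with a subcritical coefficient. Setting $\ell := \lim_{s \to \infty} f(s)/s$, which is well-defined by (A2) and strictly less than $\lambda_1(\Omega)$ by (A4), fix any $\ell' \in (\ell, \lambda_1(\Omega))$. Integrating $f(s) \le \ell' s$ for $s \ge R$ (large), and combining with continuity of $F$ on $[-R, R]$ and the evenness of $F$ (which follows from (A1)), one obtains a constant $C_1 > 0$ such that
\[
F(s) \le C_1 + \tfrac{\ell'}{2}\, s^2 \qquad \text{for all } s \in \R.
\]
Pairing this with the Poincar\'e inequality $\lambda_1(\Omega)\|u\|_{L^2}^2 \le \|\nabla u\|_{L^2}^2$ (the case $\ell' \le 0$ being even easier), one gets $I(u) \ge \tfrac12(1 - \ell'/\lambda_1(\Omega))\|\nabla u\|_{L^2}^2 - C_1|\Omega|$, which yields simultaneously $m > -\infty$ and coercivity of $I$ on $H^1_0(\Omega)$.

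For $m < 0$, I would test $I$ on $t\varphi_1$, where $\varphi_1 > 0$ is a first Dirichlet eigenfunction (bounded on $\overline\Omega$ by elliptic regularity). Condition (A3) provides $L > \lambda_1(\Omega)$ and $\delta > 0$ with $f(s) \ge L s$ on $[0,\delta]$, whence by integration and oddness of $f$ one has $F(s) \ge \tfrac{L}{2} s^2$ for $|s| \le \delta$. Hence, choosing $t > 0$ so small that $t\|\varphi_1\|_\infty \le \delta$,
\[
I(t\varphi_1) \le \tfrac{t^2}{2}\bigl(\lambda_1(\Omega) - L\bigr)\|\varphi_1\|_{L^2}^2 < 0.
\]

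To produce a minimizer I would take a minimizing sequence $(u_n)$, which is bounded in $H^1_0(\Omega)$ by coercivity, and extract a subsequence with $u_n \rightharpoonup u$ weakly in $H^1_0(\Omega)$, strongly in $L^2(\Omega)$ and a.e. in $\Omega$. The gradient term is weakly lower semicontinuous. The main obstacle is passing to the limit in $\int_\Omega F(u_n)$: since no growth of $f$ at infinity is assumed, no Sobolev-embedding control is available, and $F$ may even be unbounded below. The substitute is to apply Fatou's lemma to the \emph{nonnegative} sequence
\[
g_n := C_1 + \tfrac{\ell'}{2} u_n^2 - F(u_n),
\]
whose nonnegativity comes precisely from the bound established in the first paragraph. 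Using $\|u_n\|_{L^2}^2 \to \|u\|_{L^2}^2$ this delivers $\limsup_n \int_\Omega F(u_n) \le \int_\Omega F(u)$, and therefore $I(u) \le \liminf_n I(u_n) = m$. By definition $I(u) \ge m$, so equality holds; and $u \not\equiv 0$ since $I(0) = 0 > m$.
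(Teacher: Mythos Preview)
Your proof is correct and follows essentially the same approach as the paper: the same quadratic upper bound $F(s)\le C+\tfrac{\ell'}{2}s^2$ from (A4), the same coercivity argument, the same test function $t\varphi_1$ for $m<0$, and a Fatou-type argument for the limit. The only cosmetic difference is that the paper first extracts an $L^2$ majorant $h$ with $|u_n|\le h$ and applies reverse Fatou with the fixed upper bound $C+\tfrac{\lambda_1}{2}h^2$, whereas you apply Fatou directly to $C_1+\tfrac{\ell'}{2}u_n^2-F(u_n)\ge 0$ and use $\|u_n\|_{L^2}^2\to\|u\|_{L^2}^2$; both variants are standard and yield the same conclusion.
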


\begin{proof}
1) Let us prove that $m<0$. Let $\varphi_1$ be the positive, $L^2$--normalized eigenfunction associated to $\lambda_1(\Omega)$. From assumption (A3) we deduce the existence of $\bar s,\delta>0$ such that 
\[
F(s)\geq \frac{(1+\delta)\lambda_1(\Omega)}{2}s^2\qquad \text{ for every $0\leq s\leq \bar s$}.
\]
By choosing $\varepsilon>0$ small so that $\varepsilon \|\varphi_1\|_\infty<\bar s$, 
\begin{align}
I(\varepsilon \varphi_1)=\frac{\varepsilon^2}{2}\int_\Omega |\nabla \varphi_1|^2-\int_\Omega F(\varepsilon \varphi_1) \leq \frac{\varepsilon^2}{2}\lambda_1(\Omega) - \varepsilon^2 \frac{1+\delta}{2}\lambda_1(\Omega)=-\frac{\varepsilon^2}{2}\delta \lambda_1(\Omega)<0.
\end{align}
\smallbreak

\noindent 2) We now prove that $I$ is coercive. Assumption (A4) implies the existence of $\varepsilon,C>0$ such that $F(s)\leq C+ \frac{(1-\varepsilon) \lambda_1(\Omega)}{2}s^2$ for all $s\in\R$. Thus, given $u\in H^1_0(\Omega)$ such that $F(u)\in L^1(\Omega)$, we have
\[
I(u)\geq \frac{1}{2}\int_\Omega |\nabla u|^2-C|\Omega| -\frac{(1-\varepsilon) }{2}\lambda_1(\Omega)\int_\Omega u^2 \geq \frac{\varepsilon}{2}\int_\Omega |\nabla u|^2-C|\Omega|.
\]
This implies that $m>-\infty$, as well as the coercivity of $I$.
\smallbreak
\noindent 3) Let us check that $m$ is achieved. Take a minimizing sequence $u_n$, which by the coercivity of $I$ is uniformly bounded in $H^1_0(\Omega)$. Then, up to a subsequence, $u_n \rightharpoonup u$ weakly in $H^1_0(\Omega)$, $u_n\to u$ strongly in $L^2(\Omega)$, and there exists $h\in L^2(\Omega)$ such that $|u_n|\leq h$ a.e. in $\Omega$, for every $n$ . We have
\[
F(u_n)\leq C+\frac{\lambda_1(\Omega)u_n^2}{2}\leq C+\frac{\lambda_1(\Omega)h^2}{2}
\]
and by the reverse Fatou lemma and the fact that $F(u)\leq C+\frac{\lambda_1(\Omega)h^2}{2}$ we have
\[
\limsup_{n \to +\infty} \int_\Omega F(u_n) \leq \int_\Omega \limsup_{n \to +\infty} F(u_n)=\int_\Omega F(u)<+\infty.
\]
Thus
\[
m=\lim_{n \to +\infty} I(u_n) \geq \liminf_{n \to +\infty} \frac{1}{2}\int_\Omega |\nabla u_n|^2-\limsup_{n \to +\infty} \int_\Omega F(u_n) \geq \frac{1}{2}\int_\Omega |\nabla u|^2-\int_\Omega F(u).
\]
So $F(u)\in L^1(\Omega)$, and $u$ achieves $m$.
\end{proof}

\begin{remark}\label{rem_energynegative}
For future reference, we remark that in paragraph 1) of the previous proof we showed that $I(\varepsilon \varphi_1)<0$ for small $\varepsilon>0$.
\end{remark}
Our aim now is to show the following.

\begin{theorem}\label{th:existencemin}
Assume that $f$ satisfies (A1)-(A4).
Let $u\in H^1_0(\Omega)$ achieve $m$. Then $u\in L^\infty(\Omega)$, $|u|>0$ in $\Omega$, and $u$ is a weak solution of \eqref{eq:mainproblem}.
\end{theorem}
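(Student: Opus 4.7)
The plan is to first reduce to the case of a nonnegative minimizer, then to establish the three properties sequentially, and finally to transfer them back to the original $u$. Since $f$ is odd, $F$ is even, and $|\nabla |u|| = |\nabla u|$ a.e.; hence $I(|u|) = I(u) = m$ and $v := |u|$ is also a minimizer, with $v \geq 0$. I will prove the three conclusions for $v$ and then deduce them for $u$ as follows: once $v \in L^\infty(\Omega) \cap C(\overline\Omega)$ with $v > 0$ in $\Omega$ is a weak solution, the condition $|u| = v > 0$ throughout the connected open set $\Omega$ forces $u \in H^1_0(\Omega)$ to have constant sign (a sign change inside $\Omega$ would produce a nodal set of positive capacity, on which $v$ would vanish), so $u = \pm v$, and all three properties transfer. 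I therefore assume $u \geq 0$ in what follows.

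To get the Euler--Lagrange equation I fix $\varphi \in C^\infty_c(\Omega)$ and consider $u_t := u + t\varphi$ for $|t|$ small. Outside $K := \mathrm{supp}(\varphi)$ nothing changes; on $K$ the identity
\[
\frac{F(u_t) - F(u)}{t} = \int_0^1 f(u + ts\varphi)\,\varphi\,ds
\]
gives the pointwise bound $\leq \|\varphi\|_\infty M(|u| + \|\varphi\|_\infty)$ where $M(r):=\max_{|s|\leq r}|f(s)|$. The crucial point is that $M(|u| + \|\varphi\|_\infty) \in L^1(K)$, which I plan to extract from $F(u) \in L^1$ together with the monotonicity (A2): in the only delicate regime (where $f/s$ diverges to $-\infty$ at infinity), $|F|$ grows comparably to $s\,|f|$, so $F(u) \in L^1$ yields $u\,|f(u)| \in L^1$, which via H\"older on the bounded set $K$ gives integrability of $|f(u)|$. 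Dominated Convergence then produces
\[
\frac{I(u_t) - I(u)}{t} \longrightarrow \int_\Omega \nabla u \cdot \nabla \varphi - \int_\Omega f(u)\,\varphi \qquad (t \to 0),
\]
and minimality forces this limit to be zero for every $\varphi$, so $u$ is a weak solution.

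For the $L^\infty$ bound I exploit that (A2) combined with continuity yields a linear one-sided estimate $f^+(s) \leq A s + B$ for all $s \geq 0$ (indeed $f(s)/s \leq f(1)$ for $s \geq 1$ by monotonicity, while $f$ is bounded on $[0,1]$); hence $u \geq 0$ is a weak subsolution of $-\Delta u \leq A u + B$, and De Giorgi--Nash--Moser theory yields $u \in L^\infty(\Omega)$. For strict positivity, the $L^\infty$ bound together with (A2)--(A4) implies $f(s)/s$ is bounded below on $(0,\|u\|_\infty]$, so for $K>0$ large enough the equation $-\Delta u + Ku = (K + f(u)/u)\,u$ satisfies $-\Delta u + Ku \geq 0$, with $u \geq 0$ and $u \not\equiv 0$ (since $m < 0 = I(0)$); the strong maximum principle then delivers $u > 0$ in $\Omega$. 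The main obstacle I anticipate is the Dominated Convergence step in the weak-solution argument: in the absence of any a priori growth assumption on $|f|$ at infinity, producing an $L^1$-integrable envelope $M(|u|+\|\varphi\|_\infty)$ requires the full force of $F(u) \in L^1$ together with the monotonicity (A2) to convert integral bounds on $F$ into pointwise bounds on $|f|$.
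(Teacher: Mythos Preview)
Your ordering---Euler--Lagrange equation first, then $L^\infty$, then positivity---has a genuine gap at the first step. The Dominated Convergence argument needs an $L^1$ majorant for $f(u+ts\varphi)\varphi$ on $K=\mathrm{supp}\,\varphi$, and your proposed justification that ``$|F|$ grows comparably to $s|f|$'' is false under (A2). Monotonicity of $f(s)/s$ goes the \emph{wrong} way: for large $s$ in the regime $f<0$, writing $g(t):=-f(t)/t$ (increasing) gives $-f(t)\le g(s)\,t$ for $t\le s$, hence
\[
|F(s)| \;\le\; C+\tfrac{1}{2}\,s\,|f(s)|,
\]
which is the opposite of what you need. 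Concretely, $f(s)=\Lambda s - s\,e^{s^2}$ with $\Lambda>\lambda_1(\Omega)+1$ satisfies (A1)--(A4), yet $|F(s)|\sim \tfrac12 e^{s^2}$ while $s|f(s)|\sim s^2 e^{s^2}$, so $s|f(s)|/|F(s)|\to\infty$. Thus $F(u)\in L^1$ does not yield $u|f(u)|\in L^1$, and your envelope $M(|u|+\|\varphi\|_\infty)$ is uncontrolled. Without this, you cannot differentiate $t\mapsto\int F(u+t\varphi)$, and then you also lose the subsolution inequality $-\Delta u\le Au+B$ which you were deducing from the equation.

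The paper's route reverses the order and thereby avoids the problem entirely. Working with $v=|u|\ge 0$, one tests minimality only with \emph{one-sided} perturbations $v-t\varphi$, $\varphi\in C^\infty_c(\Omega)$, $\varphi\ge 0$, $t\to 0^+$. In the difference quotient only an \emph{upper} bound for $f$ is required, and (A4) provides the linear control $0\le f^+(s)\le C+\lambda_1(\Omega)|s|$; dominated convergence is then legitimate and yields the weak inequality $-\Delta v\le f^+(v)$. From this, De Giorgi--Moser (as you outlined) gives $v\in L^\infty(\Omega)$. Once $u$ is bounded, the full two-sided Euler--Lagrange computation is trivial because $f(u+t\varphi)$ is uniformly bounded on $K$, and positivity follows by the strong maximum principle as you said. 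So your $L^\infty$ and positivity arguments are fine; the fix is to derive the subsolution inequality \emph{directly from minimality via nonnegative test functions}, using only $f^+$, and to postpone the Euler--Lagrange equation until after boundedness.
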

\begin{proof}
Let $u\in H^1_0(\Omega)$ be a minimizer of $I$, and along this proof let us denote $v:=|u|\in H^1_0(\Omega)$. Since $f$ is odd, then $F$ is even and $I(u)=I(v)$. In this proof we denote by $f^+(t):=\max\{f(t),0\}$ and $f^-(t):=\max \{-f(t),0\}$ respectively the positive and negative parts of $f$.
\smallbreak

\noindent 1) Let us check that $f^+(v)\in  L^1(\Omega)$ and that 
\begin{equation}\label{eq:subsolution}
-\Delta v\leq f^+(v) \qquad \text{weakly in $\Omega$}.
\end{equation}  
First of all, observe that by the continuity of $f$ and property (A4), we have
\begin{equation}\label{eq:bound_f^+}
0\leq f^+(s)\leq C+|s|\lambda_1(\Omega).
\end{equation}
Since $v\in H^1_0(\Omega)$, this yields $f^+(v)\in L^1(\Omega)$. As for the second claim, take a test function $\varphi\in C^\infty_c(\Omega)$, $\varphi\geq 0$. Then, by the minimality property of $v$,
\begin{align}
0\leq \liminf_{t\to 0^+} \frac{I(v-t\varphi)-I(v)}{t}&=-\int_\Omega \nabla v\cdot \nabla \varphi- \limsup_{t\to 0^+} \frac{1}{t} \int_\Omega (F(v-t\varphi)-F(v)) \\
   & = -\int_\Omega \nabla v\cdot \nabla \varphi + \liminf_{t\to 0^+} \int_\Omega \int_0^1 f(v-t\tau \varphi)\varphi\, d\tau\, dx\\
   &\leq -\int_\Omega \nabla v\cdot \nabla \varphi + \liminf_{t\to 0^+} \int_\Omega \int_0^1 f^+(v-t\tau \varphi)\varphi\, d\tau\, dx\\
   &=-\int_\Omega \nabla v\cdot \nabla \varphi + \int_\Omega  f^+(v)\varphi\, dx,
\end{align}
where to get the second to last inequality we used the sign of $\varphi$, whereas to get the convergence on the last equality we used Lebesgue's dominated convergence theorem and \eqref{eq:bound_f^+}.
\smallbreak

\noindent 2) We check that $v\in L^\infty(\Omega)$. This is a consequence of rewriting  the inequality \eqref{eq:subsolution} deduced in paragraph 1 as
\[
-\Delta v -V(x) v\leq g
\]
with
\[
V(x)=\frac{f^+(v)}{v}\chi_{\{v>1\}}\in L^\infty(\Omega),\qquad g=f^+(v) \chi_{\{0\leq v\leq 1\}} \in L^\infty(\Omega)
\]
($V\in L^\infty(\Omega)$ by (A2)). Then \cite[Theorem 8.15]{GilbargTrudinger} yields $v\in L^\infty(\Omega)$.
\smallbreak

\noindent 3) Since $v\in L^\infty(\Omega)$, then also $u\in L^\infty(\Omega)$ and we can compute, for any $\varphi\in C^\infty_c(\Omega)$,
\[
0 =\frac {d}{dt} I(u+t\varphi)|_{t=0}= \int_\Omega \nabla u\cdot \nabla \varphi- \int_\Omega f(u) \varphi,
\]
hence $u$ is a weak solution of \eqref{eq:mainproblem}. 

\smallbreak

\noindent 4) Finally, let us check that $v=|u|>0$. Since $F$ is even, then $v$ also achieves $m$ and by the previous paragraph it solves  $-\Delta v=f(v)$. By elliptic regularity  and Sobolev embeddings we have $v\in W^{2,p}\cap C^{1,\alpha}(\overline \Omega)$ for every $p\geq 1$, $\alpha\in (0,1)$. By (A3), there exists $\delta>0$ such that $f(s)>0$ for $0< s\leq \delta$. Thus we can apply the strong maximum principle (\cite[Theorem 9.6]{GilbargTrudinger}) to
\[
-\Delta v-c(x)v=f^+(v) \chi_{\{v>\delta\}}+f(v) \chi_{\{0\leq v\leq \delta\}}\geq 0
\]
with
\[
c(x)=\frac{f^-(v)}{-v}\chi_{\{v>\delta\}}\leq 0
\]
to obtain that either $v\equiv 0$ or $v>0$. Since $m<0$, the former case cannot occur.
\end{proof}

\begin{proposition} \label{prop:uniqueness} Assume that $f$ satisfies (A1)-(A4). Then there exists a unique positive bounded solution of \eqref{eq:mainproblem}.
\end{proposition}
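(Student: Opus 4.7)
The approach I would take is the classical Br\'ezis--Oswald uniqueness argument, which combines Picone's identity with the strict monotonicity hypothesis (A2).

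First, I would observe that for any positive bounded solution $u$ of \eqref{eq:mainproblem}, standard elliptic bootstrap arguments (identical to step~4 in the proof of Theorem~\ref{th:existencemin}) give $u\in C^{1,\alpha}(\overline\Omega)$ for some $\alpha\in(0,1)$. The strong maximum principle together with Hopf's lemma, both applicable because $\partial\Omega$ is $C^{1,1}$, then yield $u>0$ in $\Omega$ and $\partial u/\partial\nu<0$ on $\partial\Omega$, so that $u$ behaves like the distance to the boundary. If $u_1,u_2$ are two positive bounded solutions, this forces $u_1/u_2,\,u_2/u_1\in L^\infty(\Omega)$, and then a direct computation shows that $u_1^2/u_2$ and $u_2^2/u_1$ belong to $H^1_0(\Omega)\cap L^\infty(\Omega)$. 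They are therefore admissible test functions in the weak formulation of \eqref{eq:mainproblem}.

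Next, I would test the equation for $u_1$ against $u_1$ and the equation for $u_2$ against $u_1^2/u_2$, and subtract, to obtain
\[
\int_\Omega\!\Big(|\nabla u_1|^2-\nabla u_2\cdot \nabla(u_1^2/u_2)\Big)\,dx = \int_\Omega\!\Big(\tfrac{f(u_1)}{u_1}-\tfrac{f(u_2)}{u_2}\Big)u_1^2\, dx.
\]
The pointwise Picone identity
\[
|\nabla u_1|^2-\nabla u_2\cdot \nabla(u_1^2/u_2) = \Big|\nabla u_1-\tfrac{u_1}{u_2}\nabla u_2\Big|^2 \geq 0
\]
shows that the left-hand side is nonnegative. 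Swapping the roles of $u_1$ and $u_2$ and summing the two resulting inequalities gives
\[
0\leq \int_\Omega \Big(\tfrac{f(u_1)}{u_1}-\tfrac{f(u_2)}{u_2}\Big)(u_1^2-u_2^2)\, dx.
\]
But by the strict monotonicity of $s\mapsto f(s)/s$ in (A2), this integrand is $\leq 0$ almost everywhere in $\Omega$, and is strictly negative at every point where $u_1(x)\ne u_2(x)$. Therefore $u_1\equiv u_2$, which is the desired uniqueness.

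The main obstacle is the preliminary step: one has to justify that $u_i^2/u_j$ genuinely lies in $H^1_0(\Omega)$, which requires a quantitative control of the rate at which $u_j$ vanishes at $\partial\Omega$. This is precisely where the $C^{1,1}$ regularity of the domain is exploited, via Hopf's lemma applied to the $C^{1,\alpha}$--regular solutions. Once this technical point is secured, the rest of the proof is a routine application of Picone's identity combined with (A2), with no growth or regularity assumption on $f$ at infinity beyond those already in (A1)--(A4).
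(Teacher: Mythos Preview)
Your argument is correct and is precisely the Br\'ezis--Oswald uniqueness proof that the paper invokes by citation; the paper does not spell out the details but simply refers to \cite{BrezisOswald} (and \cite[Appendix II]{BrezisKamin}), whose core is exactly the Picone-type identity you wrote combined with (A2). One small remark: the statement also asserts \emph{existence}, which you do not address explicitly, but this is already contained in Theorem~\ref{th:existencemin} (the minimizer $|u|$ is a positive bounded solution), so nothing is missing.
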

\begin{proof} Since we are assuming (A2), this is a direct consequence of \cite{BrezisOswald}, see also of \cite[Appendix II]{BrezisKamin}.
\end{proof}

\noindent \textbf{Notation.} From now on, we will denote by $w$ the unique positive bounded solution of \eqref{eq:mainproblem}.

\begin{lemma}\label{lemma:uniformbounded} Let $f$ be a function satisfying (A1)-(A2). Let $u$ be any bounded solution of \eqref{eq:mainproblem}. Then 
\[
-w(x)\leq u(x)\leq w(x) \text{ for every } x\in \Omega.
\]
\end{lemma}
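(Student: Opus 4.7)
The plan is to show $u \le w$ pointwise; the inequality $-w \le u$ then follows by applying the same argument to $-u$, which is again a bounded solution by the oddness of $f$ in (A1).

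First, I would test both equations $-\Delta u = f(u)$ and $-\Delta w = f(w)$ against the admissible test function $\varphi := (u-w)^+ \in H^1_0(\Omega)$ (recall that $u,w \in H^1_0(\Omega)\cap L^\infty(\Omega)$) and subtract, obtaining
\[
\int_\Omega |\nabla \varphi|^2 \;=\; \int_{\{u>w\}} \bigl(f(u)-f(w)\bigr)(u-w).
\]
Next I would exploit (A2): on the set $\{u>w\}$ we have $u>w>0$, so $\tfrac{f(u)}{u} < \tfrac{f(w)}{w}$, which rearranges to the pointwise strict inequality
\[
f(u)-f(w) \;<\; \frac{f(w)}{w}(u-w)\qquad\text{on }\{u>w\}.
\]
Multiplying by $u-w>0$ and integrating yields
\[
\int_\Omega |\nabla \varphi|^2 \;\le\; \int_\Omega \frac{f(w)}{w}\,\varphi^2,
\]
with strict inequality if $|\{u>w\}|>0$. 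Note that $V:=f(w)/w$ belongs to $L^\infty(\Omega)$: by (A2) and (A3), $f(s)/s$ extends continuously at $0$ to a finite positive value, and $w\in L^\infty(\Omega)$ with $w>0$ in $\Omega$ by Theorem~\ref{th:existencemin}.

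The key reverse inequality comes from observing that $w$ is itself a positive eigenfunction of the Schrödinger operator $-\Delta - V$ with eigenvalue $0$, since $-\Delta w - V w = 0$ and $w > 0$ in $\Omega$. By the Krein--Rutman characterisation of the principal eigenvalue via positive eigenfunctions, $0$ is in fact the \emph{first} eigenvalue of $-\Delta - V$ on $H^1_0(\Omega)$, hence
\[
\int_\Omega |\nabla \varphi|^2 - \int_\Omega V\varphi^2 \;\ge\; 0 \qquad \text{for every } \varphi \in H^1_0(\Omega).
\]
Sandwiching the two estimates forces equality, which is incompatible with the strict inequality above unless $|\{u>w\}|=0$. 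Hence $u\le w$ a.e., and by the $C^{1,\alpha}$ regularity of bounded solutions this holds pointwise in $\Omega$.

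The main obstacle is reconciling the two estimates: (A2) yields a \emph{strict} reversed inequality on $\{u>w\}$, while the principal eigenvalue characterisation yields the forward (possibly non-strict) inequality; the strictness from the sublinear-type assumption is precisely what excludes a nontrivial superlevel set. The only minor technicality is the $L^\infty$ bound on $V=f(w)/w$, which is needed to legitimise the variational inequality for $-\Delta - V$ on $H^1_0(\Omega)$; this is ensured by (A2)--(A3) together with $w\in L^\infty$. The whole argument is essentially a Picone-type comparison.
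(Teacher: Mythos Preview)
Your overall Picone-type strategy is sound and different from the paper's, but there is a genuine gap in the key technical step. You claim that $V=f(w)/w\in L^\infty(\Omega)$ because ``by (A2) and (A3), $f(s)/s$ extends continuously at $0$ to a finite positive value.'' This is false in the generality needed: assumption (A3) only asserts $\lim_{s\to0}f(s)/s>\lambda_1(\Omega)$, and this limit is allowed to be $+\infty$. For the prototype $f(s)=|s|^{p-1}s$ with $0<p<1$ one has $f(s)/s=|s|^{p-1}\to+\infty$, and then $V=w^{p-1}$ blows up near $\partial\Omega$, where $w$ vanishes. Without $V\in L^\infty$ the principal-eigenvalue/Krein--Rutman step as you phrase it is not justified. (Note also that the lemma is stated under (A1)--(A2) only, so appealing to (A3) is extraneous in any case.)

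The argument can be repaired without the $L^\infty$ bound on $V$. Instead of invoking a spectral characterisation, test $-\Delta w=f(w)$ directly with $\psi:=\varphi^2/w$ and use Picone's pointwise identity
\[
|\nabla\varphi|^2-\nabla w\cdot\nabla\Bigl(\frac{\varphi^2}{w}\Bigr)=\Bigl|\nabla\varphi-\frac{\varphi}{w}\nabla w\Bigr|^2\ge 0,
\]
which upon integration yields $\int_\Omega|\nabla\varphi|^2\ge\int_\Omega V\varphi^2$. What must be checked is that $\psi\in H^1_0(\Omega)$. Since $u,w\in C^{1,\alpha}(\overline\Omega)$ and $\Omega$ is $C^{1,1}$, Hopf's lemma applied to the superharmonic function $w$ gives $w(x)\ge c\,d(x,\partial\Omega)$, while $|u(x)|\le C\,d(x,\partial\Omega)$ by $C^1$ regularity; hence $\varphi/w$ is bounded, and $\psi\in H^1_0(\Omega)$ follows. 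This fixes your proof.

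For comparison, the paper avoids this issue entirely: it works with the \emph{dual} Picone quantity $(-\Delta w)u-(-\Delta u)w=\bigl(\tfrac{f(w)}{w}-\tfrac{f(u)}{u}\bigr)uw$, integrates by parts over the smooth sublevel sets $\{u-w>\varepsilon_n\}\Subset\Omega$, shows the boundary terms are nonpositive (or vanish in the limit), and passes $\varepsilon_n\to0$. This Brezis--Oswald style argument never divides by $w$ and hence never confronts the singularity of $V$.
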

\begin{proof}
Let $\Omega':=\{x\in \Omega:\ u(x)>w(x)\}$ and assume by contradiction that $|\Omega'|>0$. Take a sequence $\varepsilon_n\to 0^+$  of regular values of $u-w$. Then $\Omega_n':=\{x\in \Omega:\ u(x)-w(x)>\varepsilon_n\}\Subset \Omega$ is a $C^{1,\alpha}$--domain and we can integrate by parts on it:
\begin{align}
\int_{\Omega_n'} \Bigl(\frac{f(w)}{w}-\frac{f(u)}{u}\Big)wu &=\int_{\Omega_n'} ((-\Delta w) u- (-\Delta u)w) = \int_{\partial \Omega_n'} \Bigl(\frac{\partial u}{\partial \nu} w - \frac{\partial w}{\partial \nu} u\Bigr)\\
&=\int_{\partial \Omega_n'} w\Bigl(\frac{\partial u}{\partial \nu}-\frac{\partial w}{\partial \nu}\Bigr)- \varepsilon_n \int_{\partial \Omega'_n} \frac{\partial w}{\partial \nu}
\end{align}
Since $w>0$ in $\Omega$ and $\frac{\partial }{\partial \nu} (u-w)\leq 0$ on $\partial \Omega_n'$, then  
\[
\int_{\partial \Omega_n'} w\Bigl(\frac{\partial u}{\partial \nu}-\frac{\partial w}{\partial \nu}\Bigr)\leq 0.
\]
As for the last term in the inequality,
\[
\varepsilon_n \int_{\partial \Omega'_n} \frac{\partial w}{\partial \nu}= \varepsilon_n \int_{\Omega_n'} \Delta w = -\varepsilon_n \int_{\Omega_n'} f(w)\to 0.
\]
Thus we conclude that 
\[
\int_{\Omega'}  \Bigl(\frac{f(w)}{w}-\frac{f(u)}{u}\Big)uw\leq 0,
\]
a contradiction by (A2) and $|\Omega'|>0$. Therefore $u(x)\leq w(x)$ for a.e. $x\in \Omega$. Analogously, one shows that $u(x)\geq -w(x)$.
\end{proof}

\section{Properties and Truncation of the functional $I$} \label{subsec:truncation}

Throughout this section we assume that $f$ satisfies (A1)-(A4).  By (A1) and (A2), there exists at most one zero of $f$ on the half-line $]0,+\infty[$. Therefore we might have two situations: either

\smallbreak
\begin{itemize}
\item[($f1$)] we have $f(s)>0$ for every $s>0$;
\end{itemize}
or 
\begin{itemize}
\item[($f2$)] there exists (a unique) $s_f>0$ such that $f(s_f)=0$; then $f>0$ in $(0,s_f)$ and $f<0$ in $]s_f,+\infty[$.
\end{itemize}

The typical example of nonlinearity satisfying ($f1$) is $f(s)=|s|^{p-1}s$ for $0<p<1$, while $f(s)=\lambda(u-|u|^{p-1}u)$ satisfies ($f2$) for $p>1$.

We will be able to prove unified theorems under ($f1$) and ($f2$), however the strategy for both cases will be, in some situations, slightly different.

\smallbreak

Under condition ($f1$), (A1),(A4), we have
\[
|f(s)|\leq \hat{C}+(1-2\eps)\lambda_1(\Omega)|s| \ \  \text{ for every $s\in \R$}.
\]
for some $0<\eps<1/2$ and $\hat{C}>0$. In this case 
\begin{align}
|F(s)|\leq \hat{C}|s|+\frac{(1-2\eps)}{2}\lambda_1(\Omega) s^2 \leq C+\frac{(1-\eps)}{2}\lambda_1(\Omega) s^2 \quad \text{ for every $s\in \R$}.  \label{eq:bound_on_F}
\end{align}
In particular, the functional $I$ defined in \eqref{eq:functional_I} is always finite and $I(u)=\frac{1}{2}\int_\Omega |\nabla u|^2-\int_\Omega F(u)$ for every $u\in H^1_0(\Omega)$. Thus $I$ is of class $C^1$.

\medbreak

Assume now (and for the rest of this section) that $f$ satisfies ($f2$). Then $I(u)$ might not be finite for some $u\in H^1_0(\Omega)$, and in the proofs of some results we will rely on the following truncation of the nonlinearity $f$:
\begin{equation}\label{eq:tilde f}
\tilde f(t):=\begin{cases}
f(t) & -s_f \leq t\leq s_f,\\
0 & t \not\in [-s_f,s_f].
\end{cases}
\end{equation}
We remark that $f(t)\leq \tilde f(t)$ for $t\geq 0$, so that 
\[
F(t)\leq \tilde F(t) \quad \text{ for every $t\in \R$.}
\] 
Observe also that $\tilde f$ satisfies (A1), (A3), (A4), but not (A2). We define the \emph{truncated} functional $\tilde I: H^1_0(\Omega)\to \R$ as
\[
\tilde I(u):=\frac{1}{2}\int_\Omega |\nabla u|^2-\int_\Omega \tilde F(u)
\]
which satisfies:
\begin{equation*}
\tilde I(u)\leq I(u) \qquad \forall u\in H^1_0(\Omega).
\end{equation*}
Moreover, since $\tilde f$ is bounded, then $|F(s)|\leq C|s|$ for some $C>0$, and in particular estimate \eqref{eq:bound_on_F} holds true. Observe that $\tilde I$ is a $C^1$ functional (even though $I$ might not be).

\begin{lemma}
Let $f$ be a function satisfying (A1)-(A2) and ($f2$). Let $u\in H^1_0(\Omega)$ be any bounded solution of either $-\Delta u=f(u)$ or $-\Delta u=\tilde f(u)$, where $\tilde f$ is defined as in  \eqref{eq:tilde f}. Then 
\[
-s_f\leq u(x)\leq s_f \quad \text{ for every $x\in \Omega$.}
\]
\end{lemma}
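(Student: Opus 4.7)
The plan is a standard test-function argument exploiting the sign/vanishing of the nonlinearity above the level $s_f$ (and by oddness, below $-s_f$). The key point is to feed the bounded solution a truncation of itself and then run an energy-type computation.

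First, I note that since $u\in H^1_0(\Omega)\cap L^\infty(\Omega)$ and $s_f>0$, the truncation $\varphi:=(u-s_f)^+$ belongs to $H^1_0(\Omega)$ (it is the composition of $u$ with a $1$-Lipschitz function vanishing on $(-\infty,s_f]$, so its trace on $\partial\Omega$ vanishes). Moreover $\varphi$ is bounded, hence admissible as a test function in the weak formulation
\[
\int_\Omega \nabla u\cdot \nabla \varphi \,dx = \int_\Omega g(u)\,\varphi\, dx,
\]
where $g$ denotes either $f$ or $\tilde f$.

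Next, I would handle the right-hand side using $(f2)$ for the equation with $f$ and the very definition of $\tilde f$ for the truncated equation. On the set $\{u>s_f\}$, $(f2)$ gives $f(u)<0$, while $\tilde f(u)=0$ by \eqref{eq:tilde f}; in both cases $g(u)\,\varphi \leq 0$ a.e.\ in $\Omega$. On the other hand, the left-hand side equals
\[
\int_\Omega \nabla u \cdot \nabla \varphi\,dx = \int_{\{u>s_f\}} |\nabla u|^2\,dx = \int_\Omega |\nabla \varphi|^2\,dx.
\]
Combining these two facts yields $\int_\Omega |\nabla \varphi|^2\,dx \leq 0$, so $\varphi\equiv 0$ by the Poincaré inequality in $H^1_0(\Omega)$, i.e.\ $u\leq s_f$ a.e.\ in $\Omega$. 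For the lower bound, I would invoke the oddness of $f$ (and of $\tilde f$) to observe that $-u$ solves the same equation, hence the preceding argument applied to $-u$ gives $-u\leq s_f$, that is, $u\geq -s_f$ a.e.\ in $\Omega$.

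I do not expect any real obstacle here: the argument is essentially one line once the test function is chosen. The only points that deserve a brief justification in the write-up are that $(u-s_f)^+\in H^1_0(\Omega)$ (which follows from $u\in H^1_0(\Omega)$ together with $s_f>0$) and the identification of the sign/vanishing of $g(u)$ on the set $\{u>s_f\}$, which uses assumption $(f2)$ for $g=f$ and the definition \eqref{eq:tilde f} for $g=\tilde f$.
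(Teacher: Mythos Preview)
Your argument is correct and matches the paper's proof: both test the equation with $(u-s_f)^+$ to obtain $\int_\Omega |\nabla (u-s_f)^+|^2 \le 0$. For the lower bound the paper tests directly with $(u+s_f)^-$, whereas you apply the upper bound to $-u$ via oddness of $g$; these are equivalent one-line variations of the same idea.
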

\begin{proof}
Let $u$ be a solution of $-\Delta u=g(u)$, where $g$ is either equal to $f$ or to $\tilde f$. Testing the equation with $(u-s_f)^+$ we obtain
\[
\int_\Omega |\nabla (u-s_f)^+|^2=\int_\Omega g(u)  (u-s_f)^+\leq 0
\]
and so $(u-s_f)^+\equiv 0$, that is, $u\leq s_f$. Testing with $(u+s_f)^-$, we obtain in a similar fashion that $-s_f\leq u$.
\end{proof}

As an immediate consequence we have the following.

\begin{corollary}\label{coro:consequences}
Let $f$ satisfy (A1)-(A4) and ($f2$).
\begin{itemize}
\item[(i)] Denoting by $w$ the unique positive bounded solution of \eqref{eq:mainproblem}, we have $\|w\|_\infty\leq s_f$;
\item[(ii)] Critical points of $\tilde I$ are solutions of \eqref{eq:mainproblem}.
\end{itemize}
\end{corollary}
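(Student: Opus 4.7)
The plan is to derive both (i) and (ii) as essentially immediate consequences of the preceding lemma; no further variational analysis is needed.

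For part (i), recall that $w$ is the unique positive bounded solution of \eqref{eq:mainproblem}, in particular a bounded solution of $-\Delta u=f(u)$. Applying the preceding lemma with $g=f$ yields $-s_f\leq w(x)\leq s_f$ for every $x\in \Omega$, and since $w>0$ this gives $\|w\|_\infty\leq s_f$.

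For part (ii), let $u\in H^1_0(\Omega)$ be a critical point of $\tilde I$. Then $u$ is a weak solution of $-\Delta u=\tilde f(u)$, and since $\tilde f$ is bounded and continuous, the right-hand side lies in $L^\infty(\Omega)$; standard elliptic regularity (e.g.\ \cite[Theorem 8.15]{GilbargTrudinger} or $L^p$ bootstrap) gives $u\in L^\infty(\Omega)$. We are therefore in position to invoke the preceding lemma with $g=\tilde f$, obtaining $|u(x)|\leq s_f$ for every $x\in\Omega$. By the very definition \eqref{eq:tilde f}, $\tilde f$ and $f$ coincide on $[-s_f,s_f]$, so that $\tilde f(u)=f(u)$ a.e.\ in $\Omega$, and thus $u$ is a weak (bounded) solution of \eqref{eq:mainproblem}.

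The only step that is not entirely formal is the $L^\infty$-bound on a critical point of $\tilde I$, but this is immediate from the boundedness of $\tilde f$ and classical regularity; no maximum-principle-based sub/supersolution argument is required here, since the truncation has already been engineered so that the two-sided bound by $s_f$ follows directly from testing the equation with $(u-s_f)^+$ and $(u+s_f)^-$ as in the lemma.
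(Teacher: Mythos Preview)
Your proposal is correct and matches the paper's intent: the paper states the corollary as ``an immediate consequence'' of the preceding lemma without further proof, and you have simply spelled out this immediate deduction. The only extra detail you supply is the observation that a critical point of $\tilde I$ is automatically bounded (since $\tilde f$ is bounded, $-\Delta u\in L^\infty$ and standard regularity applies), which is needed to place $u$ within the scope of the lemma's hypothesis; this is routine and exactly what the paper implicitly assumes.
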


Another important consequence is that the absolute minimizers of $I$ and $\tilde I$ coincide.

\begin{corollary}\label{coro:tilde_m}
Let $f$ satisfy (A1)-(A4) and ($f2$), and define 
\[
\tilde m:=\inf\{ \tilde I(v):\ v\in H^1_0(\Omega)\}.
\]
Then $m=\tilde m$. In particular, $\pm w$ are the unique global minimizers of $\tilde I$.
\end{corollary}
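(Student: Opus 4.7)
The plan is to exploit the inequality $\tilde I \le I$ together with the a priori $L^\infty$ bound $|u|\le s_f$ for solutions of the truncated equation, which forces every minimizer of $\tilde I$ to live inside the region $\{|t|\le s_f\}$ where $f$ and $\tilde f$ agree.

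First, since $\tilde F \ge F$ pointwise, one immediately has $\tilde I \le I$ on $H^1_0(\Omega)$, hence $\tilde m \le m$. Next, I would argue that $\tilde m$ is attained. This is actually easier than the analogous step for $I$ in Lemma \ref{lemma:m_achieved}: $\tilde f$ is globally bounded, so $\tilde I$ is of class $C^1$, coercive (the argument using (A4) still applies since $\tilde F \le F+C$ in the relevant range, or more simply because $|\tilde F(s)|\le C|s|$) and weakly lower semicontinuous, giving the existence of a minimizer $\tilde u \in H^1_0(\Omega)$. Moreover, testing $I(\eps \varphi_1)$ as in paragraph 1) of the proof of Lemma \ref{lemma:m_achieved} — and noting that for $\eps$ small we have $\eps\varphi_1 \in [-s_f,s_f]$ so that $\tilde F(\eps\varphi_1)=F(\eps\varphi_1)$ — shows $\tilde m<0$, hence $\tilde u\not\equiv 0$.

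Now $\tilde u$ is a critical point of the $C^1$ functional $\tilde I$, so it is a bounded weak solution of $-\Delta \tilde u = \tilde f(\tilde u)$. By the lemma preceding the corollary we have $-s_f \le \tilde u(x) \le s_f$ for every $x\in\Omega$. On the interval $[-s_f,s_f]$, however, $\tilde f \equiv f$ and hence $\tilde F \equiv F$. This yields pointwise $\tilde F(\tilde u)=F(\tilde u)$ on $\Omega$, so that
\[
\tilde m = \tilde I(\tilde u) = \frac{1}{2}\int_\Omega |\nabla \tilde u|^2 - \int_\Omega F(\tilde u) = I(\tilde u) \ge m.
\]
Combined with $\tilde m \le m$, this gives the equality $\tilde m = m$.

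Finally, the identity $I(\tilde u)=m$ shows that every minimizer of $\tilde I$ is also a minimizer of $I$. Applying Theorem \ref{th:existencemin}, $\tilde u$ is a bounded weak solution of \eqref{eq:mainproblem} satisfying $|\tilde u|>0$ in $\Omega$; by continuity and connectedness of $\Omega$, $\tilde u$ has constant sign, and Proposition \ref{prop:uniqueness} (applied to $\tilde u$ or $-\tilde u$) forces $\tilde u\in\{w,-w\}$. Conversely, $I(\pm w)=m=\tilde m$ and $\|\pm w\|_\infty \le s_f$ yield $\tilde I(\pm w)=I(\pm w)=\tilde m$, so $\pm w$ are indeed minimizers of $\tilde I$, and they are the only ones. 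The only slightly delicate point in all of this is the existence of $\tilde u$ — once that is in hand, the identification of the minimizers is immediate from the pointwise bound, which is the real content of the corollary.
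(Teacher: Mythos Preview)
Your proof is correct and follows the same circle of ideas as the paper, though you are considerably more careful. The paper's two-line argument simply records that $\|w\|_\infty\le s_f$ gives $m=I(w)=\tilde I(w)\ge\tilde m$, and appeals to the pointwise comparison between $I$ and $\tilde I$ for the other direction (the inequality there is actually mistyped in the paper). You instead obtain the inequality $\tilde m\ge m$ by first producing a minimizer $\tilde u$ of the $C^1$, coercive functional $\tilde I$, invoking the preceding lemma to get $|\tilde u|\le s_f$, and then reading off $\tilde m=\tilde I(\tilde u)=I(\tilde u)\ge m$; this also yields immediately that every minimizer of $\tilde I$ minimizes $I$ and hence lies in $\{\pm w\}$, which is exactly the ``in particular'' clause. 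So your route is the paper's route made rigorous: same key inputs (Corollary~\ref{coro:consequences}(i) and the $L^\infty$ bound for solutions of the truncated problem), with the existence of the $\tilde I$-minimizer supplied explicitly.
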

\begin{proof}
Since $I\leq \tilde I$ then $m\leq \tilde m$; on the other hand, since $\|w\|_{\infty}\leq s_f$, then $m=I(w)=\tilde I(w)\geq \tilde m$.
\end{proof}

\section{Sign-Changing solutions} \label{sec:nodalsolutions}

In order to prove the existence of sign-changing solutions, in this section we replace (A3) with the stronger assumption
\begin{itemize}
\item[(A3')] $\lim_{s\to 0^+} \frac{f(s)}{s}>\lambda_2(\Omega)$\,.
\end{itemize}
Hence, in what follows, we assume that $f$ satisfies (A1)-(A2)-(A3')-(A4). Recall that either ($f1$) or ($f2$) can happen, and that this will not affect the statements but simply the proofs.

We denote by $\varphi_1$ the first positive eigenfunction of the Dirichlet Laplacian, with $\int_\Omega \varphi_1^2=1$. Recall the following two characterizations of the second eigenvalue:
\begin{equation}\label{eq:characterizations_lambda2}
\lambda_2(\Omega)  = \inf_{\omega\subset \Omega} \max \{ \lambda_1(\omega),\lambda_1(\Omega\setminus\overline \omega ) \}	=\inf_{c\in \Lambda} \sup_{t\in [0,1]} \int_\Omega |\nabla c (t)|^2,
\end{equation}
where
\[
\Lambda=\left\{c\in C([0,1],H^1_0(\Omega)),\ c(0)=-\varphi_1,\ c(1)=\varphi_1,\ \|c(t)\|_{L^2}=1\ \forall t\right\}.
\]
(for a proof of these two characterizations, see \cite{CTV05} and \cite{CuestaFigueiredoGossez99} respectively).

\begin{remark}
It can be seen directly from the first characterization of $\lambda_2(\Omega)$ in \eqref{eq:characterizations_lambda2} that the problem $-\Delta u=\lambda(u-|u|^{p-1}u)$ does not have a sign-changing solution for $0\leq \lambda\leq \lambda_2(\Omega)$, and so condition (A3') is, in a sense, sharp.
\end{remark}

Recall we are denoting by $w$ be unique positive solution of \eqref{eq:mainproblem}, and that $\pm w$ are the unique global minimizers of $I$. Define the mountain-pass level
\begin{equation}\label{eq:cmp}
c_{mp}=\inf_{\gamma\in \Gamma} \sup_{t\in [0,1]} I(\gamma(t))
\end{equation}
where
\[
\Gamma=\{\gamma\in C([0,1],H^1_0(\Omega)):\ \gamma(0)=-w,\ \gamma(1)=w \}.
\]

The first main result of this section is the following.

\begin{theorem}\label{thm:MP}
Assume that $f$ satisfies (A1)-(A2)-(A3')-(A4).
There exists $u\in H^1_0(\Omega)$, a bounded solution of \eqref{eq:mainproblem}, such that $I(u)=c_{mp}$. Moreover, $m<c_{mp}<0$, and any $u\in H^1_0(\Omega)\cap L^\infty(\Omega)$ achieving $c_{mp}$ is a sign-changing solution of \eqref{eq:mainproblem}.
\end{theorem}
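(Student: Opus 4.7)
My plan is to apply the classical mountain pass theorem to a $C^1$ auxiliary functional $J$ between the two global minima $\pm w$ of $I$ provided by Theorem~\ref{th:existencemin} and Proposition~\ref{prop:uniqueness}. In case $(f1)$ the functional $I$ is itself $C^1$ on $H^1_0(\Omega)$ by \eqref{eq:bound_on_F}, so I take $J:=I$. In case $(f2)$ I instead take $J:=\tilde I$, which by Corollaries~\ref{coro:consequences} and \ref{coro:tilde_m} is $C^1$, coercive, satisfies $\tilde I\le I$ with equality on $\{\|u\|_\infty\le s_f\}$, and has the same global minimizers $\pm w$ at level $m$. In both cases $J$ is coercive, bounded below and, by the compact embedding $H^1_0\hookrightarrow L^2$, satisfies Palais--Smale.

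For the upper bound $c_{mp}<0$ I construct an explicit path on which $J$ stays strictly negative, combining a short sign-flipping piece near the origin with two descent pieces reaching $\pm w$. Using (A3'), fix $\delta>0$ with $(1+\delta)\lambda_2(\Omega)<\lim_{s\to 0^+}f(s)/s$, so that $F(s)\ge\tfrac{(1+\delta)\lambda_2(\Omega)}{2}s^2$ for $|s|\le\bar s$. Letting $\varphi_2$ be an $L^2$-normalized second Dirichlet eigenfunction, for small $\varepsilon>0$ the path
\[
\eta_\varepsilon(t):=\varepsilon\bigl(-\cos(\pi t)\varphi_1+\sin(\pi t)\varphi_2\bigr),\qquad t\in[0,1],
\]
lies in the $L^\infty$-bounded subspace $\mathrm{span}\{\varphi_1,\varphi_2\}$, has $\|\eta_\varepsilon(t)\|_{L^2}^2=\varepsilon^2$ and $\int_\Omega|\nabla\eta_\varepsilon(t)|^2\le\lambda_2(\Omega)\varepsilon^2$, whence a direct computation gives $J(\eta_\varepsilon(t))\le-\tfrac{\delta\lambda_2(\Omega)}{2}\varepsilon^2<0$ uniformly in $t$. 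To connect $\eta_\varepsilon(1)=\varepsilon\varphi_1$ to $w$ (and symmetrically $-\varepsilon\varphi_1$ to $-w$) keeping $J<0$, I use the negative gradient flow of $J$: since $\tilde f(0)=0$ and $\tilde f\ge 0$ on $[0,s_f]$, the parabolic comparison principle keeps the orbit nonnegative, $J$ decreases along it, and Palais--Smale combined with the uniqueness of $w$ among positive critical points of $J$ (Proposition~\ref{prop:uniqueness} and Corollary~\ref{coro:consequences}(ii)) forces convergence of the orbit to $w$ in $H^1_0$. Concatenating the three pieces yields $\gamma\in\Gamma$ with $\sup_t J(\gamma(t))<0$. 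In case $(f2)$, the identity $c_{mp}=c_{mp}^J$ needed to transfer this to $I$ follows by truncation: for any $\gamma\in\Gamma$, the path $T\gamma(t):=\max\{-s_f,\min\{\gamma(t),s_f\}\}$ is again in $\Gamma$, $|\nabla T\gamma|\le|\nabla\gamma|$ a.e., $\tilde F(T\gamma)=\tilde F(\gamma)$ a.e.\ (as $\tilde F$ is constant outside $[-s_f,s_f]$), and $I(T\gamma)=\tilde I(T\gamma)\le\tilde I(\gamma)$, which yields $c_{mp}\le c_{mp}^J$; the reverse inequality is immediate from $\tilde I\le I$.

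The strict lower bound $c_{mp}>m$ follows by separation. Any sequence $u_n$ with $J(u_n)\to m$ is bounded in $H^1_0$ by coercivity, and combining weak lower semicontinuity of the Dirichlet integral with strong $L^2$-convergence (via compact embedding) and the uniqueness of $\pm w$ as minimizers yields, along a subsequence, strong $H^1_0$-convergence $u_n\to\pm w$. Hence for some $\delta_0>0$ the set $\{J\le m+\delta_0\}$ is contained in $B_\rho(w)\cup B_\rho(-w)$ with $2\rho<\|w-(-w)\|_{H^1_0}$, so it is disconnected and no $\gamma\in\Gamma$ stays below $m+\delta_0$. The classical mountain pass theorem then produces a critical point $u$ of $J$ with $J(u)=c_{mp}^J$; elliptic regularity (together with Corollary~\ref{coro:consequences}(ii) and $\|u\|_\infty\le s_f$ in case $(f2)$) shows $u$ is a bounded solution of \eqref{eq:mainproblem} with $I(u)=J(u)=c_{mp}$. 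Finally, if any bounded $u$ with $I(u)=c_{mp}$ did not change sign, the strong maximum principle combined with $f(s)>0$ for small $s>0$ (from (A3')) and Proposition~\ref{prop:uniqueness} would force $u\in\{0,\pm w\}$, hence $I(u)\in\{0,m\}$, contradicting $I(u)\in(m,0)$.

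The main obstacle is the upper bound $c_{mp}<0$: linking the local small-amplitude estimate in $\mathrm{span}\{\varphi_1,\varphi_2\}$ to a global descent reaching $\pm w$ without crossing the trivial barrier $I(0)=0$ requires exploiting simultaneously the gradient structure of $J$, the positivity preservation of its flow, and the uniqueness of the one-signed solution from Proposition~\ref{prop:uniqueness}.
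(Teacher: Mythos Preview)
Your overall architecture coincides with the paper's: split into cases $(f1)$/$(f2)$, use the truncated functional $\tilde I$ in case $(f2)$ (with the truncation map $T$ to show $c_{mp}=\tilde c_{mp}$, exactly as in Lemma~\ref{lemma:tilde c}), establish the separation $c_{mp}>m$ via sequential compactness of minimizing sequences (this is Lemma~\ref{lemma:MP_min}), and deduce that any bounded critical point at level $c_{mp}\in(m,0)$ must change sign. The one substantive divergence is in how you obtain $c_{mp}<0$, specifically the piece of the path joining $\varepsilon\varphi_1$ to $w$.

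The paper avoids any dynamical argument here by using the explicit interpolation $\gamma(t)=\sqrt{(1-t)w^2+t(\varepsilon\varphi_1)^2}$: hidden convexity (convexity of $t\mapsto\int_\Omega|\nabla\gamma(t)|^2$ from \cite{BFSST} together with concavity of $t\mapsto F(\sqrt t)$, which is a consequence of (A2) alone) yields $I(\gamma(t))\le(1-t)I(w)+tI(\varepsilon\varphi_1)<0$ directly. Your gradient-flow route is conceptually reasonable but carries a genuine technical gap under the stated hypotheses. The function $f$ is only assumed continuous, and in the prototype case $f(s)=|s|^{p-1}s$ with $0<p<1$ it is \emph{not} locally Lipschitz at $0$. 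Consequently neither the $H^1_0$-gradient flow (an ODE in Hilbert space) nor the semilinear heat flow is a priori well-posed, and the parabolic comparison principle you invoke to keep the orbit nonnegative requires precisely the one-sided Lipschitz condition that is missing; since every function in the orbit vanishes on $\partial\Omega$, you cannot stay away from the non-Lipschitz point. The order-preservation machinery the paper develops later (Lemma~\ref{sec:nodal-solut-morse}) would rescue this step, but that lemma is proved under the additional hypothesis $f\in C^1$, which Theorem~\ref{thm:MP} does not assume. To salvage your approach in full generality you would need a regularization argument or a comparison principle valid for merely continuous $f$; the paper's convex-interpolation path is exactly the elementary device that makes all of this unnecessary.
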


In the rest of the section, we assume (A1)-(A2)-(A3')-(A4) and therefore omit the reference to these assumptions. The proof of Theorem \ref{thm:MP} will be slightly different in case $f$ satisfies ($f2$); in such case we will rely on the truncation $\tilde I$ introduced in Section \ref{subsec:truncation}, from which we borrow all notations. In such case, we need the following alternative characterization of $c_{mp}$.

\begin{lemma}\label{lemma:tilde c}
Assume that $f$ satisfies ($f2$). Define
\[
\tilde c_{mp}:=\inf_{\gamma\in \Gamma} \sup_{t\in [0,1]} \tilde I(\gamma(t))\,.
\]
Then $c_{mp}=\tilde c_{mp}$.
\end{lemma}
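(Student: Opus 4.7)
The plan is to prove the two inequalities $\tilde c_{mp}\leq c_{mp}$ and $c_{mp}\leq \tilde c_{mp}$ separately. The first one is immediate: since $\tilde F\geq F$ pointwise (as noted in Section~\ref{subsec:truncation}), we have $\tilde I(u)\leq I(u)$ for every $u\in H^1_0(\Omega)$, and consequently for every $\gamma\in \Gamma$, $\sup_{t}\tilde I(\gamma(t))\leq \sup_t I(\gamma(t))$, whence $\tilde c_{mp}\leq c_{mp}$.

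For the reverse inequality, the idea is to use a truncation operator on the paths. Define $T:H^1_0(\Omega)\to H^1_0(\Omega)$ by
\[
T(u)(x):=\max\{-s_f,\min\{u(x),s_f\}\}.
\]
This operator is well known to be continuous (in fact Lipschitz with constant $1$) from $H^1_0(\Omega)$ into itself, and Stampacchia's truncation lemma gives $|\nabla T(u)|=|\nabla u|\chi_{\{|u|\leq s_f\}}$, so in particular $\int_\Omega |\nabla T(u)|^2\leq \int_\Omega |\nabla u|^2$. A direct computation shows $\tilde F(t)=F(T(t))$ for every $t\in\R$: indeed, $\tilde F(t)=F(t)=F(T(t))$ when $|t|\leq s_f$, while for $|t|>s_f$ one has $\tilde F(t)=F(\mathrm{sign}(t)\,s_f)=F(T(t))$ since $\tilde f\equiv 0$ outside $[-s_f,s_f]$. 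Combining these two facts yields the crucial pointwise estimate
\[
I(T(u))=\tfrac{1}{2}\int_\Omega |\nabla T(u)|^2-\int_\Omega F(T(u))\leq \tfrac{1}{2}\int_\Omega |\nabla u|^2-\int_\Omega \tilde F(u)=\tilde I(u).
\]

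Given now $\gamma\in \Gamma$, set $\tilde\gamma(t):=T(\gamma(t))$. By continuity of $T$ in $H^1_0(\Omega)$, $\tilde\gamma\in C([0,1],H^1_0(\Omega))$; by Corollary~\ref{coro:consequences}(i), $\|w\|_\infty\leq s_f$, so $T(\pm w)=\pm w$ and hence $\tilde\gamma\in \Gamma$. The displayed inequality above then gives
\[
\sup_{t\in[0,1]} I(\tilde\gamma(t))\leq \sup_{t\in[0,1]} \tilde I(\gamma(t)),
\]
and taking the infimum over $\gamma\in\Gamma$ yields $c_{mp}\leq \tilde c_{mp}$.

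The only delicate point is really the identity $\tilde F=F\circ T$, which is purely elementary once one notices that $\tilde f$ vanishes outside $[-s_f,s_f]$; everything else is either the standard truncation estimate in $H^1_0$ or the uniform bound $\|w\|_\infty\leq s_f$ already available from the earlier section. I do not foresee a real obstacle in implementing this argument.
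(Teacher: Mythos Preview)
Your proof is correct and follows the same truncation strategy as the paper: both introduce the cut-off operator $T(u)=\max\{-s_f,\min\{u,s_f\}\}$, use $T(\pm w)=\pm w$ (from Corollary~\ref{coro:consequences}(i)) to see that $T\circ\gamma\in\Gamma$, and combine the gradient estimate $\int|\nabla T(u)|^2\le\int|\nabla u|^2$ with a comparison of $F$ and $\tilde F$ on truncated functions. Your organization via the identity $\tilde F=F\circ T$, yielding $I(T(u))\le \tilde I(u)$ in one stroke, is slightly more direct than the paper's route through $I(T(u))\le I(u)$ together with $I(T(u))=\tilde I(T(u))$, but the content is the same.
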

\begin{proof}
Since $\tilde I\leq I$, it is clear that $\tilde c_{mp}\leq c_{mp}$.

As for the other inequality, let us introduce the transformation $T:H^1_0(\Omega)\to H^1_0(\Omega)$ by
\[
T(u):=\max\{-s_f,\min\{u,s_f\}\}=\begin{cases} s_f & \text{ if } u(x)\geq s_f\\   u(x) &\text{ if } -s_f\leq u(x)\leq s_f \\ -s_f & \text{ if } u(x)\leq -s_f\end{cases}
\]
From the definitions, we have directly that $\tilde I(T(u))=I(T(u))$. 

Moreover, $F(u)\leq F(T(u))$ for every $u\in H^1_0(\Omega)$, since:
\begin{itemize}
\item[] for $u(x)\in [-s_f,s_f]$, $T(u)(x)=u(x)$ and $F(u) = F(T(u))$;
\item[] for $u(x)\geq s_f$, $F(u)\leq F(s_f)=F(T(u))$, since $F'=f<0$ in $(s_f,\infty)$, thus decreasing;
\item[] for $u(x)\leq -s_f$ we have $F(u)\leq F(T(u))$ by the previous paragraph and since $F$ is even symmetric.
\end{itemize}
From this we have
\[
I(T(u))=\frac{1}{2} \int_\Omega|\nabla T(u)|^2 - \int_\Omega F(T(u))  \leq \frac{1}{2} \int_\Omega |\nabla u|^2 - \int_\Omega F(u)=I(u). 
\]
Moreover, observe that $T(\pm w)=\pm w$ by Lemma \ref{coro:consequences}-(i). 

In conclusion, given a path $\gamma\in \Gamma$, we have $T\circ \gamma \in \Gamma$, and
\[
\sup_{t\in [0,1]} I(\gamma(t)) \geq \sup_{t\in [0,1]} I(T\circ \gamma(t)) = \sup_{t\in [0,1]} \tilde I(T\circ \gamma(t)) \geq \tilde c_{mp};
\]
whence $c_{mp}\geq \tilde c_{mp}$
\end{proof}

\begin{lemma}\label{lemma:MP_min}
Let $0<\eps<2\|w\|_{H^1_0}$. Then
\begin{itemize}
\item[-] If ($f1$) holds, then $\inf\{  I(u):\ \|u-w\|_{H^1_0}=\eps\}>m.$
\item[-] If ($f2$) holds, then $\inf\{  \tilde I(u):\ \|u-w\|_{H^1_0}=\eps\}>\tilde m.$
\end{itemize}
\end{lemma}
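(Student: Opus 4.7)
The plan is to argue by contradiction, exploiting that the only global minimizers of the relevant functional are $\pm w$, neither of which lies on the sphere $S_\eps:=\{u\in H^1_0(\Omega):\ \|u-w\|_{H^1_0}=\eps\}$ when $0<\eps<2\|w\|_{H^1_0}$. Denote by $J$ either $I$ (in case ($f1$)) or $\tilde I$ (in case ($f2$)), and by $G$ the corresponding primitive ($F$ or $\tilde F$). In both situations $J$ is $C^1$, coercive on $H^1_0(\Omega)$, and its primitive satisfies the uniform bound \eqref{eq:bound_on_F}; moreover, by Corollary \ref{coro:tilde_m} in the second case, $\tilde m=m$ and $\pm w$ are the unique global minimizers of $\tilde I$.

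Take a minimizing sequence $(u_n)\subset S_\eps$ for $J$. Coercivity yields boundedness in $H^1_0(\Omega)$, so up to a subsequence $u_n\rightharpoonup u$ weakly in $H^1_0(\Omega)$, $u_n\to u$ strongly in $L^2(\Omega)$, and $|u_n|\leq h$ a.e.\ for some $h\in L^2(\Omega)$. Combining \eqref{eq:bound_on_F} with Lebesgue's dominated convergence theorem gives $\int_\Omega G(u_n)\to \int_\Omega G(u)$. Suppose by contradiction that $\inf_{S_\eps} J = m$. By weak lower semicontinuity of $\|\cdot\|_{H^1_0}$ together with the convergence of the integral, $J(u)\leq \liminf_n J(u_n)=m$, so $u$ is a global minimizer of $J$; Theorem \ref{th:existencemin}, Proposition \ref{prop:uniqueness} and Corollary \ref{coro:tilde_m} then force $u\in\{w,-w\}$.

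The crux is upgrading this weak convergence to strong convergence. From $J(u_n)\to J(u)=m$ and $\int_\Omega G(u_n)\to \int_\Omega G(u)$ one reads off $\|u_n\|_{H^1_0}\to \|u\|_{H^1_0}$, which combined with weak convergence in the Hilbert space $H^1_0(\Omega)$ yields $u_n\to u$ strongly. Consequently
\[
\eps=\lim_{n\to\infty}\|u_n-w\|_{H^1_0}=\|u-w\|_{H^1_0}\in\{0,\ 2\|w\|_{H^1_0}\},
\]
contradicting the assumption $0<\eps<2\|w\|_{H^1_0}$.

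The main technical point is thus the passage from weak to strong convergence along the minimizing sequence, which is exactly what the growth control \eqref{eq:bound_on_F} enables (it ensures both that $\int_\Omega G(u_n)\to \int_\Omega G(u)$ and that this information can be combined with convergence of $J(u_n)$ to obtain norm convergence). All other ingredients — coercivity, weak lower semicontinuity, uniqueness of the minimizers, and the validity of Corollary \ref{coro:tilde_m} in the ($f2$) case — have already been established in the previous sections, so the proof reduces to assembling these tools along the scheme sketched above.
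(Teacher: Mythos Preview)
Your argument is correct and follows essentially the same route as the paper's proof: a contradiction via a minimizing sequence on the sphere $S_\eps$, using \eqref{eq:bound_on_F} and dominated convergence to pass the $F$-term to the limit, identifying the weak limit as a global minimizer (hence $\pm w$), and then upgrading to strong convergence from norm convergence to reach a contradiction. The only cosmetic difference is that the paper rules out the case $u=-w$ earlier (via weak lower semicontinuity of the norm and $\eps<2\|w\|_{H^1_0}$), whereas you keep both alternatives and exclude them simultaneously at the end.
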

\begin{proof}
We follow \cite{CuestaFigueiredoGossez99}.  
\smallbreak

Let us first consider the case where $f$ satisfies ($f1$). Suppose the conclusion does not hold. Then there exists $\{u_n\}\subset H^1_0(\Omega)$ such that
\[
 I(u_n)\to m,\qquad \|u_n-w\|_{H^1_0}=\eps.
\]
Thus, up to a subsequence, $u_n\rightharpoonup u$ in $H^1_0(\Omega)$, and in particular $\int_\Omega |\nabla u|^2\leq \liminf \int_\Omega |\nabla u_n|^2$. On the other hand, since \eqref{eq:bound_on_F} holds, then by dominated convergence and the compact embedding $H^1_0(\Omega)\hookrightarrow L^2(\Omega)$, we deduce that $\int_\Omega F(u_n)\to \int_\Omega F(u)$. Therefore,
\[
m \leq  I(u)\leq \lim_{n \to +\infty}  I (u_n)=m,
\]
and $u=w$ because $\eps<\text{dist}(w,-w)$ and $\pm w$ are the unique global minimizers of $I$. Combining now
\[
I(u_n)\to  I(w)\quad \text{ with } \quad \int_\Omega F(u_n)\to \int_\Omega F(w),
\]
we deduce that $\int_\Omega |\nabla u_n|^2\to \int_\Omega |\nabla w|^2$. Since $u_n\rightharpoonup w$ weakly in $H^1_0(\Omega)$, then actually the convergence is strong in $H^1_0(\Omega)$, which contradicts the assumption $\|u_n-w\|_{H^1_0}=\eps>0$.

\smallbreak

The case where $f$ satisfies ($f2$) follows exactly in the same way, replacing $I$ and $F$ respectively by $\tilde I$ and $\tilde F$, and recalling that $m=\tilde m$ and that $\pm w$ are the unique global minimizers of both $I$ and $\tilde I$ (see Lemma \ref{coro:tilde_m}). 
\end{proof}

\begin{lemma}
We have $c_{mp}<0$.
\end{lemma}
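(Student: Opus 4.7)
The plan is to construct an explicit path $\gamma\in\Gamma$ along which $I$ stays strictly negative. The main tool is a convexity-in-$v:=u^2$ that follows from (A2): on the cone $\{v\ge 0:\sqrt v\in H^1_0(\Omega)\}$, the functional $v\mapsto I(\sqrt v)$ is convex. Indeed $\tfrac12\int_\Omega|\nabla\sqrt v|^2=\tfrac18\int_\Omega|\nabla v|^2/v$ is convex in $v$ because $(v,\xi)\mapsto|\xi|^2/(4v)$ is the perspective of the convex function $|\xi|^2/4$ on $(0,\infty)\times\R^N$, and $v\mapsto\int_\Omega F(\sqrt v)$ is concave because $G(v):=F(\sqrt v)$ satisfies $G'(v)=\tfrac12\cdot f(s)/s|_{s=\sqrt v}$, which is strictly decreasing in $v$ by (A2).

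Let $\bar s,\delta>0$ come from (A3'), giving $F(s)\ge\tfrac{(1+\delta)\lambda_2(\Omega)}{2}s^2$ for $|s|\le\bar s$, and let $\varphi_2$ be an $L^2$-normalized eigenfunction associated with $\lambda_2(\Omega)$. Fix $\varepsilon>0$ small enough that $\varepsilon(\|\varphi_1\|_\infty+\|\varphi_2\|_\infty)\le\bar s$ and $I(\varepsilon\varphi_1)<0$ (the latter being available via Remark~\ref{rem_energynegative} since (A3') implies (A3)). Setting $u_s:=\sqrt{(1-s)w^2+s\varepsilon^2\varphi_1^2}$ and $c_0(\tau):=\cos(\pi(1-\tau))\varphi_1+\sin(\pi(1-\tau))\varphi_2$, I define
\[
\gamma(t):=\begin{cases}-u_{3t}&\text{if }t\in[0,\tfrac13],\\ \varepsilon c_0(3t-1)&\text{if }t\in[\tfrac13,\tfrac23],\\ u_{3(1-t)}&\text{if }t\in[\tfrac23,1].\end{cases}
\]
The three pieces match at $t=\tfrac13,\tfrac23$ (with common values $-\varepsilon\varphi_1$, $\varepsilon\varphi_1$), and $\gamma(0)=-w$, $\gamma(1)=w$, so $\gamma\in\Gamma$. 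Using $I(-u)=I(u)$ and the above convexity, on the first (and, symmetrically, the third) piece,
\[
I(\gamma(t))=I(u_{3t})\le(1-3t)I(w)+3t\,I(\varepsilon\varphi_1)<0.
\]
On the middle piece, the $L^2$-orthonormality of $\varphi_1,\varphi_2$ together with $\int_\Omega\nabla\varphi_1\cdot\nabla\varphi_2=\lambda_1\int_\Omega\varphi_1\varphi_2=0$ gives $\int_\Omega|\nabla c_0(\tau)|^2=\cos^2(\pi(1-\tau))\lambda_1+\sin^2(\pi(1-\tau))\lambda_2\le\lambda_2$ and $\int_\Omega c_0(\tau)^2=1$; since $\|\varepsilon c_0(\tau)\|_\infty\le\bar s$, (A3') yields
\[
I(\varepsilon c_0(\tau))\le\frac{\varepsilon^2\lambda_2}{2}-\frac{(1+\delta)\lambda_2}{2}\varepsilon^2=-\frac{\delta\lambda_2}{2}\varepsilon^2<0.
\]
Hence $\sup_{t\in[0,1]}I(\gamma(t))<0$, and $c_{mp}<0$.

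The main technical subtlety will be showing that $u_s\in H^1_0(\Omega)$ and that $s\mapsto u_s$ is continuous into $H^1_0(\Omega)$. The crucial pointwise bound is
\[
|\nabla u_s|^2=\frac{|\nabla v_s|^2}{4v_s}\le(1-s)|\nabla w|^2+s\varepsilon^2|\nabla\varphi_1|^2,
\]
obtained by Cauchy-Schwarz applied to $\nabla v_s=2\bigl(\sqrt{1-s}\,w\cdot\sqrt{1-s}\,\nabla w+\sqrt s\,\varepsilon\varphi_1\cdot\sqrt s\,\varepsilon\,\nabla\varphi_1\bigr)$; this yields the uniform bound in $H^1_0$, and continuity follows by dominated convergence. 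Under hypothesis (f2), the whole path stays within $\{\|u\|_\infty\le s_f\}$ for $\varepsilon$ small (recall $\|w\|_\infty\le s_f$ by Corollary~\ref{coro:consequences}), so $I$ and $\tilde I$ agree pointwise along $\gamma$ and the argument is unaffected; moreover $\tilde f(s)/s$ is non-increasing on $(0,\infty)$, so the same convexity applies to $\tilde I(\sqrt\cdot)$ if one prefers to work with the truncation throughout, and then $c_{mp}=\tilde c_{mp}<0$ via Lemma~\ref{lemma:tilde c}.
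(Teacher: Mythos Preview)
Your proof is correct and follows the same three-piece strategy as the paper: a convex interpolation in $u^2$ between $\pm w$ and $\pm\varepsilon\varphi_1$ on the outer legs (exploiting that $t\mapsto F(\sqrt t)$ is concave and $t\mapsto\int|\nabla\sqrt{(1-t)w^2+t\varepsilon^2\varphi_1^2}|^2$ is convex), and a short path on the $L^2$-sphere joining $-\varepsilon\varphi_1$ to $\varepsilon\varphi_1$ along which $(A3')$ forces negativity.

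The one genuine difference is in the middle leg. The paper invokes the mountain-pass characterization of $\lambda_2(\Omega)$ from \cite{CuestaFigueiredoGossez99} to obtain an abstract path $c$ on the unit $L^2$-sphere with $\int|\nabla c(t)|^2\le\lambda_2(1+\delta/2)$, and then needs the extra (not entirely trivial) step of ensuring $\|\varepsilon c(t)\|_\infty\le\bar s$ uniformly in $t$. You bypass this entirely by taking the explicit path $c_0(\tau)=\cos(\pi(1-\tau))\varphi_1+\sin(\pi(1-\tau))\varphi_2$ through a second eigenfunction, which gives $\int|\nabla c_0|^2\le\lambda_2$ directly and makes the $L^\infty$ control transparent since $\|c_0(\tau)\|_\infty\le\|\varphi_1\|_\infty+\|\varphi_2\|_\infty$. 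This is a cleaner and more self-contained choice; the paper's version has the (minor) advantage of not appealing to a specific second eigenfunction, but at the cost of citing an external characterization. Your treatment of the $H^1_0$-continuity of $s\mapsto u_s$ via the Cauchy--Schwarz bound $|\nabla u_s|^2\le(1-s)|\nabla w|^2+s\varepsilon^2|\nabla\varphi_1|^2$ is also more explicit than the paper's reference to \cite{BFSST}.
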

\begin{proof}
1) Recalling Remark \ref{rem_energynegative}, we can choose $\varepsilon>0$ small such that $I(\varepsilon \varphi_1)<0$. Recall also that $I(u)=m<0$ by Lemma \ref{lemma:m_achieved}. Consider the path		

		\[
	\gamma:[0,1]\to H^1_0(\Omega),\qquad	\gamma(t)=\sqrt{(1-t) w^2+t(\eps \vphi_1)^2}
		\]
		which links $\gamma(0)=w>0$ to $\gamma(1)=\eps \vphi_1 >0$. Condition (A2) implies that $t\mapsto F(\sqrt{t})$ is strictly concave. On the other hand, it is proved in \cite[Lemma 3.9]{BFSST} that $t\mapsto \int_\Omega |\nabla \gamma(t)|^2$ is strictly convex. Then $t\mapsto I(\gamma(t))$ is stricly convex, and in particular
		\[
		I(\gamma(t))\leq (1-t)I(w)+tI(\eps\vphi_1)<0 \qquad \forall t\in [0,1].
		\]

	\smallbreak
		
\noindent 2) Assumption (A3') implies the existence of $\bar s,\delta>0$ such that $F(s)>\frac{1+\delta}{2}\lambda_2(\Omega)s^2$ for every $|s|\leq \bar s$.  From the second characterization of $\lambda_2(\Omega)$ presented in \eqref{eq:characterizations_lambda2}, we can take a continuous path $c$, joining $-\vphi_1$ to $\vphi_1$, such that
\[
\int_\Omega |\nabla c(t)|^2\leq \lambda_2(\Omega)(1 + \frac{\delta}{2}),\qquad \int_\Omega [c(t)]^2=1 \qquad \forall t \in [0,1].
\] 
By eventually choosing a smaller $\varepsilon>0$ from the start, we can assume that $\|\varepsilon c(t)\|_\infty \leq \bar s$ for every $t\in [0,1]$. Thus
\[
I(\varepsilon c(t))\leq \frac{\varepsilon^2}{2}\int_\Omega |\nabla c(t)|^2 - \int_\Omega F(\varepsilon c(t))\leq  -\frac{\delta}{4} \lambda_2(\Omega) \varepsilon^2<0.
\]
\smallbreak

\noindent 3) By considering the paths $-\gamma$, $c$, and $\gamma$ (in this order), we can join $-w\to -\varepsilon \varphi_1\to \varepsilon \varphi_1\to w$  with a continuous curve along which $I<0$. This implies the statement made in the lemma.
\end{proof}

\begin{proof}[Proof of Theorem \ref{thm:MP}]
Assume that $f$ satisfies ($f1$). Then $I$ is of class $C^1$ and from \ref{eq:bound_on_F} it is standard to check that $I$ satisfies the Palais-Smale condition. This combined with Lemma \ref{lemma:MP_min} allows to apply the Mountain Pass Theorem, and the $ c_{mp}$ is critical for $I$. Since $0> c_{mp}\geq \inf\{  I(w):\ \|u-w\|=\eps\}> m$ and $\pm w$ are the unique signed solutions of \eqref{eq:mainproblem}, then any critical point achieving $c_{mp}$ is necessarily sign-changing.

Assume now that $f$ satisfies ($f2$). Then the proof follows in the same way by replacing $I$ and $F$ by $\tilde I$ and $\tilde F$ respectively, and using Corollary \ref{coro:tilde_m},  Lemma \ref{lemma:tilde c} and Lemma \ref{lemma:MP_min}.
\end{proof}

We proved the existence of at least one sign-changing solution. Let us define the least energy nodal level as
\begin{equation}\label{eq:cnod}
c_{nod}=\inf\{I(u):\ u\in H^1_0(\Omega)\cap L^\infty(\Omega), u^\pm\not\equiv 0,\ -\Delta u=f(u)\}.
\end{equation}
Clearly
\[
-\infty<m< c_{nod}\leq c_{mp}<0.
\]
In the following we prove the existence of a least energy nodal solutions.

\medbreak

\medbreak

\begin{theorem}\label{thm:lensachieved}
Assume that $f$ satisfies (A1)-(A2)-(A3')-(A4). Then the level $c_{nod}$ is achieved. In particular, $m<c_{nod}$.
\end{theorem}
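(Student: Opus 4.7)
The plan is a direct compactness argument on a minimizing sequence, exploiting the uniform $L^\infty$ bound coming from Lemma \ref{lemma:uniformbounded} to pass to a $C^1$ limit, and then using the uniqueness of the positive bounded solution (Proposition \ref{prop:uniqueness}) together with strict inequalities between the energy levels to exclude degenerate limits.

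First I would pick a minimizing sequence $\{u_n\}\subset H^1_0(\Omega)\cap L^\infty(\Omega)$ of bounded nodal solutions of \eqref{eq:mainproblem} with $I(u_n)\to c_{nod}$. By Lemma \ref{lemma:uniformbounded}, $|u_n(x)|\leq w(x)$ for every $x\in\Omega$, so $\{u_n\}$ is uniformly bounded in $L^\infty(\Omega)$ and, by continuity of $f$, so is $\{f(u_n)\}$. Since $\Omega$ is $C^{1,1}$, standard $L^p$ elliptic regularity applied to $-\Delta u_n=f(u_n)$ yields a uniform bound on $\{u_n\}$ in $W^{2,p}(\Omega)$ for every $p<\infty$, and Morrey's embedding then gives a uniform bound in $C^{1,\alpha}(\overline\Omega)$ for some $\alpha\in(0,1)$. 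Up to a subsequence, $u_n\to u$ in $C^1(\overline\Omega)$; passing to the limit in the equation (using continuity of $f$) shows that $u$ is a bounded classical solution of \eqref{eq:mainproblem}, and uniform convergence together with continuity of $F$ on the bounded range of the $u_n$ gives $\int_\Omega F(u_n)\to \int_\Omega F(u)$ and $\int_\Omega |\nabla u_n|^2\to\int_\Omega |\nabla u|^2$, so $I(u_n)\to I(u)=c_{nod}$.

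The main obstacle is to make sure that the limit $u$ is still sign-changing: a priori, the minimizing sequence could collapse to $0$ or to $\pm w$. I would rule out these degenerate scenarios as follows. If $u\equiv 0$, then $I(u)=0$, contradicting $c_{nod}\leq c_{mp}<0$ from Theorem \ref{thm:MP}. If $u\not\equiv 0$ has constant sign, say $u\geq 0$, then the strong maximum principle argument used in step~4 of the proof of Theorem \ref{th:existencemin} forces $u>0$ in $\Omega$, and then Proposition \ref{prop:uniqueness} gives $u=w$, so $I(u)=m<c_{nod}$, a contradiction; the case $u\leq 0$ is symmetric by oddness of $f$. Hence $u^\pm\not\equiv 0$ and $u$ is the sought least energy nodal solution. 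The strict inequality $m<c_{nod}$ is a byproduct, since by Theorem \ref{th:existencemin} and Proposition \ref{prop:uniqueness} any minimizer of $I$ equals $\pm w$, hence cannot be nodal.
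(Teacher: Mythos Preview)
Your compactness argument is fine and matches the paper's: the uniform pointwise bound $|u_n|\le w$, elliptic regularity, $C^{1,\alpha}$ convergence, and passage to the limit in the equation and in $I$ are all carried out the same way.

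The gap is in how you exclude the degenerate limit $u=w$. You write ``$u=w$, so $I(u)=m<c_{nod}$, a contradiction'', and then at the end you claim that $m<c_{nod}$ is a byproduct of having found a nodal minimizer. This is circular: at this stage you only know $m\le c_{nod}\le c_{mp}<0$, and the possibility $c_{nod}=m$ is precisely the scenario in which $I(u_n)\to m$ along nodal solutions and $u_n\to w$ in $C^{1}$. Nothing in your argument rules this out; the fact that each $u_n$ is nodal and hence $I(u_n)>m$ does not prevent the infimum from being $m$.

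The paper closes this gap by a different, geometric argument that does not use any a priori strict inequality between $m$ and $c_{nod}$. Once one arrives at $u=w$, one invokes Hopf's lemma on the $C^{1,1}$ domain: since $\partial_\nu w<0$ on $\partial\Omega$ and $w>0$ in $\Omega$, there exist $\varepsilon,\tilde\varepsilon,\delta>0$ with $|\nabla w|\ge 2\varepsilon$ in $\{\mathrm{dist}(x,\partial\Omega)\le 2\delta\}$ and $w\ge 2\tilde\varepsilon$ in $\{\mathrm{dist}(x,\partial\Omega)\ge 2\delta\}$. By $C^{1}$ convergence the same (with halved constants) holds for $u_n$ for large $n$. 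A sign-changing $u_n$ vanishing on $\partial\Omega$ would then have an interior negative minimum at some $x_n$ with $\mathrm{dist}(x_n,\partial\Omega)\le 2\delta$, forcing $\nabla u_n(x_n)=0$, which contradicts $|\nabla u_n|\ge\varepsilon$ there. Thus $u_n$ cannot be nodal for large $n$, a contradiction. Only after this does $m<c_{nod}$ follow, as a consequence of $c_{nod}$ being achieved by a function different from $\pm w$.
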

\begin{proof}
By Lemma \ref{lemma:uniformbounded}, we know that every bounded solution $u$ satisfies $-w\leq u\leq w$ a.e. in $\Omega$. Take a minimizing sequence $\{u_n\}$:
\[
u_n^\pm\not\equiv 0,\qquad -\Delta u_n=f(u_n),\qquad I(u_n)\to c_{nod}.
\]
Thus $ \|u_n\|_\infty \leq \|w\|_\infty$, and also $\Delta u_n$ is uniformly bounded in $L^\infty(\Omega)$-norm. Hence, by standard elliptic regularity theory, there exists $u$ such that, up to a subsequence,
\[
u_n\to u \text{ in } C^{1,\alpha}(\overline \Omega),\qquad \text{ with } \qquad -\Delta u=f(u).
\]
If $u^\pm\not \equiv 0$, then we are done. Suppose, by contradiction, that $u^-\equiv 0$. Since $c_{nod}<0$, then necessarily $u^+\not\equiv 0$. We have $f(u)\geq 0$: if ($f1$) is satisfied it is obvious, while in the case of ($f2$) it follows from the fact that $0\leq u \leq s_f$. So, by the maximum principle, $u>0$ in $\Omega$. In conclusion, $u=w$, the unique positive solution of \eqref{eq:mainproblem}, is the $C^{1,\alpha}(\overline \Omega)$--limit of sign-changing functions which are zero on the boundary. 

Assume $\Omega$ is a $C^{1,1}$-- domain. By Hopf's lemma, $\frac{\partial w}{\partial \nu}<0$ on $\partial \Omega$. Thus there exist $\varepsilon,\tilde \varepsilon, \delta>0$ such that
\begin{align}
&|\nabla w|\geq 2\varepsilon  &\forall x:\ \text{dist}(x,\partial \Omega)\leq 2\delta\\
&w\geq 2 \tilde \varepsilon     &\forall x:\ \text{dist}(x,\partial \Omega)\geq 2\delta
\end{align}
and so, if $u_n\to u$ in $C^{1,\alpha}(\overline \Omega)$, then for large $n$
\begin{align}
&|\nabla u_n|\geq \varepsilon  &\forall x:\ \text{dist}(x,\partial \Omega)\leq 2\delta\\
&u_n\geq  \tilde \varepsilon &\forall x:\ \text{dist}(x,\partial \Omega)\geq 2\delta.
\end{align}
If $u_n$ is sign-changing, then since $u_n=0$ on $\partial \Omega$, necessarily $u_n$ achieves a minimum on $x_n$ such that $0<d(x_n,\partial \Omega)\leq 2\delta$, and $\nabla u_n(x_n)=0$, which is a contradiction. 
\end{proof}

\begin{remark}
The proof of Theorem \ref{thm:lensachieved} strongly relies on regularity assumptions on the boundary of $\Omega$. This allow the use of Hopf's lemma, used to prove that in a $C^{1}$-neighborhood of the (unique) positive solution there are no other solutions of the problem. One might wonder whether the result still holds true for a general bounded domain with Lipschitz boundary. In the particular case of the nonlinearity $f(s)=|s|^{p-1}s$ with $0<p<1$, it is proved in \cite{BrascoDePhilippisFranzina}  that the positive solution is isolated in $L^1$-sense, but only for a certain range of exponents which depends on the geometry of the boundary. If $\Omega$ is a $C^1$-domain, they could prove that the same result holds true for every $p \in (0,1)$.  In these situations, this implies the existence of least-energy nodal  solutions. \end{remark}

\section {A domain where $c_{nod}\neq c_{mp}$} \label{sec:nodaldumbel}

\begin{theorem}\label{thm:cmp_neqc_nod}
Suppose that $f$ satisfies (A1), (A2) and that
\begin{equation}
  \label{eq:assumption-f-1}
\lim_{s \to 0}\frac{f(s)}{s}>0  
\end{equation}
and
\begin{equation}
  \label{eq:assumption-f-2}
\lim_{s \to \infty}\frac{f(s)}{s} \le 0.  
\end{equation}
Then there exist domains such that $c_{nod} < c_{mp}$,
which implies the existence of a least energy nodal solution which is not of mountain pass type.
\end{theorem}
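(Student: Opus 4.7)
The plan is to take $\Omega_\varepsilon$ a $C^{1,1}$ dumbbell made of two congruent balls $B_1\cong B_2\cong B$ joined by a thin cylindrical channel $C_\varepsilon$ of width $\varepsilon>0$, invariant under the reflection $\sigma$ that swaps the two bulbs. I first enlarge $B$ so that $\lambda_1(B)<\lim_{s\to 0}f(s)/s$, which is allowed by \eqref{eq:assumption-f-1}. Since the first two Dirichlet eigenfunctions of $\Omega_\varepsilon$ concentrate, as $\varepsilon\to 0$, as the symmetric and antisymmetric combinations of $\varphi_1^B$, one has $\lambda_2(\Omega_\varepsilon)\to\lambda_1(B)$, so (A3') holds on $\Omega_\varepsilon$ for every sufficiently small $\varepsilon>0$, while (A4) on $\Omega_\varepsilon$ is immediate from \eqref{eq:assumption-f-2}. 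All the results of Sections~\ref{section:positivesolutions}--\ref{sec:nodalsolutions} therefore apply to $\Omega_\varepsilon$, and $m(\Omega_\varepsilon)=I(w_{\Omega_\varepsilon})\to 2m(B)$.

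For the upper bound I minimize $I$ on the closed antisymmetric subspace $X=\{u\in H^1_0(\Omega_\varepsilon):u\circ\sigma=-u\}$. By coercivity and the reverse-Fatou argument of Lemma \ref{lemma:m_achieved}, the infimum is attained at some $u^*\in X$; the principle of symmetric criticality then makes $u^*$ a critical point of $I$ on $H^1_0(\Omega_\varepsilon)$, and the regularity step of Theorem~\ref{th:existencemin} gives that $u^*$ is a bounded weak solution of \eqref{eq:mainproblem}. Being antisymmetric with $I(u^*)<0$, it is sign-changing. I test against $v_\varepsilon\in X$ defined by $v_\varepsilon=w_B$ on $B_1$, $v_\varepsilon=-w_B\circ\sigma$ on $B_2$ and $v_\varepsilon\equiv 0$ on $C_\varepsilon$: since $w_B$ vanishes on $\partial B$ this is in $H^1_0(\Omega_\varepsilon)$, and using that $F$ is even with $F(0)=0$ a direct calculation yields $I(v_\varepsilon)=2m(B)$. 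Hence $c_{nod}(\Omega_\varepsilon)\leq I(u^*)\leq 2m(B)$.

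The heart of the matter is the strict lower bound $c_{mp}(\Omega_\varepsilon)>2m(B)$. My strategy is to show that, for some $\delta_0>0$ independent of small $\varepsilon$, the sub-level set $\{I\leq 2m(B)+\delta_0\}\subset H^1_0(\Omega_\varepsilon)$ splits into at least four pairwise disjoint open sets, each a neighborhood of one of the four ``corner'' configurations $(\pm w_B,\pm w_B)$ on $\Omega_\varepsilon$, with $\pm w_{\Omega_\varepsilon}$ in distinct components; any $\gamma\in\Gamma$ must then exit this sub-level, which forces $c_{mp}(\Omega_\varepsilon)\geq 2m(B)+\delta_0>c_{nod}(\Omega_\varepsilon)$. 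The two ingredients are, first, the nondegeneracy of $\pm w_B$ as local minima of $I$ on $H^1_0(B)$: assumption (A2) gives $(-\Delta-f'(w_B))w_B=f(w_B)-f'(w_B)w_B>0$ on $B$, so by Krein--Rutman the principal eigenvalue of $-\Delta-f'(w_B)$ on $B$ is strictly positive, yielding the quadratic lower bound $I_B(w_B+v)\geq m(B)+\mu\|v\|_{H^1_0(B)}^2$ near each minimum; second, a thin-channel Poincar\'e inequality ensuring $I_{C_\varepsilon}(u|_{C_\varepsilon})\geq -o(1)$ uniformly on $H^1_0$-bounded sets, and allowing $u|_{B_i}$ to be truncated near the channel openings into an $H^1_0(B)$-function with essentially the same energy. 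Any $u$ with $I(u)\leq 2m(B)+\delta_0$ therefore has each $u|_{B_i}$ within $O(\sqrt{\delta_0})$ of $\pm w_B$ in $H^1$, placing $u$ in one of the four promised neighborhoods, which are mutually disjoint because the four corner states sit at pairwise $H^1$-distance at least $2\|w_B\|_{H^1_0(B)}$.

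The main obstacle is the uniform-in-$\varepsilon$ classification: one has to control the channel contribution and the cutoff error so that the four basins genuinely remain disjoint for every small $\varepsilon$, and rule out that the connectedness of $\Omega_\varepsilon$ opens a low-energy tunnel absent from the disconnected limit. A robust fallback is a contradiction argument: if $c_{mp}(\Omega_{\varepsilon_n})\to 2m(B)$ along some $\varepsilon_n\to 0$, take almost-minimizing paths $\gamma_n\in\Gamma$ and peak points $t_n$; coercivity $H^1$-bounds the $\gamma_n(t_n)$, and compactness together with Proposition~\ref{prop:uniqueness} forces their weak limit to be $(\pm w_B,\mp w_B)$, whose strict local-minimum quadratic lower bound prohibits a neighborhood of it from being reached continuously from $-w_{\Omega_{\varepsilon_n}}$ below level $2m(B)+o(1)$, the desired contradiction.
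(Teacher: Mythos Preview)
Your overall architecture matches the paper's: a symmetric dumbbell, the antisymmetric test configuration $w_1-w_2$ giving an upper bound $\le 2m(B)$ on the nodal level, and an energy barrier forcing $c_{mp}>2m(B)$. Your upper-bound step, minimizing $I$ over the odd subspace and invoking the principle of symmetric criticality, is a valid and arguably cleaner alternative to what the paper actually does (the paper instead applies Ekeland's principle on the open set $\{v:\|v\pm w_\delta\|_{H^1}>\varepsilon_*\}$ to manufacture a local minimizer there).

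The genuine gap is in your lower bound. Your nondegeneracy step writes down $f'(w_B)$ and invokes Krein--Rutman to obtain a quadratic expansion $I_B(w_B+v)\geq m(B)+\mu\|v\|^2$; your fallback contradiction argument ultimately appeals to the same quadratic bound. But the theorem assumes only (A1), i.e.\ continuity of $f$, not $f\in C^1$, so $f'$ need not exist and no Hessian is available. Even granting $C^1$, a \emph{local} quadratic bound does not classify the \emph{global} sublevel set $\{I\leq 2m(B)+\delta_0\}$: you still owe the statement that functions $H^1$-far from every corner have energy strictly above $2m(B)$, which is precisely the compactness step you are trying to shortcut. The paper bypasses all of this without differentiating $f$: it fixes the single radius $\varepsilon_*=\|w_1\|_{H^1(B_1)}$ and proves directly by contradiction that there is a uniform barrier $I\geq c>m(\Omega_0)=2m(B)$ on the two spheres $\{\|v\mp w_\delta\|_{H^1}=\varepsilon_*\}$ for all small $\delta$ (a would-be violating sequence converges strongly to one of the four global minimizers on $B_1\cup B_2$, whose distances to $w_0$ are $0$, $2\varepsilon_*$ or $4\varepsilon_*$ --- never $\varepsilon_*$). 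Two spheres around $\pm w_\delta$ already separate the endpoints of every path in $\Gamma$; the four-basin picture and the channel cutoff/Poincar\'e estimates are not needed. Finally, in case ($f2$) the functional $I$ is not $C^1$ on $H^1_0$, so both your symmetric-criticality step and any Palais--Smale reasoning must be run with the truncated functional $\tilde I$ of Section~\ref{subsec:truncation}, which you do not address.
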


\begin{remark} Observe that condition \eqref{eq:assumption-f-2} implies that (A4) is satisfied for every domain $\Omega$.
\end{remark}

The rest of the section is devoted to the proof of this result. The domain for which $c_{nod}<c_{mp}$ will be a dumbbell with two sufficiently large balls connected by a tube of sufficiently small width. 

Let $B_1$ and $B_2$ be disjoint open balls with common radius $r>1$, which is chosen large enough so that 
\begin{equation}
  \label{eq:assumption-f-1-proof}
\lim_{s \to 0}\frac{f(s)}{s}>\lambda_1(B_1)=\lambda_1(B_2).  
\end{equation}
For $\delta \in (0,1]$, let $\Omega_\delta$ be the dumbbell domain obtained by connecting the centers of $B_1$ and $B_2$ with a tube of width $\delta \in (0,1)$. Let $\Omega_*$ be the convex hull of $B_1$ and $B_2$, which contains all the sets $\Omega_\delta$, $0< \delta  < 1$ as well as $\Omega_0 := B_1 \cup B_2$. By trivial extension, we will consider $H^1_0(\Omega_\delta)$ as a subspace of $H^1_0(\Omega_*)$ for every $\delta \in [0,1)$, and we consider the functional 
$$
I: H^1_0(\Omega_*) \to \R, \qquad  
I(u)=\begin{cases}
\frac{1}{2}\int_{\Omega^*} |\nabla u|^2-\int_{\Omega^*} F(u) & \text{ if } F(u)\in L^1(\Omega_*),\\
+\infty & \text{ if } F(u)\not \in L^1(\Omega_*).
\end{cases}
$$
Observe that $f$ satisfies (A1)-(A4) for every domain $\Omega_\delta$, $\delta\in [0,1)$.
If $f$ satisfies condition ($f1$), recall that $I$ is of class $C^1$ and $I(u)=\frac{1}{2}\int_{\Omega_*} |\nabla u|^2-\int_{\Omega_*} F(u)$ for every $u\in H^1_0(\Omega_*)$, while for ($f2$) we will rely on the truncation $\tilde I$ introduced in Section \ref{subsec:truncation}.

Let $w_\delta$ be the unique positive solution of \eqref{eq:mainproblem} in $\Omega_\delta$, which satisfies $I(w_\delta)=m(\Omega_\delta)$ for $\delta \in [0,1)$. Moreover, let $w_1$ and $w_2$ the positive solutions in $B_1$ and $B_2$ respectively, so that $w_2$ is a mere translation of $w_1$ and $w_0 = w_1 +w_2$. We also have that  
$$
m(\Omega_0)= I(w_0)=I(w_1)+ I(w_2)=2 I(w_1)= 2 m(B_1).
$$

\begin{lemma}\label{lemma_convergence}
We have, as $\delta\to 0^+$,
\[
m(\Omega_\delta)\to m(\Omega_0),\quad \text{ and } \quad  w_\delta \to w_0 \text{ strongly in } H^1_0(\Omega_*).
\]
\end{lemma}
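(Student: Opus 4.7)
The plan is to proceed by a standard $\Gamma$-convergence-type argument in two steps: a ``$\limsup$'' inequality obtained by trivial extension of $w_0$, and a ``$\liminf$'' inequality obtained by weak compactness of minimizers, where the main delicate point is to show that the weak limit belongs to $H^1_0(\Omega_0)$.

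\emph{Upper bound.} For every $\delta\in(0,1)$ one has $\Omega_0\subset \Omega_\delta$, and extending $w_0$ by zero gives $w_0\in H^1_0(\Omega_\delta)$. Hence
\[
m(\Omega_\delta)\leq I(w_0)=m(\Omega_0), \qquad 0<\delta<1.
\]

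\emph{Uniform bounds on $w_\delta$.} Since $\Omega_\delta\subset \Omega_*$ implies $\lambda_1(\Omega_\delta)\geq \lambda_1(\Omega_*)$, the coercivity estimate in paragraph 2 of the proof of Lemma \ref{lemma:m_achieved}, applied with $\lambda_1(\Omega_*)$, gives constants $\varepsilon,C>0$ independent of $\delta$ with
\[
I(u)\geq \frac{\varepsilon}{2}\int_{\Omega_*}|\nabla u|^2 - C|\Omega_*| \qquad \text{for every $u\in H^1_0(\Omega_*)$.}
\]
Combined with $I(w_\delta)=m(\Omega_\delta)\leq m(\Omega_0)$, this yields a uniform bound $\|w_\delta\|_{H^1_0(\Omega_*)}\leq M$.

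\emph{Extraction and identification of the limit.} Fix any sequence $\delta_n\to 0^+$. Up to a subsequence, $w_{\delta_n}\rightharpoonup w_*$ weakly in $H^1_0(\Omega_*)$, strongly in $L^2(\Omega_*)$ and a.e. in $\Omega_*$. Since each $w_{\delta_n}$ vanishes a.e. outside $\Omega_{\delta_n}$ and for every compact $K\subset \Omega_*\setminus\overline{\Omega_0}$ one has $K\cap\Omega_{\delta_n}=\emptyset$ for $n$ large (the tube has vanishing width), we deduce $w_*\equiv 0$ a.e. on $\Omega_*\setminus\Omega_0$, and also $w_*\geq 0$ a.e.

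The nontrivial point is to prove $w_*\in H^1_0(\Omega_0)$. For this, I would pick a slightly enlarged ball $B_i^+\Supset B_i$ with $B_i^+\subset \Omega_*$ and $B_1^+\cap B_2^+=\emptyset$; then $B_i^+\setminus\overline{B_i}$ is an open collar contained in $\Omega_*\setminus\overline{\Omega_0}$, on which $w_*\equiv 0$. The function $w_*|_{B_i^+}$ thus belongs to $H^1(B_i^+)$ and vanishes on an open neighborhood of $\partial B_i$ from the outside, which forces $w_*|_{B_i}\in H^1_0(B_i)$. Together with $w_*\in H^1_0(\Omega_*)$, this gives $w_*\in H^1_0(B_1)\oplus H^1_0(B_2) = H^1_0(\Omega_0)$.

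\emph{Lower bound and strong convergence.} Using the quadratic bound \eqref{eq:bound_on_F} in case ($f1$) and the uniform $L^\infty$-bound $\|w_{\delta_n}\|_\infty\leq s_f$ provided by Corollary \ref{coro:consequences}(i) in case ($f2$), dominated convergence yields $\int_{\Omega_*}F(w_{\delta_n})\to \int_{\Omega_*}F(w_*)$. Weak lower semicontinuity of the Dirichlet integral then gives
\[
m(\Omega_0)\leq I(w_*)\leq \liminf_{n\to\infty} I(w_{\delta_n}) = \liminf_{n\to\infty} m(\Omega_{\delta_n}).
\]
Combined with the upper bound, $m(\Omega_{\delta_n})\to m(\Omega_0)$ and $w_*$ achieves $m(\Omega_0)$. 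Since $w_*\geq 0$ and $\pm w_0$ are the only minimizers (Theorem \ref{th:existencemin} and Proposition \ref{prop:uniqueness}), we must have $w_*=w_0$. Moreover $I(w_{\delta_n})\to I(w_0)$ together with $\int F(w_{\delta_n})\to \int F(w_0)$ forces $\int|\nabla w_{\delta_n}|^2\to \int|\nabla w_0|^2$, which upgrades the weak $H^1_0(\Omega_*)$-convergence to strong convergence. As the limit is independent of the extracted subsequence, the convergence holds along $\delta\to 0^+$.

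The step I expect to be the main obstacle is the identification $w_*\in H^1_0(\Omega_0)$: the only reason this works is that $\overline{B_1}\cap\overline{B_2}=\emptyset$, so that after the dumbbell channel shrinks there is genuine open space separating the two balls where the limit vanishes, enabling the enlargement argument above. Without this geometric feature the trace of $w_*$ on $\partial B_i$ could a priori fail to vanish.
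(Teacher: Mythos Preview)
Your argument follows the same strategy as the paper's proof, and all the essential steps are correct. There is, however, one technical slip in your identification of the limit space.

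You claim one can choose slightly enlarged balls $B_i^+\Supset B_i$ with $B_i^+\subset\Omega_*$. This is impossible: $\Omega_*$ is the convex hull of $B_1\cup B_2$, so a large portion of each $\partial B_i$ (the spherical cap facing away from the other ball) lies on $\partial\Omega_*$, and no open enlargement of $B_i$ can stay inside $\Omega_*$. The fix is immediate and is exactly what the paper does: first extend $w_*$ by zero to all of $\R^N$ (legitimate since $w_*\in H^1_0(\Omega_*)$), so that $w_*\in H^1(\R^N)$ vanishes a.e.\ outside $\Omega_0$. Since $\Omega_0=B_1\cup B_2$ is a Lipschitz domain, the standard characterization
\[
H^1_0(\Omega_0)=\{u\in H^1(\R^N):\ u=0\ \text{a.e.\ on } \R^N\setminus\Omega_0\}
\]
gives $w_*\in H^1_0(\Omega_0)$ directly. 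Your collar argument then becomes unnecessary, though it would also go through once you work in $\R^N$ rather than in $\Omega_*$.

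A minor remark on your closing comment: the genuine reason the identification works is not so much that open space separates the balls, but that $\Omega_0$ is Lipschitz (disjointness of $\overline{B_1}$ and $\overline{B_2}$ is precisely what prevents a cusp). The rest of your proof---the upper bound, uniform coercivity via $\lambda_1(\Omega_*)$, dominated convergence for $\int F(w_{\delta_n})$ (using $\|w_\delta\|_\infty\le s_f$ in case ($f2$)), the upgrade to strong convergence, and the identification $w_*=w_0$ via positivity---matches the paper.
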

\begin{proof}
First of all, since $w_0 \in H^1_0(\Omega_0) \subset H^1_0(\Omega_\delta)$ for $\delta \in (0,1)$, we have that 
\begin{equation}
  \label{eq:prelim-lim-m_delta}
m(\Omega_\delta) \le I(w_0)= m(\Omega_0) \qquad \text{for $\delta \in [0,1]$.}
\end{equation}

\smallbreak

Assume first that $f$ satisfies condition ($f1$). Then \eqref{eq:assumption-f-2} implies the existence of $C,\eps>0$ such that 
\begin{equation}\label{eq:upperbound_F}
|F(s)|\leq C+\frac{(1-\eps)}{2}\lambda_1(\Omega_*) s^2 \quad \text{ for every $s\in \R$},
\end{equation}
which implies that
\begin{align*}
m(\Omega_0) \geq I(w_\delta) \geq \frac{1}{2}\int_{\Omega_*} |\nabla w_\delta|^2 - \frac{1-\eps}{2}\lambda_1(\Omega_*) \int_{\Omega_*} w_\delta^2 - C|\Omega_*|= \frac{\eps}{2}\int_{\Omega_*} |\nabla w_\delta|^2-C|\Omega_*|.
\end{align*}
Therefore, the functions  $w_\delta$, $\delta \in [0,1)$, are uniformly bounded in $H^1_0(\Omega_*)$ and there exists a function $\hat w$ such that, up to a subsequence, $w_\delta \rightharpoonup \hat{w}$ in $H^1_0(\Omega_*)$ and  pointwise a.e. in $\Omega$. The latter pointwise convergence implies that $\hat{w} \equiv 0$ a.e. in $\Omega_* \setminus \Omega_0$. Since $ \Omega_0$ is a Lipschitz domain, then $\hat w\in H^1_0(\Omega_0)$. Moreover, again by \eqref{eq:upperbound_F} and by Lebesgue's Dominated Convergence Theorem, we have
\[
\int_\Omega F(w_\delta)\to \int_\Omega F(\hat w) <\infty
\]
Combining this with \eqref{eq:prelim-lim-m_delta} we have
$$
m(\Omega_0) \le I(\hat{w}) \leq \liminf_{\delta \to 0} I(w_\delta) 
\le \limsup_{\delta \to 0} I(w_\delta) \le m(\Omega_0).
$$
Hence all inequalities are equalities. From this we deduce $m(\Omega_\delta)\to m(\Omega_0)$, $\hat w=w_0$, and also the convergence 
$$
\int_{\Omega_*}|\nabla w_\delta|^2\,dx \to \int_{\Omega_*}|\nabla w_0|^2\,dx;
$$
which combined with the weak convergence $w_\delta\rightharpoonup w_0$ in $H^1_0(\Omega_*)$ yields
\begin{equation}
  \label{eq:strong-convergence-u-delta}
w_\delta \to w_0\qquad \text{strongly in $H^{1}_0(\Omega_*)$.}  
\end{equation}
\smallbreak

Assume now that $f$ satisfies condition ($f2$). Then we can repeat the previous argument simply replacing $F$ and $I$ respectively by $\tilde F$ and $\tilde I$, recalling also that $-s_f\leq w_\delta(x)\leq s_f$ for every $x\in \Omega$, $\delta\in (0,1)$.
\end{proof}

Let us now define 
$$
\eps_*:= \|w_1\|_{H^1(B_1)}= \|w_2\|_{H^1(B_2)}
$$
and $W := w_1 - w_2 \in H^1_0(\Omega_0)$, which satisfies $I(W) = I(w_0)$. Then we have that 
\begin{equation}
  \label{eq:strong-convergence-u-delta-1}
\|w_\delta - W\|_{H^{1}}  \to \|w_0 - W\|_{H^{1}} = 2\eps_* \qquad \text{as $\delta \to 0$.}
\end{equation}
By Lemma \ref{lemma_convergence} and \eqref{eq:strong-convergence-u-delta-1}, we may choose $\delta_0>0$ such that 
\begin{equation}
  \label{eq:u-delta-w-dist}
\|w_\delta\|_{H^1}> \varepsilon_* \quad \text{and}\quad 
\|w_\delta - W\|_{H^{1}} > \varepsilon_* \qquad \text{for every $\delta < \delta_0$.}
\end{equation}

\begin{lemma}
Under the previous notations, there exists $\delta_1  \in (0,\delta_0)$ and $c > m(\Omega_0)$ such that
\begin{itemize}
\item If ($f1$) holds, then
\begin{equation}
  \label{eq:energ-barrier}
 \text{$I(v) \geq c$ for every $\delta < \delta_1$ and every $v \in H^{1}_0(\Omega_{\delta})$ with $\|v - w_\delta\|_{H^{1}}=\varepsilon_{*}$.}
\end{equation}
\item If ($f2$) holds, then
\begin{equation}
  \label{eq:energ-barrier2}
 \text{$\tilde I(v) \geq c$ for every $\delta < \delta_1$ and every $v \in 
H^{1}_0(\Omega_{\delta})$ with $\|v - w_\delta\|_{H^{1}}=\varepsilon_{*}$.}
\end{equation}

\end{itemize}
\end{lemma}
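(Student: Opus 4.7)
The plan is to argue by contradiction and reduce the problem to a compactness statement on the limit domain $\Omega_0=B_1\cup B_2$, where the minimizers of $I$ on $H^1_0(\Omega_0)$ can be enumerated explicitly. We carry out the argument for case ($f1$); case ($f2$) is handled by the identical reasoning applied to the truncated functional $\tilde I$ (in view of Corollary \ref{coro:tilde_m}, Lemma \ref{lemma_convergence} applied to $\tilde I$, and the fact that $\tilde F$ also satisfies an at-most-quadratic upper bound analogous to \eqref{eq:bound_on_F}).

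Suppose, for contradiction, that no such $c>m(\Omega_0)$ exists. Then there are sequences $\delta_n\to 0$ and $v_n\in H^1_0(\Omega_{\delta_n})$ with $\|v_n-w_{\delta_n}\|_{H^1}=\varepsilon_*$ and $I(v_n)\to \ell$ for some $\ell\le m(\Omega_0)$. Since $m(\Omega_{\delta_n})\le I(v_n)$ and $m(\Omega_{\delta_n})\to m(\Omega_0)$ by Lemma \ref{lemma_convergence}, in fact $\ell=m(\Omega_0)$. Extending by zero, we regard each $v_n$ as an element of $H^1_0(\Omega_*)$. The quadratic upper bound \eqref{eq:bound_on_F} and the same coercivity argument as in the proof of Lemma \ref{lemma:m_achieved} give a uniform $H^1_0(\Omega_*)$-bound on $\{v_n\}$, so up to a subsequence $v_n\rightharpoonup v$ weakly in $H^1_0(\Omega_*)$ and strongly in $L^2(\Omega_*)$.

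Next I would show $v\in H^1_0(\Omega_0)$. For any $\varphi\in C_c^\infty(\Omega_*\setminus \overline{\Omega_0})$, eventually $\mathrm{supp}\,\varphi\subset \Omega_*\setminus\Omega_{\delta_n}$ so $\int v_n\varphi=0$; passing to the limit, $v\equiv 0$ a.e.\ on $\Omega_*\setminus\Omega_0$, and since $\Omega_0$ is Lipschitz this yields $v\in H^1_0(\Omega_0)$. By weak lower semicontinuity of the Dirichlet integral and by dominated convergence for $\int F(v_n)$ (using \eqref{eq:bound_on_F} and the $L^2$ limit $h$), we obtain $I(v)\le \liminf I(v_n)=m(\Omega_0)$, hence $v$ is a minimizer of $I$ on $H^1_0(\Omega_0)$. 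Because $\Omega_0=B_1\cup B_2$ is disconnected, $I$ splits as the sum of the energies on each ball; by Proposition \ref{prop:uniqueness} applied to $B_1$ and $B_2$ together with oddness of $f$, the minimizers are exactly the four functions
\[
\pm w_0 = \pm(w_1+w_2),\qquad \pm W = \pm(w_1-w_2).
\]
Combining $I(v_n)\to I(v)$ with $\int F(v_n)\to \int F(v)$ gives $\|\nabla v_n\|_{L^2}\to \|\nabla v\|_{L^2}$, so $v_n\to v$ strongly in $H^1_0(\Omega_*)$.

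Finally, passing to the limit in $\|v_n-w_{\delta_n}\|_{H^1}=\varepsilon_*$ using the strong convergence $w_{\delta_n}\to w_0$ from Lemma \ref{lemma_convergence}, we get $\|v-w_0\|_{H^1}=\varepsilon_*$. But a direct computation of the four possible distances yields
\[
\|w_0-w_0\|_{H^1}=0,\qquad \|-w_0-w_0\|_{H^1}=2\sqrt{2}\,\varepsilon_*,\qquad \|\pm W-w_0\|_{H^1}=2\varepsilon_*,
\]
none of which equals $\varepsilon_*$, a contradiction. This proves \eqref{eq:energ-barrier}; the same line of reasoning applied to $\tilde I$ proves \eqref{eq:energ-barrier2}. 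The main obstacle in this plan is the identification of the weak limit as an element of $H^1_0(\Omega_0)$ together with the explicit enumeration of the minimizers on the disconnected limit domain; once these are in place, the distance computation closes the argument cleanly thanks to the specific choice of $\varepsilon_*$.
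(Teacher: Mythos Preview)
Your proof is correct and follows essentially the same contradiction/compactness argument as the paper: extract a weakly convergent subsequence in $H^1_0(\Omega_*)$, identify the limit as a minimizer on $H^1_0(\Omega_0)$, upgrade to strong convergence, and reach a contradiction by computing the distances from $w_0$ to the four minimizers $\pm w_0,\pm W$. One small caveat: your test-function step for showing $v\in H^1_0(\Omega_0)$ is not quite right as stated, since a $\varphi\in C_c^\infty(\Omega_*\setminus\overline{\Omega_0})$ whose support meets the central segment of the tube lies in every $\Omega_{\delta_n}$; this is harmless because that segment has measure zero, and the paper avoids the issue by using pointwise a.e.\ convergence instead.
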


\begin{proof}
Assume $f$ satisfies ($f1$); the other situation is analogous working with truncations. If \eqref{eq:energ-barrier} was not true, there would exist a sequence $\delta_k \to 0$ and functions $v_k \in H^{1}_0(\Omega_{\delta_k})$ with $\|v_k - w_{\delta_k}\|_{H^1}=\varepsilon$ for all $k$ and 
$$
I(v_k) \to m(\Omega_0) \qquad \text{as $k \to \infty$.}
$$
Since the sequence is bounded in $H^{1}_0(\Omega_*)$, one can extract a subsequence (still denoted by $v_k$) such that $v_k \rightharpoonup v$ weakly in $H^{1}_0(\Omega_*)$ and pointwise a.e. in $\Omega_*$. As in the proof of Lemma \ref{lemma_convergence}, we then infer that 
$v \in H^{1}_0(\Omega_0)$, and, by the weak lower semicontinuity of $I$, that 
$$
I(v) \leq \liminf_{k \to \infty}I(v_k) = m(\Omega_0).
$$
By definition of $m(\Omega_0)$, equality follows, and as before we then 
deduce that 
$$
\int_{\Omega_*}|\nabla v_k|^2\,dx \to \int_{\Omega_*}|\nabla v|^2\,dx \qquad \text{as $k \to \infty$,}
$$
and $v_k \to v$ strongly in $H^{1}(\Omega_*)$. This together with Lemma \ref{lemma_convergence} implies that 
$$
\|v - w_0\|_{H^1} = 
\varepsilon_*.
$$
On the other hand, since $I(v)=m(\Omega_0)$, we have that either $v=w_0$, $v=-w_0$ or $v = W$, which implies that
either $\|v - w_0\|_{H^1}=0$, $\|v - w_0\|_{H^1}=2 \|w_0\|_{H^1}= 4 \varepsilon_*$ or $\|v - w_0\|_{H^1}=2 \eps_*$. Hence all cases are impossible, which gives a contradiction. 
\end{proof}
By the evenness of $I$, Lemma \ref{lemma_convergence} also implies that  
\begin{itemize}
\item If ($f1$) holds, then
\begin{equation}
  \label{eq:energ-barrier-1}
 \text{$I(v) \geq c$ for every $\delta < \delta_1$ and every $v \in H^{1}_0(\Omega_{\delta})$ with $\|v + w_\delta\|_{H^{1}}=\varepsilon_{*}$.}
\end{equation}
\item If ($f2$) holds, then
\begin{equation}
  \label{eq:energ-barrier-2}
 \text{$\tilde I(v) \geq c$ for every $\delta < \delta_1$ and every $v \in H^{1}_0(\Omega_{\delta})$ with $\|v + w_\delta\|_{H^{1}}=\varepsilon_{*}$.}
\end{equation}
\end{itemize}

\begin{proof}[Conclusion of the proof of Theorem \ref{thm:cmp_neqc_nod}]
Again, we do it only in case ($f1$), since for ($f2$) is analogous using truncations. We now fix $\delta \in (0,\delta_1)$, and we prove the claim of the theorem
for the domain $\Omega_\delta$. For this we define 
$$ 
\mathcal{A_\delta}:=\{ v \in H^{1}_0(\Omega_\delta)\,|\,\|v\pm w_\delta \|_{H^1(\Omega_\delta)}> \varepsilon_* \},
$$
and we note that 
\begin{equation}
  \label{eq:energy-barrier-2}
\inf_{\partial \mathcal{A_\delta}} I \ge c  
\end{equation}
by \eqref{eq:energ-barrier} and \eqref{eq:energ-barrier-1}. Moreover, since $\|w_\delta\|_{H^1} >\eps_*$ by \eqref{eq:u-delta-w-dist}, every path joining $w_\delta$ and $-w_\delta$ intersects $\partial \mathcal{A_\delta}$. Consequently, 
\begin{equation}
  \label{eq:mountain-pass-barrier}
c_{mp} \ge c.  
\end{equation}
On the other hand, since $W \in \mathcal{A_\delta}$ by \eqref{eq:u-delta-w-dist}, we have 
\begin{equation}
  \label{eq:c-1-est}
\hat{c}:=\inf_{\mathcal{A_\delta}} I \le I(W) = m(\Omega_0)< c.
\end{equation}
By \eqref{eq:energy-barrier-2}, \eqref{eq:c-1-est} and a standard application of Ekeland's variational principle, we then find a sequence $\{v_k\}$ in $\mathcal{A_\delta}$ such that 
$$
I(v_k)\to \hat{c} \qquad \text{and}\qquad
\|I'(v_k)\|_{H^1_0(\Omega_\delta)^*} \to 0.
$$
Since the functional $I|_{H^1_0(\Omega_\delta)}$ satisfies the Palais-Smale condition, it is possible to extract a subsequence -- still denoted by $\{v_k\}$ -- such that $v_k \to v$ in $H^{1}_0(\Omega_\delta)$. Therefore, 
$I(v)=c_1$, which means that $v$ is a local minimizer of $I|_{H^1_0(\Omega_\delta)}$. By construction, $v$ is thus a nodal solution on $\Omega_\delta$, so that 
$$
c_{nod} \le \hat{c} < c \le c_{mp},
$$
as claimed.
\end{proof}

\begin{remark}
A further sign changing solution can be found by applying the Mountain-Pass theorem to the class of paths joining $w_\delta$ with the solution $v$ found in the previous theorem. Moreover, if, in addition, we assume that $f \in C^1(\R)$, then we can find two further sign changing critical points $u_1,u_2$, where
$$
-w_\delta \le u_1 \le v \le u_2 \le w_\delta.
$$
This follows by applying a suitable variant of the Mountain-Pass theorem in order intervals. Under somewhat different assumptions, this Mountain-Pass theorem can be found e.g. in \cite[Theorem 1.3]{Li-Wang}. The main underlying tool needed in our setting is Lemma~\ref{sec:nodal-solut-morse} ahead. For the sake of brevity, we omit the details. 
\end{remark}

\section{Nodal solutions, Morse index and symmetry} \label{sec:morseindex}

Recall assumptions (A1)-(A4) from Section \ref{section:positivesolutions} and $(A3')$ from Section \ref{sec:nodalsolutions}. Throughout this section we assume in addition that $f\in C^1(\R)$. In this case we can define the Morse index $m(u)$ of a solution $u$ of \eqref{eq:mainproblem} as the number of negative Dirichlet eigenvalues of the operator $-\Delta - f'(u)$ in $\Omega$ (counted with multiplicity). We start with the following Morse index estimate for least energy nodal solutions.

\begin{theorem}\label{prop:boundmorseindex}
Assume that $f$ satisfies (A1)-(A2)-(A3')-(A4), and let $u \in H^1_0(\Omega)$ be a bounded nodal solution of \eqref{eq:mainproblem} with $I(u)=c_{nod}$. Then $m(u)\leq 1$.
\end{theorem}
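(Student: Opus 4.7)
The strategy is a proof by contradiction: assuming $m(u)\ge 2$, I plan to produce a bounded nodal solution of \eqref{eq:mainproblem} with energy strictly less than $c_{nod}$, contradicting the defining infimum. Since $u$ is bounded and $f\in C^1$, elliptic regularity upgrades $u$ to $C^{2,\alpha}(\overline\Omega)$, so that $f'(u)\in L^\infty(\Omega)$ and the linearized operator $L:=-\Delta-f'(u)$ has compact resolvent on $H^1_0(\Omega)$. The Morse-index hypothesis then supplies a $2$-dimensional subspace $W\subset H^1_0(\Omega)$ spanned by two $C^{1,\alpha}(\overline\Omega)$ eigenfunctions of $L$ associated to negative eigenvalues, on which the Hessian quadratic form $Q(\varphi):=\int_\Omega(|\nabla\varphi|^2-f'(u)\varphi^2)\,dx$ is negative definite.

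The first ingredient will be a Taylor expansion. Since $I$ is $C^1$ under ($f1$), and since $I$ coincides with the $C^1$ truncation $\tilde I$ of Section~\ref{subsec:truncation} on an $L^\infty$-neighborhood of $u$ under ($f2$) by Corollary~\ref{coro:consequences}(i), I will write
$$I(u+\varphi)=I(u)+\tfrac12 Q(\varphi)+o(\|\varphi\|_{H^1_0}^2)$$
for $\varphi$ small. Combined with the finite-dimensionality of $W$ — all norms on $W$ are equivalent, so $H^1_0$-smallness forces $C^1(\overline\Omega)$-smallness — this yields $r,c>0$ such that every $\varphi\in W$ with $0<\|\varphi\|_{H^1_0}\le r$ produces a nodal function $u+\varphi$ (small $C^1$-perturbations preserve the positive and negative parts of $u$, which are strictly signed on their nodal components by the strong maximum principle) satisfying $I(u+\varphi)\le c_{nod}-c\|\varphi\|_{H^1_0}^2$. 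This exhibits a punctured $2$-disk of bounded nodal functions inside $\{I<c_{nod}\}$.

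Starting from any $\phi_0$ in this disk, I will run the negative pseudo-gradient flow $t\mapsto \eta(t,\phi_0)$ of $I$ (resp.\ of $\tilde I$ in case ($f2$)). Coercivity (Lemma~\ref{lemma:m_achieved}) and the Palais--Smale condition force subconvergence in $H^1_0(\Omega)$ to a critical point $\phi^*$ with $I(\phi^*)\le I(\phi_0)<c_{nod}$, and Corollary~\ref{coro:consequences}(ii) ensures that $\phi^*$ is a bounded solution of \eqref{eq:mainproblem}. If $\phi^*$ were nodal this alone would contradict the definition of $c_{nod}$; hence, by Proposition~\ref{prop:uniqueness}, $\phi^*\in\{w,-w\}$. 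To close the argument, elliptic bootstrap along the tail of the Palais--Smale family $\{\eta(t,\phi_0)\}_{t\text{ large}}$ will upgrade the $H^1_0$-convergence to $C^1(\overline\Omega)$-convergence towards the $C^{1,\alpha}$ solution $\phi^*$, and the Hopf-lemma argument from the proof of Theorem~\ref{thm:lensachieved} (using $|\nabla\phi^*|\ge 2\eps$ in a boundary strip) then rules out the possibility that $\eta(t,\phi_0)$ remains nodal for large $t$.

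The main obstacle will be the control of nodality along the flow: a priori the negative gradient flow does not preserve nodality, and one must exclude that $\eta(t,\phi_0)$ exits the open nodal set through a sign-definite configuration without being detected. I plan to handle this by combining the $C^1$-proximity of $\eta(t,\phi_0)$ to $\phi^*\in\{w,-w\}$ for large $t$ with the splitting of $\{I<c_{nod}\}$ into two components adjacent to $w$ and $-w$ (thanks to the mountain-pass lower bound $c_{mp}\ge c_{nod}$ from Section~\ref{sec:nodalsolutions}), so that any exit from the nodal set along the flow must already have brought $\eta(t,\phi_0)$ into a $C^1$-neighborhood of $\pm w$ where nodality is precisely excluded by the Hopf-based argument of Theorem~\ref{thm:lensachieved}.
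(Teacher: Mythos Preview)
Your argument, as written, never produces a contradiction. You perturb $u$ to a single nodal point $\phi_0$ with $I(\phi_0)<c_{nod}$, run the flow, and show that the limit is $\pm w$ and hence that the trajectory eventually becomes sign-definite. But that is exactly what one expects; nothing about it conflicts with $I(u)=c_{nod}$. Notice that your argument uses only \emph{one} point of the punctured disk, so if it were valid it would already work under the weaker hypothesis $m(u)\ge 1$ and would prove $m(u)=0$---which is false by Theorem~\ref{sec:ball-case}(ii) on radial domains. The two-dimensionality of the negative space must enter in an essential way, and in your outline it does not.

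The paper exploits the two dimensions topologically. It works with the gradient flow $\eta^t$ in $C^1_0(\Omega)$, which by Lemma~\ref{sec:nodal-solut-morse} leaves every order interval $[v_1,v_2]$ between sub- and supersolutions invariant. On the circle $Y_\rho=\{u+v:\ v\in\spann(\phi_1,\phi_2),\ \|v\|_{C^1}=\rho\}$ one defines the relatively open sets $Y_\rho^{\pm}$ of points whose orbit eventually enters $[0,w]$ (resp.\ $[-w,0]$). Using that $\phi_1>0$ and the order-interval invariance, one checks $u\pm\varepsilon\phi_1\in Y_\rho^{\pm}$, so both sets are nonempty; they are disjoint because $[0,w]\cap[-w,0]=\{0\}$ while $I<0$ along the flow. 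Connectedness of the circle then forces $Y_\rho^0:=Y_\rho\setminus(Y_\rho^+\cup Y_\rho^-)\neq\varnothing$, and for $z\in Y_\rho^0$ the flow still subconverges to a solution with energy $<c_{nod}$, hence to $\pm w$, which forces $z\in Y_\rho^{\pm}$---the contradiction. The order-preserving structure of the flow (not a generic $H^1_0$ pseudo-gradient) is what makes $Y_\rho^{\pm}$ open and flow-invariant; your proposal has no substitute for this.

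A secondary gap: you invoke ``elliptic bootstrap along the tail of the Palais--Smale family $\{\eta(t,\phi_0)\}$'' to upgrade $H^1_0$-convergence to $C^1$-convergence, but the flow points are not solutions, so elliptic regularity does not apply to them. The paper avoids this by running the flow in $C^1_0(\Omega)$ from the outset and using compactness of $K$ on $L^\infty$-bounded sets (Lemma~\ref{sec:nodal-solut-morse-1}) together with the a priori confinement $\eta^t(\cdot)\in[-w,w]$.
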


The proof of this theorem and other results in this section uses global compactness and invariance properties of an $H^1$-gradient flow associated with the functional $I$ in a suitable subspace of $H^1_0(\Omega)$. Under slightly different assumptions, these invariance properties are derived in \cite[Sections 3 and 7]{bartsch}. For matters of completeness, we  include the derivation here under the present assumptions, essentially following the arguments in \cite{bartsch} in our proof of Lemma~\ref{sec:nodal-solut-morse} ahead. 

We recall that $w$ denotes the unique bounded positive solution of (\ref{eq:mainproblem}), and we fix $\kappa>0$ with the property that 
\begin{equation}
  \label{eq:strict-increase}
t \mapsto g(t):= f(t)+ \kappa t \qquad \text{is strictly increasing in the interval $\bigl[-\|w\|_{L^\infty}, \|w\|_{L^\infty}\bigr]$.}
\end{equation}

Within this section, we consider the equivalent scalar product
$$
(u,v) \mapsto \langle u,v \rangle_{H^1}:= \int_{\Omega}\Bigl( \nabla u \cdot \nabla v + \kappa uv \Bigr)\,dx 
$$
in $H^1_0(\Omega)$. Moreover, we consider the Banach space 
$$
C^1_0(\Omega):= \{u \in C^1(\overline \Omega)\::\: u\big|_{\partial \Omega}\equiv 0\} \subset H^1_0(\Omega),
$$
equipped with the usual norm $u \mapsto \|u\|_{C^1}:= \|u\|_{L^\infty} + \|\nabla u\|_{L^\infty}$. We note that, since $f \in C^1(\R)$, the restriction of the functional $I$ to the space $C^1_0(\Omega)$ is of class $C^2$. Moreover, for a function $u \in C^1_0(\Omega)$, the gradient of $I$ at $u$ with respect to $\langle \cdot , \cdot \rangle_{H^1}$ is given by $u - K(u)$, where 
\[
K: C^1_0(\Omega) \to C^1_0(\Omega), \qquad \qquad K(u)=(-\Delta +\kappa)^{-1}g(u).
\]
In other words, $v= K(u) \in C^1_0(\Omega)$ is the unique solution of the Dirichlet problem 
$$
-\Delta v + \kappa v = g(u) \quad \text{in $\Omega$,} \qquad \qquad v = 0 \quad \text{on $\partial \Omega$,}
$$
and we have 
\begin{equation}
  \label{eq:derivative-formula-restricted}
I'(u)z = \int_{\Omega} \nabla u \nabla z \,dx - \int_{\Omega}f(u)z\,dx = 
\langle u- K(u), z \rangle_{H^1} \qquad \text{for $u, z \in C^1_0(\Omega)$.}
\end{equation}
The following is a well known and straightforward consequence of classical elliptic estimates and the assumption $f \in C^1(\R)$.

\begin{lemma}
\label{sec:nodal-solut-morse-1}
\begin{enumerate}
\item[(i)] $K: C^1_0(\Omega) \to C^1_0(\Omega)$ is locally Lipschitz continuous. 
\item[(ii)] If $A \subset C^1_0(\Omega)$ is bounded with respect to $\|\cdot\|_{L^\infty(\Omega)}$, then $K(A)$ is relatively compact in $C^1_0(\Omega)$. 
\end{enumerate}
\end{lemma}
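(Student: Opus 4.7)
The plan is to deduce both statements directly from the Calder\'on--Zygmund $L^p$ regularity theory for $-\Delta + \kappa$ on the $C^{1,1}$ domain $\Omega$, combined with Morrey's embedding, reducing everything to elementary Lipschitz and boundedness properties of $g = f + \kappa\,\mathrm{id}$ on bounded intervals of $\R$. Note that $g \in C^1(\R)$ since $f \in C^1(\R)$, and that $K$ indeed takes values in $C^1_0(\Omega)$: for $u \in C^1_0(\Omega) \subset L^\infty(\Omega)$ we have $g(u) \in L^\infty(\Omega)$, so by $L^p$ estimates $K(u) \in W^{2,p}(\Omega) \cap H^1_0(\Omega)$ for every $p < \infty$, which embeds into $C^1(\overline{\Omega})$ with zero trace.

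For (i), fix a $C^1$-bounded set $B \subset C^1_0(\Omega)$, say $\|u\|_{C^1} \leq R$ for all $u \in B$; in particular $\|u\|_{L^\infty} \leq R$. For $u_1, u_2 \in B$, the difference $v := K(u_1) - K(u_2) \in C^1_0(\Omega)$ solves
\[
-\Delta v + \kappa v = g(u_1) - g(u_2) \text{ in } \Omega, \qquad v = 0 \text{ on } \partial \Omega.
\]
Since $g \in C^1(\R)$, the mean value theorem yields
\[
\|g(u_1) - g(u_2)\|_{L^\infty(\Omega)} \leq L_R \|u_1 - u_2\|_{L^\infty(\Omega)}, \qquad L_R := \sup_{|t| \leq R}|g'(t)|.
\]
Fixing any $p > N$, the $L^p$ estimate for $-\Delta + \kappa$ with zero Dirichlet data on a $C^{1,1}$ domain gives
\[
\|v\|_{W^{2,p}(\Omega)} \leq C_p \|g(u_1) - g(u_2)\|_{L^p(\Omega)} \leq C_p |\Omega|^{1/p} L_R \|u_1 - u_2\|_{L^\infty(\Omega)},
\]
and Morrey's embedding $W^{2,p}(\Omega) \hookrightarrow C^1(\overline \Omega)$, valid for $p > N$, produces $\|v\|_{C^1} \leq C'_R \|u_1 - u_2\|_{C^1}$. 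This is exactly local Lipschitz continuity of $K$.

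For (ii), let $A \subset C^1_0(\Omega)$ be $L^\infty$-bounded with $\|u\|_{L^\infty} \leq R$ for every $u \in A$. Continuity of $g$ gives $\|g(u)\|_{L^\infty(\Omega)} \leq M_R := \sup_{|t| \leq R}|g(t)|$, so the same $L^p$ estimate applied to $K(u)$ itself produces a uniform bound $\|K(u)\|_{W^{2,p}(\Omega)} \leq C_p M_R |\Omega|^{1/p}$ for $u \in A$ and any $p > N$. The embedding $W^{2,p}(\Omega) \hookrightarrow C^1(\overline \Omega)$ is compact on the bounded $C^{1,1}$ domain $\Omega$, hence $K(A)$ is relatively compact in $C^1(\overline \Omega)$; since every element of $K(A)$ vanishes on $\partial \Omega$ by construction, $K(A)$ is relatively compact in $C^1_0(\Omega)$.

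There is no genuine obstacle here, in line with the paper's remark that the lemma is a straightforward consequence of classical elliptic regularity. The only points that deserve a line are (a) the requirement $p > N$ in order to invoke Morrey's embedding into $C^1$ and its compactness, and (b) the validity of the $W^{2,p}$ boundary estimate, which is granted by the standing $C^{1,1}$ assumption on $\partial \Omega$.
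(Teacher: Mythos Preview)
Your proof is correct and is precisely the standard argument the paper alludes to when it calls the lemma a ``well known and straightforward consequence of classical elliptic estimates and the assumption $f \in C^1(\R)$'' --- the paper does not supply a proof of its own. The $L^p$ Calder\'on--Zygmund estimate for $-\Delta+\kappa$ on a $C^{1,1}$ domain combined with the (compact) Morrey embedding $W^{2,p}\hookrightarrow C^1(\overline\Omega)$ for $p>N$ is exactly the intended mechanism.
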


Next, for $u\in C^1_0(\Omega)$, we let $t \mapsto \eta^t(u)$ be the unique solution of the initial value problem 
\begin{equation}\label{eq:eta_def}
\begin{cases}
\frac{d}{dt}\, \eta^t(u)=K(\eta^t(u))-\eta^t(u)\\
\eta^0(u)=u,
\end{cases}
\end{equation}
defined on $[0,\tau_m(u))$, where $\tau_m(u)$ is the maximal time of existence. Moreover, for functions $v_1,v_2 \in C^1_0(\Omega)$, we let 
$$
[v_1,v_2]:= \{\psi \in C^1_0(\Omega) \::\: v_1 \le \psi \le v_2 \quad \text{in $\Omega$}\}
$$
denote the {\em order interval} in $C^1_0(\Omega)$ spanned by $v_1$ and $v_2$.
We need the following lemma.

\begin{lemma}
\label{sec:nodal-solut-morse}  
Let $v_1,v_2 \in [-w,w]$ be solutions of (\ref{eq:mainproblem}) with $v_1 \le v_2$, and let $u \in [v_1,v_2]$. Then we have: 
\begin{enumerate}
\item[(i)] $\tau_m(u)= \infty$. 
\item[(ii)] $\eta^t(u) \in [v_1,v_2]$ for all $t \in [0,\infty)$. 
\item[(iii)] The map $t \mapsto I(\eta^t(u))$ is nonincreasing in $[0,\infty)$. Moreover, if $u$ is no solution of (\ref{eq:mainproblem}), then $I(\eta^t(u)) < I(u)$ for $t>0$.
\item[(iv)] For any sequence of numbers $t_n \ge 0$ with $t_n \to \infty$ we have $\eta^{t_n}(u) \to u_\infty$ in $C^1_0(\Omega)$ after passing to a subsequence, where $u_\infty \in [v_1,v_2]$ is a solution of (\ref{eq:mainproblem}).
\end{enumerate}
\end{lemma}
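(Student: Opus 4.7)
The plan is to exploit the Duhamel form
\[
\eta^t(u) = e^{-t}u + \int_0^t e^{-(t-s)} K(\eta^s(u))\, ds
\]
of the flow together with two structural facts. First, $v_1$ and $v_2$ are fixed points of $K$ since they solve (\ref{eq:mainproblem}). Second, by (\ref{eq:strict-increase}) and the maximum principle for $-\Delta + \kappa$, the map $K$ is order-preserving on functions taking values in $[-\|w\|_{L^\infty}, \|w\|_{L^\infty}]$: if $u_1 \le u_2$ pointwise and both are valued in this range, then $g(u_1) \le g(u_2)$, whence $K(u_2) - K(u_1) = (-\Delta + \kappa)^{-1}(g(u_2)-g(u_1)) \ge 0$.

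For (ii) I would use a connectedness argument. Set $A := \{t \in [0,\tau_m(u)) : v_1 \le \eta^s(u) \le v_2 \text{ in } \Omega \text{ for all } s \in [0,t]\}$. Then $0 \in A$, and $A$ is closed in $[0,\tau_m(u))$ by continuity of $t \mapsto \eta^t(u)$ in $C^1_0(\Omega) \hookrightarrow C(\overline{\Omega})$. To see $A$ is open, fix $t_0 \in A$ and rebuild the solution from $\eta^{t_0}(u)$ by Picard iteration restricted to paths in $[v_1,v_2]$: setting $\omega_0 \equiv \eta^{t_0}(u)$ and
\[
\omega_{n+1}(t) = e^{-t}\eta^{t_0}(u) + \int_0^t e^{-(t-s)} K(\omega_n(s))\, ds,
\]
the inductive hypothesis $\omega_n(s) \in [v_1,v_2]$ combined with order-preservation of $K$ and $K(v_i)=v_i$ yields $K(\omega_n(s)) \in [v_1,v_2]$; since $v_i = e^{-t}v_i + \int_0^t e^{-(t-s)} v_i\, ds$, a convex-combination argument gives $\omega_{n+1}(t) \in [v_1,v_2]$. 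Contraction on a small interval $[0,\delta]$ together with uniqueness for (\ref{eq:eta_def}) forces $\eta^{t_0+\cdot}(u)$ to coincide with $\lim_n \omega_n$, hence to remain in $[v_1,v_2]$; so $t_0 + \delta \in A$. Connectedness of $[0,\tau_m(u))$ gives $A = [0,\tau_m(u))$.

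Assertion (i) is then immediate: the $L^\infty$-bound $\|\eta^t(u)\|_{L^\infty} \le \|w\|_{L^\infty}$ from (ii) and Lemma \ref{sec:nodal-solut-morse-1}(ii) yield a uniform $C^1_0$-bound on $\{K(\eta^t(u))\}_{t \in [0,\tau_m(u))}$, so Gronwall applied to $\|\eta^t(u)\|_{C^1}$ along (\ref{eq:eta_def}) prevents finite-time blow-up. For (iii), (\ref{eq:derivative-formula-restricted}) and the chain rule give
\[
\frac{d}{dt} I(\eta^t(u)) = \langle \eta^t(u) - K(\eta^t(u)),\, K(\eta^t(u)) - \eta^t(u)\rangle_{H^1} = -\|\eta^t(u) - K(\eta^t(u))\|_{H^1}^2 \le 0;
\]
if $u$ is not a solution, then $u \neq K(u)$, so the right-hand side is strictly negative at $t=0$ and strict decrease follows for $t>0$.

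The main obstacle is (iv), where compactness in $C^1_0$ and identification of the limit both require care. $L^p$-regularity applied to $(-\Delta + \kappa)^{-1}$ with the $L^\infty$-bound from (ii) gives a uniform bound on $\|K(\eta^t(u))\|_{W^{2,p}}$ for every $p < \infty$, hence in $C^{1,\alpha}$; reinserting this into the Duhamel formula yields a uniform $C^{1,\alpha}$-bound on $\eta^t(u)$ for $t \ge 1$ (the decaying term $e^{-t}u$ is negligible for large $t$). Arzel\`a--Ascoli then extracts a subsequence $\eta^{t_n}(u) \to u_\infty$ in $C^1_0(\Omega)$, with $u_\infty \in [v_1,v_2]$ by (ii). To show $u_\infty$ solves (\ref{eq:mainproblem}), note that by (iii) and the lower bound $I \ge m$, the limit $L := \lim_{t \to \infty} I(\eta^t(u))$ exists in $\R$. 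For any fixed $\tau > 0$,
\[
\int_{t_n}^{t_n+\tau} \|\eta^s(u) - K(\eta^s(u))\|_{H^1}^2\, ds = I(\eta^{t_n}(u)) - I(\eta^{t_n+\tau}(u)) \to 0.
\]
Continuous dependence on initial data applied to (\ref{eq:eta_def}) gives $\eta^{t_n+s}(u) = \eta^s(\eta^{t_n}(u)) \to \eta^s(u_\infty)$ in $C^1_0(\Omega)$ uniformly for $s \in [0,\tau]$, so passing to the limit in the above integral and using continuity of $s \mapsto \|\eta^s(u_\infty) - K(\eta^s(u_\infty))\|_{H^1}$ forces $\eta^s(u_\infty) = K(\eta^s(u_\infty))$ for every $s \in [0,\tau]$; at $s = 0$ this gives $u_\infty = K(u_\infty)$, so $u_\infty$ is a solution of (\ref{eq:mainproblem}).
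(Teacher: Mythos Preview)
Your proof is correct and follows the same overall strategy as the paper: order-preservation of $K$ on $[v_1,v_2]$ via the maximum principle and (\ref{eq:strict-increase}), invariance of $[v_1,v_2]$ under the flow, compactness of the orbit through the Duhamel representation, and the energy-descent identity to identify subsequential limits as equilibria. The implementations differ only in execution---you use Picard iteration for (ii) where the paper invokes a polygonal approximation, and for (iv) you argue via continuous dependence and the vanishing of $\int_{t_n}^{t_n+\tau}\|\eta^s(u)-K(\eta^s(u))\|_{H^1}^2\,ds$, whereas the paper selects times $s_n$ with $|t_n-s_n|\to 0$ and $\frac{d}{dt}I(\eta^t(u))\big|_{t=s_n}\to 0$; both are standard routes to the same conclusion.

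One small wording point in (iv): since $u$ is only assumed $C^1$, the term $e^{-t}u$ need not be $C^{1,\alpha}$, so $\eta^t(u)$ itself does not inherit a uniform $C^{1,\alpha}$ bound. What your Duhamel argument actually gives is that $\eta^{t_n}(u)-e^{-t_n}u$ is uniformly bounded in $C^{1,\alpha}$, hence precompact in $C^1_0$, while $e^{-t_n}u\to 0$ in $C^1_0$; this suffices for Arzel\`a--Ascoli, and your parenthetical remark indicates you see this.
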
 

\begin{proof}
As mentioned above, we essentially follow arguments in \cite{bartsch}. Let $C:= [v_1,v_2]$. Then $C \subset C^1_0(\Omega)$ is a closed convex set. Moreover, $K(C) \subset C$. To see this, we let $q \in C$ and $v = K(q)$. Then $z:= v-v_1$ satisfies 
\begin{equation}
\label{invariance-equation-1}
-\Delta z +\kappa z = g(q)-g(v_1)=c(x)(q-v_1) \quad \text{in $\Omega$,}\qquad \qquad z = 0  \quad \text{on $\partial \Omega$,}
\end{equation}
where $c(x) = \int_0^1 g'((1-s)v_1(x)+s q(x))ds$ for $x \in \Omega$. Since $g \in C^1(\R)$ and $v_1,v_2$ are bounded, we have $c \in L^\infty(\Omega)$. Moreover, $c \ge 0$ in $\Omega$ as a consequence of (\ref{eq:strict-increase}), and $q-v_1\geq 0$ in $\Omega$. Multiplying (\ref{invariance-equation-1}) with $-z^-= \min \{z,0\}\leq 0$ and integrating, we find that 
$$
\int_{\Omega}\bigl(|\nabla z^-|^2 + \kappa |z^-|^2\bigr)dx \le 0
$$
and therefore $z^- \equiv 0$, i.e., $v \ge v_1$. Similarly, we see that $v \le v_2$, and therefore $v \in C$.\\
Now let $u \in C$, and let $\cO(u):= \{\eta^t(u)\::\: t \in [0,\tau_m(u)) \} \subset C^1_0(\Omega)$ denote the trajectory starting at $u$. Since $K(C) \subset C$, a standard polygonal approximation of $\cO(u)$ (see e.g. \cite{Deimling}) yields that $\cO(u) \subset C$. 
From this and Lemma~\ref{sec:nodal-solut-morse-1}(ii), it follows that the set $K(\cO(u))$ is relatively compact in $C^1_0(\Omega)$. Next we note that, since $\frac{d}{dt} \, e^t \eta^{t}(u)= e^{t}K(\eta^t(u))$, we have 
$$
\eta^t(u)= e^{-t}\Bigl(u + \int_0^t e^{s}K(\eta^s(u))\,ds\Bigr) = e^{-t}u + \int_0^t e^{-s}\,K(\eta^{t-s}(u))\,ds.
$$
From this and the relative compactness of $K(\cO(u))$, we deduce that $\cO(u) \subset C^1_0(\Omega)$ is relatively compact as well, and this implies, in particular, that $\tau_m(u)=\infty$. Next we note that, by (\ref{eq:derivative-formula-restricted}),  
$$
\frac{d}{dt} I(\eta^t(u)) = I'(\eta^t(u))\bigl(K(\eta^t(u))-\eta^t(u)\bigr) = 
- \|K(\eta^t(u))-\eta^t(u)\|_{H^1}^2 \le 0 \qquad \text{for $t \in [0,\infty)$.}$$
Consequently, the map $t \mapsto I(\eta^t(u))$ is nonincreasing. Moreover, if $u$ is no solution of (\ref{eq:mainproblem}), then we have $K(u) \not = u$ and therefore  
$$
\frac{d}{dt} I(\eta^t(u)) = - \|K(\eta^t(u))-\eta^t(u)\|_{H^1}^2 < 0 \qquad \text{for sufficiently small $t \ge 0$.}
$$
Consequently, $I(\eta^t(u))<I(u)$ for $t >0$ in this case. Finally, since $\cO(u)$ is relatively compact in $C^1_0(\Omega)$, we have 
$$
c_u:= \inf \limits_{\cO(u)}I > - \infty.
$$
Therefore, if $(t_n)_n$ is a sequence of numbers $t_n \ge 0$ with $t_n \to \infty$, we have $\lim \limits_{n \to \infty} I(\eta^{t_n}(u)) = c_u>-\infty$.
Hence there exists a further sequence $(s_n)_n$ of numbers $0 \le s_n \le t_n$ with $t_n -s_n \to 0$ as $n \to \infty$ and 
\begin{equation}
  \label{eq:I-prime-sequence}
\frac{d}{dt}\Big|_{t=s_n} I(\eta^t(u)) = - \|K(\eta^{s_n}(u))-\eta^{s_n}(u)\|_{H^1}^2 \to 0 \qquad \text{as $n \to \infty$.}
\end{equation}
Moreover, by the relative compactness of $\cO(u)$, we may pass to a subsequence, still denoted by $(s_n)_n$, with the property that 
$$
\eta^{s_n}(u) \to u_\infty \qquad \text{in $C^1_0(\Omega)$ as $n \to \infty.$}
$$
From this and (\ref{eq:I-prime-sequence}), we deduce that $K(u_\infty)= u_\infty$, and thus $u_\infty \in C$ is a solution of (\ref{eq:mainproblem}). 
Moreover, by the continuity of the flow $\eta$, we infer that 
$$
\eta^{t_n}(u) = \eta^{t_n-s_n}(\eta^{s_n}(u)) \to u_\infty \qquad \text{as $n \to \infty$.}
$$
The proof is thus finished. 
\end{proof}

\begin{proof}[Proof of Theorem~\ref{prop:boundmorseindex} (completed)]
We first note that
$$
-w < u < w \quad \text{in $\Omega$}\qquad \qquad \text{and}\qquad \qquad \frac{\partial w}{\partial \nu} < \frac{\partial u}{\partial \nu}< -\frac{\partial w}{\partial \nu}
\quad \text{on $\partial \Omega$.}
$$
This follows by applying the strong maximum principle and the Hopf boundary lemma to the functions $u \pm w$. Consequently, we have 
\begin{equation}
  \label{eq:innt-property}
u \in \innt([-w,w]),  
\end{equation}
where $\innt([-w,w])$ denotes the interior of the set $[-w,w]$ in $C^1_0(\Omega)$.

Arguing by contradiction, we now suppose that $m(u) \ge 2$. Then there exists linearly independent eigenfunctions $\phi_1, \phi_2 \in C^1_0(\Omega)$ of the eigenvalue problem
$$
\begin{cases}
-\Delta \phi_i - f'(u) \phi_i= \mu_i \phi_i &\text{in $\Omega$}\\
\phi_i = 0 &\text{on $\partial \Omega$}
\end{cases}
$$
corresponding to eigenvalues $\lambda_1<\lambda_2<0$. Moreover, we may assume that $\phi_1>0$ in $\Omega$. From this we may deduce that 
$$
I''(u)(v,v) < 0 \qquad \text{for every $v \in Y$,}
$$
where $Y$ denotes the span of $\phi_1$ and $\phi_2$. Consequently, for $\rho>0$ sufficiently small we have 
$$
I(z)<I(u) \qquad \text{for every $z \in Y_\rho$,}
$$
where $Y_\rho:= \{u+v :v \in Y \::\: \|v\|_{C^1} = \rho\}$. Moreover, by (\ref{eq:innt-property}) we can make $\rho$ smaller if necessary to guarantee that $Y_\rho \subset [-w,w]$. We now consider the sets
\begin{align*}
Y_\rho^+ &:= \{z \in Y_\rho\::\: \text{$\eta^t(z) \in [0,w]$ for some $t\ge 0$}\},\\ 
Y_\rho^- &:= \{z \in Y_\rho\::\: \text{$\eta^t(z) \in [-w,0]$ for some $t\ge 0$}\}. 
\end{align*}
Take $\varepsilon>0$ small so that $ -w \le u- \eps \phi_1 < u <  u + \eps \phi_1 \le w$ in $\Omega$.
We claim that $z_\pm := u \pm \varepsilon \phi_1 \in Y_\rho^{\pm}$. Indeed, since $z_+ \in [u,w]$, Lemma~\ref{sec:nodal-solut-morse} implies that  
\begin{equation}
  \label{eq:inclusion-flow-special}
\eta^t(z_+) \in [u,w]\qquad \text{for all $t > 0$.}  
\end{equation}
Moreover, by Lemma~\ref{sec:nodal-solut-morse}(iv), there exists a solution $w_+ \in [u,w]$ of (\ref{eq:mainproblem}) 
and a sequence of numbers $t_n \ge 0$ with $t_n \to \infty$ and $\eta^{t_n}(z_+) \to w_+$ in $C^1_0(\Omega)$, which implies that 
$$
I(u) >I(z_+) \ge I(w_+). 
$$
Since $u$ is a least energy sign changing solution of (\ref{eq:mainproblem}), $w_+$ may not change sign, and therefore $w_+$ coincides with $w$, the unique positive solution of (\ref{eq:mainproblem}). Since $w>0$ in $\Omega$ and $\frac{\partial w}{\partial \nu} <0$ on $\partial \Omega$, there exists an open neighborhood of $N$ of $w$ in $C^1_0(\Omega)$ which only contains positive functions. By continuity of the flow $\eta$, we thus infer that there exists $t>0$ with $\eta^t(z_+) \in [u,w] \cap N \subset [0,w]$, and therefore $z_+ \in Y_\rho^+$. In the same way, we see that $z_- \in Y_\rho^-$. Moreover, the sets $Y_\rho^\pm$ are relatively open subsets of $Y_\rho$. Indeed, if $z \in Y_\rho^+$, a similar argument as above shows that $\eta^t(z) \in N$ for some $t>0$. Since $N$ is open and the map $\eta^t(\cdot)$ is continuous, it follows that also $\eta^t(z') \in N \cap [-w,w]  \subset [0,w]$ for $z'$ sufficiently close to $z$ in the $C^1$-norm. Hence $Y_\rho^+$ is open, and similarly we see that $Y_\rho^-$ is open. Next we claim that 
\begin{equation}
  \label{eq:empty-intersection}
Y_\rho^+ \cap Y_\rho^- = \varnothing.
\end{equation}
Suppose by contradiction that $z \in Y_\rho^+ \cap Y_\rho^-$. Since the sets $[0,w]$ and $[-w,0]$ are flow invariant, there must exist $t>0$ with 
$$
\eta^t(z) \in [0,w] \cap [-w,0] = \{0\},
$$
which contradicts the fact that 
$$
I(\eta^t(z)) \le I(z)<I(u)= c_{nod} \le c_{mp}<0= I(0)  
$$
by Theorem~\ref{thm:MP} and Lemma~\ref{sec:nodal-solut-morse}. Hence (\ref{eq:empty-intersection}) is true. Since $Y_\rho$ is a (two-dimensional) circle in $Y$ and therefore connected, it now follows that 
$$
Y_\rho^0:= Y_\rho \setminus (Y_\rho^+ \cup Y_\rho^-) \not = \varnothing.
$$
Let $z \in Y_\rho^0$. By Lemma~\ref{sec:nodal-solut-morse}, we see that 
\begin{equation}
  \label{eq:inclusion-flow-special-1}
\eta^t(z) \in [-w,w] \qquad \text{for all $t > 0$,}  
\end{equation}
and there exists a solution $z_\infty \in C^1_0(\Omega)$ of (\ref{eq:mainproblem}) with $-w \le z_\infty \le w$ and a sequence of numbers $t_n \ge 0$ with $t_n \to \infty$ and $\eta^{t_n}(u) \to z_\infty$ in $C^1_0(\Omega)$,
which again implies that 
$$
I(u) >I(z) \ge I(z_\infty). 
$$
Since $u$ is a least energy sign changing solution of (\ref{eq:mainproblem}), $z_\infty$ may not change sign, and therefore we deduce 
$z_\infty \in \{\pm  w\}$. However, similarly as above, we then would have $\eta^t(z) \in N \cap [-w,w] \subset [0,w]$ or $\eta^t(z) \in -N \cap [-w,w] \subset [-w,0]$ for some $t>0$,  and therefore $z \in Y_\rho^+ \cup Y_\rho^-$. Contradiction. We thus have shown that $m(u) \le 1$, as claimed.
\end{proof}

\begin{theorem}
\label{sec:lens-mp}
Suppose that $f$ satisfies (A1)-(A2)-(A3')-(A4) and suppose there exists a nodal solution $u \in C^1_0(\Omega)$ of \eqref{eq:mainproblem} with $I(u)=c_{nod}$ and $m(u) =1$ (we already know that $m(u) \le 1$).\\
Then $u$ is of mountain pass type. More precisely, there exists a path 
$$
\gamma: [-1,1] \to [-w,w] \subset C^1_0(\Omega) \qquad \text{with}\quad \gamma(\pm 1) = \pm w,\quad \gamma(0)= u
$$
and such that $I \circ \gamma$ is maximized precisely at $0$. In particular, we have 
$$
c_{nod}=c_{mp}
$$
in this case.
\end{theorem}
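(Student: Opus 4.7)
The plan is to exploit the Morse index $1$ hypothesis to construct the descending path explicitly, using the unstable direction at $u$ together with the gradient flow $\eta^t$ from Lemma~\ref{sec:nodal-solut-morse}. Let $\phi_1 \in C^1_0(\Omega)$ be a positive first eigenfunction of the linearised operator $-\Delta - f'(u)$ corresponding to its unique negative eigenvalue $\mu_1$. Since the restriction of $I$ to $C^1_0(\Omega)$ is $C^2$ with $I'(u)=0$ and $I''(u)(\phi_1,\phi_1) = \mu_1 \int_\Omega \phi_1^2 < 0$, a second-order Taylor expansion yields $I(u \pm s \phi_1) < I(u)$ for all $s \in (0, \varepsilon_0]$ once $\varepsilon_0 > 0$ is small enough. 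The strong maximum principle and Hopf lemma applied to $u \pm w$ (as in the proof of Theorem~\ref{prop:boundmorseindex}) place $u$ in the $C^1_0$-interior of $[-w,w]$, so shrinking $\varepsilon_0$ further I arrange $z_\pm := u \pm \varepsilon_0 \phi_1 \in [-w,w]$ with $z_+ \in [u,w]$ and $z_- \in [-w,u]$.

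Next I run the flow $\eta^t$ from $z_\pm$. By Lemma~\ref{sec:nodal-solut-morse}, the orbit $\eta^t(z_+)$ remains in $[u,w]$, the energy $I(\eta^t(z_+))$ is strictly decreasing (since $z_+$ is not a critical point), and any subsequential $C^1_0$-limit $w_+$ as $t\to\infty$ is a solution of \eqref{eq:mainproblem} satisfying $u \le w_+ \le w$ and $I(w_+) \le I(z_+) < c_{nod}$. Such a $w_+$ cannot be nodal by definition of $c_{nod}$; since $w_+ \geq u$ and $u^+ \not\equiv 0$ force $w_+^+ \not\equiv 0$, while $I(w_+) < c_{nod} < 0 = I(0)$ rules out $w_+ \equiv 0$, the strong maximum principle gives $w_+ > 0$ in $\Omega$, and Proposition~\ref{prop:uniqueness} identifies $w_+$ with $w$. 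Consequently the $\omega$-limit set of the orbit is a single point, upgrading subsequential convergence to $\eta^t(z_+) \to w$ in $C^1_0(\Omega)$ as $t \to \infty$; a symmetric argument yields $\eta^t(z_-) \to -w$.

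Finally I assemble a path $\gamma: [-1,1] \to [-w,w] \subset C^1_0(\Omega)$ by concatenation: set $\gamma(t) := u + 2t\varepsilon_0 \phi_1$ for $t \in [-\tfrac12,\tfrac12]$ (a linear segment through $u$ joining $z_-$ to $z_+$); on $[\tfrac12, 1)$ set $\gamma(t) := \eta^{\tau(t)}(z_+)$, where $\tau$ is any continuous bijection $[\tfrac12,1) \to [0,\infty)$ with $\tau(\tfrac12)=0$, and extend by $\gamma(1) := w$; then mirror on $[-1,-\tfrac12]$. Continuity at $t=\pm 1$ is exactly the full-trajectory convergence just established, and by construction $\gamma(\pm 1) = \pm w$, $\gamma(0)=u$, with values in $[-w,w]$. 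On the linear pieces $I \circ \gamma < I(u)$ for $t \neq 0$ by the Taylor estimate; on the flow pieces $I \circ \gamma \le I(z_+) < I(u)$ by Lemma~\ref{sec:nodal-solut-morse}(iii); and $I(\pm w) = m < c_{nod}$ at the endpoints. Thus $\sup_t I(\gamma(t)) = I(u) = c_{nod}$, attained only at $t=0$, giving $c_{mp} \le c_{nod}$; combined with the obvious reverse inequality this yields $c_{nod}=c_{mp}$. The delicate point is the full-trajectory (rather than subsequential) convergence $\eta^t(z_\pm) \to \pm w$, which is needed for continuity of $\gamma$ at the endpoints and relies on the singleton nature of the $\omega$-limit set together with the isolation of $\pm w$ among solutions of energy below $c_{nod}$ in $[-w,w]$.
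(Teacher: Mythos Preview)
Your proof is correct and follows essentially the same approach as the paper: use the negative direction $\phi_1$ at $u$ to build a short linear segment on which $I<I(u)$, then continue by the order-preserving flow $\eta^t$ inside $[u,w]$ (resp.\ $[-w,u]$) until reaching $w$ (resp.\ $-w$). The only cosmetic difference is that the paper argues full-trajectory convergence $\eta^t(z_+)\to w$ by direct contradiction (a sequence staying $\delta$-away would subconverge to $w$), whereas you phrase it via the $\omega$-limit set being the singleton $\{w\}$; these are equivalent formulations of the same argument.
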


\begin{proof}
Let $\phi_1 \in H^1_0(\Omega)$ be the (up to normalization unique) positive Dirichlet eigenfunction of the operator $-\Delta - f'(u)$ corresponding to its lowest eigenvalue, which is negative by assumption. We then fix $\eps \in (0,1)$ and define 
$$
\gamma_0: [-\eps,\eps] \to H^1_0(\Omega),\qquad \gamma_0(s)= u+s \phi_1.
$$
If $\eps>0$ is chosen sufficiently small, we have 
$$
I(\gamma_0(s)) < I(u)=c_{nod} \qquad \text{for $s \in [-\eps,\eps] 
\setminus \{0\}.$}
$$
Moreover, since (\ref{eq:innt-property}) holds for $u$ by the same argument as in the proof of Theorem~\ref{prop:boundmorseindex}, we may also assume that  
$$
-w \le u- \eps \phi_1 < u<  u + \eps \phi_1 \le w \qquad \text{in $\Omega$.}
$$
Let $\eta$ be the flow defined in (\ref{eq:eta_def}). By Lemma~\ref{sec:nodal-solut-morse}, $\eta^t$ is well defined as a map $[-w,w] \to [-w,w]$ for every $t>0$. We claim that 
\begin{equation}
  \label{eq:claim-convergence-w}
\eta^{t}(u + \eps \phi_1) \to w \qquad \text{in $C^1_0(\Omega)$ as $t \to \infty$.}  
\end{equation}
Suppose by contradiction that this is not the case. Then there exists $\delta>0$ and a sequence of numbers $t_n \ge 0$ with $t_n \to + \infty$ and 
\begin{equation}
  \label{eq:claim-convergence-w-contradiction}
\|\eta^{t_n}(u + \eps \phi_1)-w\|_{C^1} \ge \delta \qquad \text{for all $n \in \N$.}
\end{equation}
By Lemma~\ref{sec:nodal-solut-morse}(iv) we have, after passing to a subsequence, $\eta^{t_n}(u+ \eps \phi_1) \to u_\infty$ as $n \to \infty$, where $u_\infty \in [u,w]$ is a solution of (\ref{eq:mainproblem}) and 
$$
I(u_\infty) \le I(u + \eps \phi_1)< I(u)=c_{nod}< 0.
$$
Hence $u_\infty \not= 0$, and $u_\infty$ does not change sign. It then follows by Lemma~\ref{sec:nodal-solut-morse}(ii) that 
$u_\infty = w$, which contradicts (\ref{eq:claim-convergence-w-contradiction}). We thus have proved (\ref{eq:claim-convergence-w}), and in the same way it follows that 
\begin{equation}
  \label{eq:claim-convergence-w-minus}
\eta^{t}(u - \eps \phi_1) \to -w \qquad \text{in $C^1_0(\Omega)$ as $t \to \infty$.}  
\end{equation}
Combining (\ref{eq:claim-convergence-w}) and (\ref{eq:claim-convergence-w-minus}), we may thus define a continuous path $\gamma:[-1,1] \to H^1_0(\Omega)$ by setting
$$
\gamma(t)= \left \{
  \begin{aligned}
 &\gamma_0(t),&&\qquad t \in [-\eps,\eps],\\
 &\eta^{\tau_-(t)}(u + \eps \phi_1),&&\qquad t \in (\eps,1),\\ 
 &w,&& \qquad  t = 1,\\
 &\eta^{\tau_+(t)}(u - \eps \phi_1),&&\qquad t \in (-1,\eps),\\ 
 &-w,&&\qquad  t = -1,
  \end{aligned}
\right.
\qquad\qquad\quad  \text{where}\quad \tau_\pm(t)= \frac{1-\eps}{1\pm t}-1.
$$
By construction, $\gamma$ has the asserted properties.
\end{proof}

In combination with a highly useful result by Aftalion and Pacella \cite{AftalionPacella}, Theorems~\ref{prop:boundmorseindex} and \ref{sec:lens-mp} allow us to derive more information in the case where $\Omega$ is a bounded radial domain, i.e., if $\Omega$ is a ball or an annulus centered at zero. We need to recall the following definition. A function $u$ defined on a
radial domain is said to be {\em foliated Schwarz symmetric with respect to some unit vector $e\in \R^N$,
 $|e|=1$}, if $u(x)$ only depends on $r=|x|$ and $\theta
 =\arccos \bigl(\frac x{|x|}\cdot e\bigr)$, and $u$ is nonincreasing
 in $\theta$.

\begin{theorem}
\label{sec:ball-case}
If $\Omega$ is a bounded radial domain and $f$ satisfies (A1)-(A2)-(A3')-(A4), then we have the following. 
\begin{enumerate}
\item[(i)] $c_{nod}= c_{mp}$.
\item[(ii)] Every nodal solution of \eqref{eq:mainproblem} satisfies $m(u) \ge 1$.
\item[(iii)] Every nodal solution $u$ with $I(u)= c_{nod}$ is of mountain pass type
in the sense of Theorem~\ref{sec:lens-mp}. Moreover, $u$ is nonradial and foliated Schwarz symmetric with respect to some unit vector $e \in \R^N$ and  $u$ is strictly decreasing in the polar angle 
$\theta =\arccos \bigl(\frac x{|x|}\cdot e\bigr)$.
\end{enumerate}
\end{theorem}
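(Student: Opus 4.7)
My plan is to combine the Morse index machinery of Theorems~\ref{prop:boundmorseindex} and~\ref{sec:lens-mp} with the foliated Schwarz symmetry theorem of Pacella--Weth (which, for a radial domain and $f\in C^1$, guarantees foliated Schwarz symmetry of a solution $u$ whenever $m(u)\leq N$). The key intermediate step is (ii). Suppose for contradiction that a bounded nodal solution $u$ has $m(u)=0$. Since $0\leq N$, Pacella--Weth applies and yields $u$ foliated Schwarz symmetric with respect to some $e\in S^{N-1}$. If $u$ is not fully radial, choose coordinates so that $e=e_1$ and set $L:=-\Delta-f'(u)$. The infinitesimal rotation $\phi:=x_1\partial_2 u-x_2\partial_1 u$ satisfies $L\phi=0$ in $\Omega$ (by differentiating $-\Delta(u\circ R_s)=f(u\circ R_s)$ at $s=0$, where $R_s$ is the rotation in the $(x_1,x_2)$-plane) and belongs to $H^1_0(\Omega)$, since $\Omega$ is radial and $\nabla u$ is parallel to the outward normal on $\partial\Omega$. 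By foliated Schwarz symmetry, $u$ is even in $x_2$, hence $\phi$ is odd in $x_2$, hence sign-changing and (since $u$ is non-radial) not identically zero. Since the first eigenfunction of $L$ has constant sign by Krein--Rutman, $\phi$ cannot be a first eigenfunction; combined with $L\phi=0$ this forces $\mu_1(L)<0$, contradicting $m(u)=0$. If $u$ is fully radial, a separate argument via the spherical harmonic decomposition of $L$ and Sturm oscillation in the radial variable (exploiting the interior zero of $u(r)$ and condition (A2)) produces a test function with negative $L$-quadratic form, again contradicting $m(u)=0$.

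With (ii) in hand, (i) and (iii) follow rapidly. By Theorem~\ref{thm:lensachieved}, $c_{nod}$ is achieved by some bounded nodal solution $u$. Theorem~\ref{prop:boundmorseindex} gives $m(u)\leq 1$ and (ii) gives $m(u)\geq 1$, so $m(u)=1$. Theorem~\ref{sec:lens-mp} then produces a path from $-w$ to $w$ through $u$ on which $I$ is maximized precisely at $u$, yielding $c_{mp}\leq I(u)=c_{nod}$; combined with $c_{nod}\leq c_{mp}$, this proves (i). For (iii), the same reasoning applied to any nodal $u$ with $I(u)=c_{nod}$ gives $m(u)=1$ together with the mountain pass structure of Theorem~\ref{sec:lens-mp}. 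Since $m(u)=1\leq N$, Pacella--Weth yields foliated Schwarz symmetry with respect to some $e\in S^{N-1}$. If $u$ were fully radial, the $\ell=1$ spherical harmonic contributions to $m(u)$, combined with the $\ell=0$ contribution from the nodal structure of $u(r)$, would force $m(u)>1$, a contradiction; hence $u$ is non-radial. Strict monotonicity in the polar angle $\theta$ is then obtained from the strong maximum principle applied to the nonnegative difference $u(r,\theta_1)-u(r,\theta_2)$ (for $\theta_1<\theta_2$), which satisfies a linear elliptic equation on the half-domain where $\theta\in(\theta_1,\theta_2)$.

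The hardest part is the radial case of (ii): without a tangential symmetry direction to exploit, one must produce a negative direction of the quadratic form of $L$ directly, balancing the gradient energy against the concentration of $f'(u)$ near the nodal set (where $f'(u)$ is close to $f'(0)>\lambda_2(\Omega)$ by (A3')). A related delicate point in (iii) is the exclusion of fully radial least energy nodal solutions, which requires a careful spherical harmonic count of the Morse index.
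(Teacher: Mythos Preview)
Your overall scheme---prove (ii), then combine Theorems~\ref{prop:boundmorseindex} and~\ref{sec:lens-mp} to get $m(u)=1$, $c_{nod}=c_{mp}$, and the mountain-pass structure---is exactly the paper's, but the execution differs in several places.

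For (ii) the paper does not invoke Pacella--Weth at all. It simply takes the angular derivative $u_\vartheta$ in any coordinate two-plane: since $u$ is $2\pi$-periodic in $\vartheta$, either $u_\vartheta\equiv 0$ or it changes sign, and in the latter case it is a sign-changing Dirichlet eigenfunction of $-\Delta-f'(u)$ at eigenvalue $0$, forcing $\mu_1<0$ and contradicting $m(u)=0$. Doing this for every two-plane forces $u$ to be radial, and then the paper cites \cite[Theorem~1.1]{AftalionPacella}, which gives $m(u)\ge N+1$ for any radial sign-changing solution on a ball or annulus. What you flag as ``the hardest part'' is thus a one-line citation requiring neither (A2) nor (A3'); your Sturm-oscillation sketch is both incomplete and unnecessary. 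The same Aftalion--Pacella citation gives nonradiality in (iii) immediately, since a radial least energy nodal solution would have $m(u)\ge N+1>1$.

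For foliated Schwarz symmetry in (iii) you cite Pacella--Weth, which is a legitimate shortcut since $m(u)=1\le N$. The paper takes a different, self-contained route that actually uses the mountain-pass characterization: it polarizes the entire path $\gamma$ of Theorem~\ref{sec:lens-mp} with respect to each admissible half-space $H$, observes that the polarized path still joins $\pm w$ with maximum $c_{nod}$ only at $t=0$, and then uses the gradient flow of Lemma~\ref{sec:nodal-solut-morse} to deduce that the polarization $u_H$ is itself a solution; the strong maximum principle applied to $u_H-u\ge 0$ on $H\cap\Omega$ (noting $u_H=u$ at a chosen maximum point) then yields $u=u_H$. Finally, your strict-monotonicity argument via the difference $u(r,\theta_1)-u(r,\theta_2)$ is not right as stated: this quantity depends only on $r$ and does not satisfy a useful elliptic equation on a half-domain. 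The paper instead applies the strong maximum principle to the angular derivative $u_\vartheta$, which solves $-\Delta u_\vartheta=f'(u)u_\vartheta$ in $\Omega$, vanishes on $\partial\Omega$, and is $\le 0$ on the half-domain $\{x_2>0\}$ by the foliated Schwarz symmetry already established; hence $u_\vartheta<0$ there, the alternative $u_\vartheta\equiv 0$ being excluded by nonradiality.
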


\begin{proof}
We start by proving (ii). Suppose by contradiction that there exists a nodal solution $u$ of \eqref{eq:mainproblem} with $m(u)=0$. We consider cylinder coordinates in $\Omega$, 
replacing $x_1$, $x_2$ by polar coordinates
$r,\vartheta$ with $x_1= r \cos \vartheta$,
$x_2= r \sin \vartheta$, and leaving \mbox{$x':=(x_3,\dots,x_N)$} unchanged.
In these coordinates, we then have that
$$
\Delta u = \frac{\partial^2 u}{\partial r^2}+\frac{1}{r}
\frac{\partial u}{\partial r}+\frac{1}{r^2}\frac{\partial^2 u}{\partial \vartheta^2}
+\sum_{i=3}^{N}\frac{\partial^2 u}{\partial x_i^2}.
$$
We now consider the angular derivative $u_\vartheta = \frac{\partial u}{\partial \vartheta}: \overline \Omega \to \R$, and we claim that 
\begin{equation}
\label{angle-deriv-zero}
u_\vartheta \equiv 0.   
\end{equation}
Differentiating the equation $-\Delta u= f(u)$ with respect to $\vartheta$, we find that $u_\vartheta$ satisfies
\begin{equation}
  \label{eq:linearized-angle}
\left\{
  \begin{aligned}
-\Delta u_\vartheta&= f'(u)u_\vartheta &&\qquad \text{in $\Omega$,}\\
u_\vartheta & = 0 &&\qquad \text{on $\partial \Omega$.}    
  \end{aligned}
\right.
\end{equation}
Suppose by contradiction that $u_\vartheta \not \equiv 0$. Then $u_\vartheta$ is a Dirichlet eigenfunction of $-\Delta- f'(u)$ corresponding to the eigenvalue $0$. Moreover, since $u$ is $2\pi$-periodic in $\vartheta$, $u_\theta$ changes sign. This implies that the first Dirichlet eigenvalue of $-\Delta - f'(u)$ is negative, which contradicts our assumption that $m(u)=0$. 

Hence \eqref{angle-deriv-zero} is satisfied, and it also holds true for an arbitrary rotation $\tilde u$ of the function $u$ since $\tilde u$ satisfies the same assumptions as $u$. Consequently, $u$ is a radial sign changing solution of \eqref{eq:mainproblem}. However, then \cite[Theorem 1.1]{AftalionPacella} implies that $m(u) \ge N+1>0$, which is a contradiction. We thus conclude that $m(u) \ge 1$ for every nodal solution $u$ of \eqref{eq:mainproblem}, as claimed in (ii).\\
Next, let $u \in C^1_0(\Omega)$ be a nodal solution with $I(u)= c_{nod}$ (such a solution exists by Theorem~\ref{thm:lensachieved}). By (ii) and Theorem~\ref{prop:boundmorseindex}, we have $m(u)=1$, so (i) follows by Theorem~\ref{sec:lens-mp}. It also follows from Theorem~\ref{sec:lens-mp} that $u$ is of mountain pass type. Moreover, by \cite[Theorem 1.1]{AftalionPacella}, $u$ is nonradial. 

Next we prove that $u$ is foliated Schwarz symmetric. 
Let $x_0 \in \Omega \setminus \{0\}$ be chosen such that $u(x_0)=\max \{u(x)\::\: |x|=|x_0|\}$. We put $e:=\frac{x_0}{|x_0|}$, and we consider the family $\cH_e$ of all open halfspaces $H$ in
$\R^N$ with $e \in H$ and $0 \in \partial H$. For $H \in \cH_e$ we let $\sigma_H: \R^N \to \R^N$ denote the reflection 
with respect to the hyperplane $\partial H$. We claim the following:
\begin{equation}
  \label{eq:7}
\text{For every $H \in \cH_e$, we have $u \ge u \circ \sigma_H$ on $H \cap \Omega$.} 
\end{equation}
To prove this, we fix $H \in \cH_e$ and recall that the  {\em polarization} of $u$ with respect to $H$ is defined by
$$
u_H(x)=
\begin{cases}
 &\max \{u(x),u(\sigma_H(x))\},\qquad x\in \Omega \cap H\\
 &\min \{u(x),u(\sigma_H(x))\},
\qquad x\in \Omega \setminus H.
\end{cases}
$$
It is well known (see e.g. \cite{BartschWethWillem}) that
\begin{equation}
  \label{eq:equimeasurable}
\int_{\Omega} |\nabla u_H|^2\:dx = \int_{\Omega} |\nabla
u|^2\:dx \qquad \text{and}\qquad \int_{\Omega} F(u_H)\:dx = \int_{\Omega} F(u)\:dx. \end{equation}
By Theorem~\ref{sec:ball-case}(iii), $u$ is of mountain pass type, so there exists a path 
$$
\gamma: [-1,1] \to [-w,w] \subset C^1_0(\Omega) \qquad \text{with}\quad \gamma(\pm 1) = \pm w,\quad \gamma(0)= u
$$
and such that $I \circ \gamma$ is maximized precisely at $0$ with $I(\gamma(0))=I(u)=c_{nod}$. We then consider the {\em polarized path} 
$$
\gamma_*: [-1,1] \to H^1_0(\Omega) \quad \text{defined by}\quad  \gamma_*(t)=[\gamma(t)]_H.
$$
Since $\pm w$ are radial functions, $\gamma_*$ inherits the property that 
$\gamma_*(\pm 1) = \pm w$. Moreover, by \eqref{eq:equimeasurable}, $I \circ \gamma_*$ is also maximized precisely at $0$ with $I(\gamma_*(0))=c_{nod}$. From this we now deduce that 
\begin{equation}
  \label{eq:critical-point-path}
\text{$u_H= \gamma_*(0)$ is a solution of (\ref{eq:mainproblem}).}
\end{equation}
Indeed, suppose by contradiction that this is not the case, and consider again the flow defined in (\ref{eq:eta_def}). We recall that, by Lemma~\ref{sec:nodal-solut-morse}, $\eta^t$ is well defined as a map $[-w,w] \to [-w,w]$ for every $t>0$.
For fixed $\eps>0$, we then consider the path 
$$
\gamma_\eps: [-1,1] \to H^1_0(\Omega) \quad \text{defined by}\quad  \gamma_\eps(s)=\eta^\eps(\gamma_*(s)),
$$
which then also satisfies $\gamma_\eps(\pm 1) = \pm w.$ By definition of $c_{mp}$ and Theorem~\ref{sec:ball-case}(i), we have that 
$$
\max_{s \in [-1,1]} I(\gamma_\eps(s)) \ge c_{mp}=c_{nod}.
$$
Let $s_0 \in [-1,1]$ be chosen such that $I(\gamma_\eps(s_0)) \ge c_{nod}$, which implies that 
\begin{equation}
  \label{eq:ineq-equ-nod}
I(\gamma_*(s_0)) \ge I(\gamma_\eps(s_0)) \ge c_{nod}.
\end{equation}
Since  $I \circ \gamma_*$ is maximized precisely at $0$ with 
$I \circ \gamma_*(0)=c_{nod}$, it then follows that $s_0=0$, and equality holds in \eqref{eq:ineq-equ-nod}. From this we deduce -- by definition of $\gamma_*$ and $\gamma_\eps$ -- that $I(\eta^\eps(u_H))= I(u_H)$. Consequently, by Lemma~\ref{sec:nodal-solut-morse}(iii), $u_H$ is a solution of (\ref{eq:mainproblem}), as claimed in \eqref{eq:critical-point-path}.

We thus infer that both $u$ and $u_H$ are
solutions of \eqref{eq:mainproblem}. Therefore $z:=u_H-u$ is a
nonnegative function in $\Omega \cap H$ satisfying 
$$
-\Delta z= V(x)z \qquad \text{in
    $H \cap \Omega$}
$$
with $V \in L^\infty(\Omega)$ defined by $V(x)=\int_{0}^1 f'(u(x)+t z(x))\,dt$.   
The strong maximum principle then implies that either $z \equiv 0$ or $z>0$ in $H \cap \Omega$. The
second case is impossible since $x_0 \in H \cap \Omega$ and
$z(x_0)=u_H(x_0)-u(x_0)=0$ by the choice of $x_0$. We therefore conclude that
$z \equiv 0$, hence $u=u_H$ and \eqref{eq:7} holds.\\
By continuity, \eqref{eq:7} implies that $u$ is symmetric with respect to every
hyperplane containing $e$, so it is axially symmetric with respect
to the axis $e\R$. Thus $u(x)$ depends only on $r=|x|$ and $\theta
 =\arccos \left(\frac x{|x|}\cdot e\right)$. Moreover, it also
 follows from \eqref{eq:7} that $u$ is nonincreasing
 in the polar angle $\theta$. We thus conclude that $u$ is foliated Schwarz symmetric.

Finally, to show that $u$ is strictly decreasing in the polar angle, we pass to cylinder coordinates again. For this we assume, without loss of generality, that $u$ is foliated Schwarz symmetric with respect to $e=e_1$, the first coordinate vector. Replacing $x_1$, $x_2$ by polar coordinates
$r,\vartheta$ with $x_1= r \cos \vartheta$,
$x_2= r \sin \vartheta$, we then deduce from the foliated Schwarz symmetry that the angular derivative $u_\vartheta = \frac{\partial u}{\partial \vartheta}$ satisfies $u_\vartheta \le 0$ in the half domain $H_\Omega:= \{x \in \Omega\::\: x_2>0\}$ and $u_\vartheta \ge 0$ in $-H_\Omega$. Moreover, $u_\vartheta$ solves the Dirichlet problem \eqref{eq:linearized-angle}. By the strong maximum principle, it follows that either $u_\vartheta \equiv 0$ in $H_\Omega$ or $u_\vartheta < 0$ in $H_\Omega$. If $u_\vartheta \equiv 0$ in $H_\Omega$, it follows from the axial symmetry of $u$ with respect to $e_1$ that $u$ is a radial function, which contradicts what we have already proved. Hence $u_\vartheta < 0$ in $H_\Omega$, and again it follows from the axial symmetry that $u$ 
is strictly decreasing in the polar angle 
$\theta =\arccos \bigl(\frac x{|x|}\cdot e_1\bigr)$.

The proof is finished.
\end{proof}

\section{The Allen-Cahn equation} \label{sec:allencahn}
In this section we provide additional information in the particular case of the Allen-Cahn equation:
\begin{equation}\label{eq:AC}
\begin{cases}
-\Delta u=\lambda(u-|u|^{p-1}u)\\
 u\in H^1_0(\Omega)
 \end{cases}\qquad (p>1).
\end{equation}

From the previous sections, together with some additional standard computations, we have the following information about positive and sign-changing bounded solutions (see for example \cite{Berestycki1981, Berger1979}):

\begin{enumerate}[(a)]
\item any bounded solution $u$ of problem \eqref{eq:AC} satisfies $\|u\|_\infty\leq 1$;
\item there are no nontrivial bounded solutions for $0\leq \lambda\leq \lambda_1(\Omega)$;
\item given $\lambda>\lambda_1(\Omega)$, there exists a \emph{unique} bounded positive solution, which we denote by $w_\lambda$. 
\item for $ \lambda\leq \lambda_2(\Omega)$, there are no bounded sign-changing solutions.
\item for $\lambda>\lambda_2(\Omega)$ there exists at least one pair $(u,-u)$ of bounded sign-changing solutions.
\end{enumerate}

For each $\lambda>\lambda_2(\Omega)$, denote by $c_{nod}^\lambda$ and $c_{mp}^\lambda$ respectively the least energy nodal level \eqref{eq:cnod} and the mountain pass level \eqref{eq:cmp} associated to problem \eqref{eq:AC}. Both levels are achieved; if $\Omega=B_1$, then they coincide (recall Theorem \ref{sec:ball-case}), while there is a dumbbell-type domain domain where $c_{nod}^\lambda<c_{mp}^\lambda$ and there are at least two pairs $(u,-u)$ of sign-changing solutions (Theorem \ref{thm:cmp_neqc_nod})).

Here we will provide further information for $\lambda\sim \lambda_2(\Omega)$: in particular, if $|\lambda-\lambda_2(\Omega)|$ is sufficiently small (by a quantity depending on the domain), then $c_{nod}^\lambda=c_{mp}^\lambda$. After that we focus on the structure of the set of nodal solutions: in the next subsection we treat the particular case when $\Omega$ is either a ball or an annulus in $\R^N$, while the last subsection deals with the case of $\Omega$ being a square in dimension two.

Let us start by a general standard lemma.

\begin{lemma}\label{lemma:convergence} Let $\{u_\lambda\}\subset H^1_0(\Omega)\cap L^\infty(\Omega)$ be a family of sign-changing solutions to \eqref{eq:AC} for $\lambda\sim \lambda_2(\Omega)$. Then $u_\lambda \to 0$ in $C^{1,\alpha}(\overline{\Omega})$ as $\lambda\to \lambda_2(\Omega)$.
\end{lemma}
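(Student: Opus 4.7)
The plan is: (i) obtain a uniform $C^{1,\alpha}(\overline\Omega)$ bound on $\{u_\lambda\}$ from property~(a) and elliptic regularity; (ii) extract any subsequential limit as $\lambda\to\lambda_2(\Omega)$ and identify it via the limiting equation; (iii) use properties~(c)--(d) together with a Hopf-type argument, as at the end of the proof of Theorem~\ref{thm:lensachieved}, to rule out nonzero limits.

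For step~(i), since $\|u_\lambda\|_\infty\leq 1$ by property~(a), the right-hand side $\lambda(u_\lambda-|u_\lambda|^{p-1}u_\lambda)$ is uniformly bounded in $L^\infty(\Omega)$ for $\lambda$ in a neighborhood of $\lambda_2(\Omega)$. Since $\Omega\in C^{1,1}$, $L^q$-elliptic regularity and Sobolev embedding then give a uniform $C^{1,\alpha}(\overline\Omega)$ bound on the family for some $\alpha\in(0,1)$. For step~(ii), by Arzel{\`a}--Ascoli, any sequence $\lambda_n\to\lambda_2(\Omega)$ admits a subsequence along which $u_{\lambda_n}\to u$ in $C^{1,\alpha'}(\overline\Omega)$ for every $\alpha'<\alpha$, and $u$ is a bounded solution of $-\Delta u=\lambda_2(\Omega)(u-|u|^{p-1}u)$ in $\Omega$ with $u=0$ on $\partial\Omega$.

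For step~(iii), assume by contradiction that $u_\lambda\not\to 0$ in $C^{1,\alpha'}(\overline\Omega)$. Then one may extract a subsequence whose limit $u$ is nonzero. By property~(d), $u$ does not change sign; up to replacing $u_{\lambda_n}$ by $-u_{\lambda_n}$ (the equation is odd), we may assume $u\geq 0$, so property~(c) forces $u=w_{\lambda_2(\Omega)}>0$ in $\Omega$. Hopf's lemma gives $\frac{\partial u}{\partial\nu}<0$ on $\partial\Omega$, and the $C^1(\overline\Omega)$-convergence $u_{\lambda_n}\to u$ produces constants $\varepsilon,\tilde\varepsilon,\delta_0>0$ such that, for all large $n$,
\[
|\nabla u_{\lambda_n}|\geq \varepsilon \ \text{on } \{x\in\Omega: d(x,\partial\Omega)\leq 2\delta_0\}, \qquad u_{\lambda_n}\geq \tilde\varepsilon \ \text{on } \{x\in\Omega: d(x,\partial\Omega)\geq 2\delta_0\}.
\]
Since $u_{\lambda_n}$ is sign-changing and vanishes on $\partial\Omega$, it must attain a negative interior minimum at some $x_n$ with $\nabla u_{\lambda_n}(x_n)=0$; the first bound forces $d(x_n,\partial\Omega)>2\delta_0$, and then the second bound gives $u_{\lambda_n}(x_n)\geq\tilde\varepsilon>0$, a contradiction. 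The delicate step is precisely this Hopf-type argument: it is the mechanism that prevents sign-changing solutions from accumulating on the unique positive solution at the bifurcation parameter, and it mirrors verbatim the final paragraph of the proof of Theorem~\ref{thm:lensachieved}.
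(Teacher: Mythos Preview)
Your proof is correct and follows essentially the same route as the paper: the uniform $L^\infty$ bound from property~(a) gives $C^{1,\alpha}$ compactness, any subsequential limit solves the equation at $\lambda_2(\Omega)$ and hence does not change sign by property~(d), and the Hopf argument from the end of the proof of Theorem~\ref{thm:lensachieved} rules out the possibilities $\pm w_{\lambda_2(\Omega)}$. The paper's own proof is a two-line compression of exactly what you wrote.
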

\begin{proof}
Since $-1\leq u_\lambda\leq 1$, we immediately have uniform bounds in $L^\infty(\Omega)$--norm for $\Delta u_\lambda$, and so $u_\lambda\to u$ in $C^{1,\alpha}(\overline \Omega)$ (up to a subsequence), with $-\Delta u=\lambda_2(u-|u|^{p-1}u)$. Thus $u$ is signed, and reasoning as in the final part of the proof of Theorem \ref{thm:lensachieved}, we must have $u\equiv 0$.
\end{proof}

\begin{theorem}\label{thm:Morselambdasmall}
There exists $\eps=\eps(\Omega)>0$ such that for $\lambda\in (\lambda_2(\Omega),\lambda_2(\Omega)+\eps)$ and $u$ any sign-changing solution $u$ satisfies 
\[
m(u) \geq 1.
\]
In particular if $u$ achieves $c_{nod}^\lambda$ then $m(u)=1$ and  
\[
c_{nod}^\lambda=c^\lambda_{mp}\qquad \text{ for } \lambda\in (\lambda_2(\Omega),\lambda_2(\Omega)+\eps).
\]
\end{theorem}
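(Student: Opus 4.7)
The plan is to reduce the statement to the Morse index machinery of Section~\ref{sec:morseindex}. By Theorem~\ref{prop:boundmorseindex} every bounded nodal solution $u$ with $I(u) = c_{nod}^\lambda$ already satisfies $m(u) \le 1$, so once we establish the lower bound $m(u) \ge 1$ for all sign-changing solutions (not just minimizers) in a right-neighborhood of $\lambda_2(\Omega)$, we automatically get $m(u) = 1$ for any minimizer, and then Theorem~\ref{sec:lens-mp} will give $c_{nod}^\lambda = c_{mp}^\lambda$. Hence the entire theorem reduces to proving the \emph{uniform} bound $m(u) \ge 1$ for all sign-changing solutions when $\lambda - \lambda_2(\Omega)$ is small.

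To prove this uniform bound I would argue by contradiction: assume there is a sequence $\lambda_n \to \lambda_2(\Omega)^+$ and sign-changing solutions $u_n$ of \eqref{eq:AC} with $\lambda = \lambda_n$ such that $m(u_n) = 0$. By Lemma~\ref{lemma:convergence}, $u_n \to 0$ in $C^{1,\alpha}(\overline{\Omega})$, and in particular $\|u_n\|_\infty \to 0$. Since $p > 1$, the linearization potential
\[
f'_{\lambda_n}(u_n) = \lambda_n \bigl( 1 - p |u_n|^{p-1} \bigr)
\]
converges to $\lambda_2(\Omega)$ uniformly on $\overline{\Omega}$.

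The assumption $m(u_n) = 0$ means that the quadratic form of $-\Delta - f'_{\lambda_n}(u_n)$ is nonnegative on $H^1_0(\Omega)$. Testing this with the first Dirichlet eigenfunction $\varphi_1$ of $-\Delta$ gives
\[
\lambda_1(\Omega) \int_\Omega \varphi_1^2 \,dx \;=\; \int_\Omega |\nabla \varphi_1|^2 \,dx \;\ge\; \int_\Omega f'_{\lambda_n}(u_n) \varphi_1^2 \,dx.
\]
Letting $n \to \infty$, the uniform convergence $f'_{\lambda_n}(u_n) \to \lambda_2(\Omega)$ together with $\varphi_1 \in L^2(\Omega)$ yields
\[
\lambda_1(\Omega) \int_\Omega \varphi_1^2 \,dx \;\ge\; \lambda_2(\Omega) \int_\Omega \varphi_1^2 \,dx,
\]
which contradicts $\lambda_1(\Omega) < \lambda_2(\Omega)$ since $\varphi_1 > 0$. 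This establishes the existence of $\eps = \eps(\Omega) > 0$ such that $m(u) \ge 1$ for every sign-changing solution whenever $\lambda \in (\lambda_2(\Omega), \lambda_2(\Omega) + \eps)$, and combined with Theorems~\ref{prop:boundmorseindex} and \ref{sec:lens-mp} this proves both remaining claims.

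No serious obstacle is expected: the only subtle point is ensuring the uniform convergence of $f'_{\lambda_n}(u_n)$, which relies critically on $p > 1$ (so that $|u_n|^{p-1} \to 0$ in $L^\infty$ from the $C^{1,\alpha}$ convergence of $u_n$ to $0$). For $p = 1$ or $p < 1$ the argument would break down, but that regime is outside the hypotheses of \eqref{eq:AC}.
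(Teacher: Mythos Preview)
Your proof is correct and follows essentially the same approach as the paper: both test the linearized quadratic form at the first Dirichlet eigenfunction $\varphi_1$, use Lemma~\ref{lemma:convergence} to get $|u_\lambda|^{p-1}\to 0$ uniformly, and conclude that $Q_{u_\lambda}(\varphi_1)\to\lambda_1(\Omega)-\lambda_2(\Omega)<0$, then invoke Theorems~\ref{prop:boundmorseindex} and~\ref{sec:lens-mp}. The only cosmetic difference is that you phrase the argument by contradiction (extracting a sequence with $m(u_n)=0$) whereas the paper computes the limit of $Q_{u_\lambda}(\varphi_1)$ directly; the mathematical content is the same.
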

\begin{proof}
Take a family of sign-changing solutions $\{u_\lambda\}$ for $\lambda\sim \lambda_2(\Omega)$. From the previous lemma, we know that $u_\lambda\to 0$ in $C^{1,\alpha}(\overline \Omega)$ as $\lambda\to \lambda_2(\Omega)^+$.  Define
\[
Q_{u_\lambda}(v):=\int_\Omega |\nabla v|^2-\lambda \int_\Omega v^2 + p \lambda \int_\Omega |u_\lambda|^{p-1}v^2.
\]
and let $\varphi_1$ be the positive $L^2$-normalized first eigenfunction of $(\Delta,H^1_0(\Omega))$. Then
\[
Q_{u_\lambda}(\varphi_1)=\lambda_1(\Omega)-\lambda + p\lambda \int_\Omega \vphi_1^2 |u_\lambda|^{p-1} \to \lambda_1(\Omega)-\lambda_2(\Omega)<0.
\]
Therefore $m(u_\lambda)\geq 1$ for $\lambda\sim \lambda_2(\Omega)$. This combined with Theorem \ref{prop:boundmorseindex} yields that $m(u_\lambda)=1$ if $u_\lambda$ is a least energy nodal solution. From Theorem \ref{sec:lens-mp} we conclude that $u_\lambda$ is of mountain-pass type, and $c_{nod}^\lambda=c_{mp}^\lambda$.
\end{proof}

\begin{remark}
Observe that there is no contradiction between Theorem \ref{thm:Morselambdasmall} and Theorem \ref{thm:cmp_neqc_nod}. While the latter applied to \ref{eq:AC} states that, given $\lambda>0$, there exists a domain $\Omega$ such that $c_{nod}^\lambda<c_{mp}^\lambda$, the former theorem states that, given $\Omega$, $c_{nod}^\lambda=c_{mp}^\lambda$ for $\lambda\sim \lambda_2(\Omega)$.
\end{remark}

\begin{remark}
We have proved that, for $\lambda\sim \lambda_2(\Omega)$,  $\vphi_1$ is a direction of negativity for the associated functional. We have another direction of negativity, which is $w_\lambda$ (the unique positive solution):
\begin{align*}
Q_{u_\lambda}(w_\lambda)&=\int_\Omega |\nabla w_\lambda|^2-\lambda\int_\Omega w_\lambda^2 +p\lambda \int_\Omega w_\lambda^2 |u_\lambda|^{p-1}\\
			&\to \int_\Omega |\nabla w_{\lambda_2}|^2-\lambda_2 \int_\Omega w_{\lambda_2}^2=-\lambda_2 \int_\Omega w_{\lambda_2}^p<0
			\end{align*}
			as $\lambda\to \lambda_2(\Omega)$.
\end{remark}

\subsection{The case of a bounded radial domain in any dimension, $\lambda\sim \lambda_2(\Omega)$} 
In this section, $\Omega$ is either a ball or an annulus centered at the origin in $\R^N$, $N\geq 2$. 

Let $E_2$ denote the eigenspace associated to the second eigenvalue in $\Omega$,  i.e.,
\[
E_2=\{u\in H^1_0(\Omega):\ -\Delta u=\lambda_2 u\text{ in } \Omega\},
\]
and let $P_2:H^1_0(\Omega)\to E_2$ denote the projection map.
We start by characterizing the elements of $E_2$.

Given a direction $e\in \mathbb{S}^{N-1}$, we write
\[
\Omega=\Omega_e^+\cup \Omega_e^- \cup \Gamma_e,
\]
with
\[
\Omega^\pm_e=\{x\in \Omega:\ \pm x\cdot e>0\},\qquad \Gamma_e=\{x\in \Omega:\ x\cdot e=0\}.
\]
\begin{lemma}\label{lemma=E2}
Let $\Omega$ be either a ball or an annulus centered at the origin in $\R^N$, $N\geq 2$. Let $u\not \equiv 0$ be an element of $E_2$. Then $u$ is odd symmetric with respect to a half space $\{x\cdot e=0\}$ for some $e\in \mathbb{S}^{N-1}$, axially symmetric with respect to $\R e$, and moreover $\{u=0\}=\Gamma_e$. In particular, $\lambda_2(\Omega)=\lambda_1(\Omega_e^+)=\lambda_1(\Omega_e^-)$.
\end{lemma}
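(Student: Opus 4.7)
The plan is to show that every nontrivial $u \in E_2$ has the form $u(x)=R(|x|)(x\cdot e)$ for some $e\in\mathbb{S}^{N-1}$ and some nonzero radial factor $R$; from this, oddness across $\Gamma_e$, axial symmetry about $\R e$, the nodal set $\{u=0\}=\Gamma_e$, and $\lambda_2(\Omega)=\lambda_1(\Omega_e^\pm)$ all follow by direct inspection.

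As a preparatory step I would prove $\lambda_2(\Omega) = \lambda_1(\Omega_e^+) = \lambda_1(\Omega_e^-)$ for every unit vector $e$. The equality of the two half-domain eigenvalues is immediate from the reflective symmetry of $\Omega$ across $\Gamma_e$, and $\lambda_2(\Omega)\le\lambda_1(\Omega_e^+)$ follows from the first characterization in \eqref{eq:characterizations_lambda2} with $\omega=\Omega_e^+$. For the reverse inequality, let $\phi_e$ denote the positive first Dirichlet eigenfunction on $\Omega_e^+$ and extend it by odd reflection across $\Gamma_e$ to a function $u_e$ on $\Omega$. A standard integration-by-parts computation shows $u_e\in H^1_0(\Omega)$ is a weak eigenfunction of $-\Delta$ with eigenvalue $\lambda_1(\Omega_e^+)$, because the normal-derivative boundary contributions on $\Gamma_e$ coming from $\Omega_e^+$ and $\Omega_e^-$ cancel; since $u_e$ changes sign, $\lambda_1(\Omega_e^+)\ge\lambda_2(\Omega)$.

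For any $u\in E_2\setminus\{0\}$ I would then decompose it via spherical harmonics, $u(r,\theta)=\sum_{k\ge 0} u_k(r,\theta)$, with $u_k$ lying in the degree-$k$ isotypic subspace. Since $E_2$ is rotation-invariant and projection onto the degree-$k$ component is a rotation-averaging operation, each $u_k$ still belongs to $E_2$. Hence every nontrivial radial coefficient of $u_k$ satisfies the Sturm--Liouville problem
\[
-R''-\tfrac{N-1}{r}R'+\tfrac{k(k+N-2)}{r^2}R=\lambda_2(\Omega)\,R
\]
on the radial interval defining $\Omega$ with Dirichlet boundary conditions, so $\lambda_2(\Omega)=\mu_{n,k}$ for some $n\ge 1$, where $\mu_{n,k}$ denotes the $n$-th eigenvalue of this ODE. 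Strict monotonicity of the centrifugal potential $k(k+N-2)/r^2$ in $k$ gives $\mu_{n,k}$ strictly increasing in $k$ for each $n$, and a direct Rayleigh-quotient test with a degree-$1$ element yields $\mu_{1,1}\le\lambda_2(\Omega)$; combining, $\mu_{n,k}>\mu_{1,1}\ge\lambda_2(\Omega)$ for $k\ge 2$, so $u_k\equiv 0$ for $k\ge 2$.

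The main obstacle is to rule out the purely radial component $u_0$, i.e., to show $\mu_{n,0}\ne\lambda_2(\Omega)$ for $n\ge 2$ (noting that $\mu_{1,0}=\lambda_1(\Omega)<\lambda_2(\Omega)$ already excludes $n=1$). This reduces to the classical inequality $\mu_{2,0}>\mu_{1,1}$ for balls and annuli, which I would prove either via a Faber--Krahn rearrangement applied to the inner nodal ball of the second radial eigenfunction, or by a Rayleigh-quotient comparison using the first radial eigenfunction as a transplantation weight in a degree-$1$ test function. Once $u_0\equiv 0$, only the degree-$1$ part survives; since $\mu_{1,1}$ is a simple eigenvalue of the $k=1$ ODE, all $N$ radial coefficients in $u_1$ are proportional to a single first eigenfunction $R_1\not\equiv 0$, which does not vanish on the open radial interval. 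This yields $u(x)=R_1(|x|)(x\cdot c)/|x|$ for some $c\in\R^N\setminus\{0\}$, and setting $e:=c/|c|$ and $R(r):=|c|R_1(r)/r$ gives the claimed form.
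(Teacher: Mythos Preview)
Your spherical-harmonic approach is genuinely different from the paper's. The paper cites the foliated Schwarz symmetry of second eigenfunctions from \cite{BartschWethWillem} to obtain axial symmetry, then \cite[Proposition~2.1]{GrumiauTroestler} to upgrade this to oddness across $\Gamma_e$; the identification $\{u=0\}=\Gamma_e$ and $\lambda_2(\Omega)=\lambda_1(\Omega_e^\pm)$ are then derived by the same eigenvalue argument you give in your last paragraph. Your route avoids these external symmetry results and yields the explicit form $u(x)=R(|x|)(x\cdot e)$ directly, at the cost of needing eigenvalue-ordering facts for the radial ODEs.

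Two points need repair. First, a slip in the $k\ge 2$ step: you write $\mu_{1,1}\le\lambda_2(\Omega)$ but in the very next clause use $\mu_{1,1}\ge\lambda_2(\Omega)$. The inequality you actually need is $\ge$, and it follows at once since $R_1(|x|)\,x_1/|x|$ is a sign-changing Dirichlet eigenfunction of $\Omega$ with eigenvalue $\mu_{1,1}$ (in fact $\mu_{1,1}=\lambda_1(\Omega_{e_1}^+)=\lambda_2(\Omega)$ by your preparatory step, since this function is positive on the half-domain). Second, and more substantively, neither method you propose establishes the key inequality $\mu_{2,0}>\mu_{1,1}$. A Faber--Krahn argument on the inner nodal ball gives a lower bound on $\mu_{2,0}$ in terms of the first eigenvalue of a ball, but to compare you would need an \emph{upper} bound on $\mu_{1,1}=\lambda_1(\Omega_e^+)$, and Faber--Krahn goes the wrong way for that; moreover, for the annulus there is no inner nodal ball at all, since both nodal domains of the second radial eigenfunction are sub-annuli. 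The transplantation idea is too vague to verify as written. For the ball the standard argument is the Bessel-zero interlacing $j_{N/2,1}<j_{N/2-1,2}$; for the annulus the inequality $\mu_{2,0}>\mu_{1,1}$ is still true but requires a genuine Sturm--Liouville comparison. This ordering is precisely what the paper's symmetry-based proof avoids having to check.
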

\begin{proof}
This result seems to be well known (at least in the case of the ball), but since we couldn't find an exact reference we give here a proof. 

Let $u\in E_2\backslash \{0\}$. We claim there exists $e\in \mathbb{S}^{N-1}$ such that $u$ is odd with respect to the hyperplane $\{x\cdot e=0\}$, axially symmetric with respect to $\R e$. If $\Omega$ is a ball and $N=2$, this is a direct consequence of \cite[Theorem 1.2]{Damascelli}. In the general case, by Theorem 5.1 in \cite{BartschWethWillem}, we know that $u$ is foliated Schwarz symmetric with respect to some $e\in \mathbb{S}^{N-1}$. In particular, $u$ is invariant under rotations around $\R e$. Then, by \cite[Proposition 2.1]{GrumiauTroestler}, $w$ is odd in the direction $e$.  

As a consequence of the last paragraph, $\Gamma_e \subseteq \{w=0\}$. So $-\Delta w=\lambda_2w$ in the half-domain $\Omega_e^+$, and $w=0$ on $\partial \Omega_e^+$, which means that $\lambda_2(\Omega)=\lambda_k(\Omega_e^+)$ for some $k\in \N$. On the other hand, by taking the odd extension of the first eigenfunction on $\Omega_e^+$ to the whole $\Omega$, it follows that $\lambda_2(\Omega)\leq \lambda_1(\Omega_e^+)$. Therefore  $\lambda_2(\Omega)=\lambda_1(\Omega_e^+)$ and  $w|_{\Omega_e^+}$ is a first eigenfunction in $\Omega_e^+$, and either $w>0$ or $w<0$ in $\Omega_e^+$. In conclusion, $\Gamma_e= \{w=0\}$.
\end{proof}

In $\Omega_e^+$, for $\lambda>\lambda_1(\Omega_e^+)=\lambda_2(\Omega)$, take the unique positive solution $w^+_e(\lambda)$ of $-\Delta u=\lambda(u-u^3)$, and, similarly, define $w^-_e(\lambda)$ on $\Omega_e^-$.
Observe that, by considering
\[
w_e(\lambda):=\begin{cases}
w^+_e & \text{ in } \Omega_e^+\\
-w^-_e & \text{ in } \Omega_e^-
\end{cases}
\]
we obtain a sign-changing solution of \eqref{eq:AC} in the whole $\Omega$, which converges in $C^{1,\alpha}(\overline \Omega)$--norm to 0 as $\lambda\to \lambda_2(\Omega)$. Given $e_1,e_2\in \mathbb{S}^{N-1}$ with $e_1\neq e_2$, $w_{e_1}(\lambda)$ is obtained from $w_{e_2}(\lambda)$ after a rotation around the origin. Thus, we have a  manifold of solutions bifurcating from $(\lambda_2,0)$:
\[
S=\{(\lambda,w_e(\lambda)):\ \lambda>\lambda_2(\Omega), e \in \partial B_1(0)\}.
\]
The aim of this section is to prove that, for $\lambda$ close to $\lambda_2(\Omega)$, $S$ describes all possible sign-changing solutions of \eqref{eq:AC}.

By combining Lemma \ref{lemma=E2} and \cite{CrandallRabinowitz} we see that, close to $\lambda_2(\Omega)$, when restricted to the space $\{u\in H^1_0(\Omega):\ u \text{ odd with respect to } e\}$,  all bounded sign-changing solutions belong to the $C^1$-curve $\lambda\mapsto w_e(\lambda)$. The following result completes the picture.
 
\begin{theorem}\label{thm:symmetry}
Let $\Omega$ be a ball or an annulus centered at the origin in $\R^N$ ($N\geq 2$). There exists $\eps=\eps(\Omega)$ such that, if  $\lambda\in ( \lambda_2(\Omega),\lambda_2(\Omega)+\eps)$ and $u$ a sign-changing solution of \eqref{eq:AC}, then $u$ and $P_{2}(u)$ have the same symmetries, which means that $u$ is odd symmetric with respect to a half space $\{x\cdot e=0\}$ for some $e\in \mathbb{S}^{N-1}$, axially symmetric with respect to $\R e$, and $\{u=0\}=\{x\cdot e=0\}$.

In particular,  for $\lambda \in (\lambda_2(\Omega),\lambda_2(\Omega)+\eps)$, $c^\lambda_{mp}=c^\lambda_{nod}$ and
\[
u \text{ is a bounded sign-changing solution of } \eqref{eq:AC} \iff (\lambda,u)\in S.
\]  
\end{theorem}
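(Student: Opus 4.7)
The strategy is a Lyapunov--Schmidt reduction at the bifurcation point $(\lambda_2(\Omega), 0)$, combined with the $O(N)$-equivariance of the problem and the structural description of $E_2 \setminus \{0\}$ provided by Lemma~\ref{lemma=E2}. Arguing by contradiction, assume there exist $\lambda_n \to \lambda_2(\Omega)^+$ and bounded sign-changing solutions $u_n$ of \eqref{eq:AC} which do not enjoy the asserted symmetries. By Lemma~\ref{lemma:convergence}, $u_n \to 0$ in $C^{1,\alpha}(\overline{\Omega})$, so for $n$ large the pair $(\lambda_n, u_n)$ lies in any prescribed neighborhood of $(\lambda_2(\Omega), 0)$.

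Decompose $H^1_0(\Omega) = E_2 \oplus E_2^\perp$ using $L^2$-orthogonality, and let $I_\lambda$ denote the energy of \eqref{eq:AC}. Since $-\Delta - \lambda_2(\Omega)$ is Fredholm with kernel exactly $E_2$, its restriction to $E_2^\perp$ is invertible, so the implicit function theorem produces, for $(\lambda,\alpha)$ near $(\lambda_2(\Omega), 0) \in \R \times E_2$, a unique smooth small map $W(\lambda,\alpha) \in E_2^\perp$ solving
\[
\pi_{E_2^\perp}\, I'_\lambda(\alpha + W(\lambda,\alpha)) = 0.
\]
Writing $u_n = \alpha_n + w_n$ with $\alpha_n := P_2(u_n)$, uniqueness in the reduction forces $w_n = W(\lambda_n,\alpha_n)$; since $W(\lambda, 0) \equiv 0$ and $u_n \not\equiv 0$, necessarily $\alpha_n \neq 0$ for $n$ large.

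The crucial point is equivariance. Since $\Omega$ is radial, the Laplacian commutes with $O(N)$, the nonlinearity $s\mapsto \lambda(s-|s|^{p-1}s)$ is $O(N)$-invariant and odd in $s$, and both $E_2,E_2^\perp$ are $O(N)$-invariant. By uniqueness, $W(\lambda, g\cdot \alpha) = g\cdot W(\lambda,\alpha)$ for every $g \in O(N)$ and $W(\lambda, -\alpha) = -W(\lambda,\alpha)$. By Lemma~\ref{lemma=E2}, the nontrivial element $\alpha_n \in E_2$ is axially symmetric about some axis $\R e_n$, $e_n \in \mathbb{S}^{N-1}$, and odd with respect to $\Gamma_{e_n} = \{x\cdot e_n = 0\}$. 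Hence any rotation $g$ fixing $e_n$ satisfies $g\cdot \alpha_n = \alpha_n$, which forces $g\cdot u_n = u_n$; the reflection $\sigma_n$ across $\Gamma_{e_n}$ satisfies $\sigma_n\cdot \alpha_n = -\alpha_n$, so combining equivariance with oddness yields $\sigma_n\cdot u_n = -u_n$. Thus $u_n$ is axially symmetric about $\R e_n$ and odd with respect to $\Gamma_{e_n}$.

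To conclude, $u_n|_{\Omega_{e_n}^+}$ solves \eqref{eq:AC} on $\Omega_{e_n}^+$ with zero Dirichlet data. Since $\Omega$ is radial, all half-domains $\Omega_e^+$ are pairwise isometric, so by Lemma~\ref{lemma=E2} one has $\lambda_1(\Omega_e^+) = \lambda_2(\Omega)$ and $\lambda_2(\Omega_e^+) =: \mu$ is a constant strictly larger than $\lambda_2(\Omega)$ (connectedness of $\Omega_e^+$), independent of $e$. Setting $\eps := \mu - \lambda_2(\Omega)$, for $\lambda_n \in (\lambda_2(\Omega), \lambda_2(\Omega) + \eps)$ the half-domain admits no bounded sign-changing solution by property~(d) applied to $\Omega_{e_n}^+$, and only one positive solution by property~(c); hence $u_n|_{\Omega_{e_n}^+} = \pm w_{\Omega_{e_n}^+}(\lambda_n)$, and odd extension gives $u_n = \pm w_{e_n}(\lambda_n) \in S$ together with $\{u_n = 0\} = \Gamma_{e_n}$, contradicting the initial assumption. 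The identity $c_{nod}^\lambda = c_{mp}^\lambda$ follows from Theorem~\ref{sec:ball-case}(i). The main subtlety I anticipate is setting up the Lyapunov--Schmidt reduction in a fully $O(N)$-compatible manner: one needs to work in a scalar product on $H^1_0(\Omega)$ invariant under $O(N)$ (the standard one serves, since $\Omega$ is radial) so that the uniqueness transfer of symmetries goes through without further effort; once this is in place, the remaining steps are essentially algebraic.
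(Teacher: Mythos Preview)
Your argument is correct and reaches the same conclusion, but it proceeds along a genuinely different route from the paper. The paper does not invoke Lyapunov--Schmidt at all; instead it rescales $\tilde u = u/\|u\|_\infty$, uses a projection lemma borrowed from \cite{Bonheureetal} (Lemma~\ref{lemma:Bonheureetal}) to show $P_2\tilde u\not\equiv 0$ (Lemma~\ref{lemma:alternative}), and then proves a tailor-made uniqueness criterion (Lemma~\ref{lemma:aux1}): two sign-changing solutions with the same $L^\infty$ norm and the same $P_2$ projection must coincide. This is packaged into Lemma~\ref{lemma:T}, which is then applied with $T$ equal to the reflection (or rotation) fixing $P_2u$, yielding $Tu=u$ directly.

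Your Lyapunov--Schmidt reduction with $O(N)$-equivariance encodes the same principle --- near $(\lambda_2,0)$ a solution is determined by its $E_2$-component --- but via the implicit function theorem rather than the rescaling trick. What your approach buys is conceptual transparency: the symmetry transfer is immediate from uniqueness of $W(\lambda,\alpha)$ and equivariance, and your half-domain argument (using properties (c) and (d) on $\Omega_e^+$, which indeed hold on Lipschitz domains) gives the identification $u=\pm w_e(\lambda)$ and the nodal set $\{u=0\}=\Gamma_e$ in one stroke. What the paper's approach buys is that it avoids setting up the IFT machinery and works with softer tools (the key Lemma~\ref{lemma:Bonheureetal} is a linear statement). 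Two minor points worth tightening in your write-up: the map $W$ is $C^1$ rather than ``smooth'' when $1<p<2$, which suffices; and the $\varepsilon$ you exhibit at the end ($\mu-\lambda_2(\Omega)$) should be combined with the radius of validity of the reduction, though the contradiction structure with $\lambda_n\to\lambda_2$ already handles this implicitly.
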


We remark that, for more general equations than \eqref{eq:AC}, the paper by Miyamoto \cite{Miyamoto} provides bifurcation results for the case when $\Omega$ is a ball or an annulus in \emph{dimension 2}. In particular, Theorem \ref{thm:symmetry} for $B_1(0)\subset \R^2$  is contained in \cite[Theorem 3.5]{Miyamoto}. The proof of Theorem \ref{thm:symmetry} does not rely on bifurcation results; instead we adapt  ideas from  \cite{Bonheureetal} to our context. We divide the proof in several lemmas.

 \begin{lemma}[{\cite[Lemma 3.1]{Bonheureetal}}]\label{lemma:Bonheureetal} Let $N\geq 2$. There exists $\delta>0$ such that if $\|a(x)-\lambda_2(\Omega)\|_{L^{N/2}}<\delta$ and $u\in H^1_0(\Omega)$ solves $-\Delta u=a(x)u$ in $\Omega$, then either $u\equiv 0$ or $P_2u\not\equiv 0$.
\end{lemma}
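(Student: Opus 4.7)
The plan is to argue by contradiction via a concentration/compactness argument, exploiting the scaling-critical role played by $L^{N/2}(\Omega)$ for the Schr\"odinger operator $-\Delta - a(x)$ in dimension $N$. Suppose the statement fails. Then there exists a sequence of potentials $a_n \in L^{N/2}(\Omega)$ with $\|a_n - \lambda_2(\Omega)\|_{L^{N/2}} \to 0$ and nontrivial functions $u_n \in H^1_0(\Omega)$ solving $-\Delta u_n = a_n(x) u_n$ and satisfying $P_2 u_n \equiv 0$. After normalizing so that $\|u_n\|_{H^1_0} = 1$, we may extract a subsequence with $u_n \rightharpoonup u$ weakly in $H^1_0(\Omega)$, strongly in $L^q(\Omega)$ for every $q < 2^* = 2N/(N-2)$ (for $N=2$, for every $q<\infty$), and pointwise a.e.

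The key step is to pass to the limit in the weak formulation. For any test function $\vphi \in C_c^\infty(\Omega)$, write
\[
\int_\Omega \nabla u_n \cdot \nabla \vphi \, dx = \int_\Omega (a_n - \lambda_2(\Omega)) u_n \vphi \, dx + \lambda_2(\Omega) \int_\Omega u_n \vphi \, dx.
\]
By H\"older's inequality with exponents $(N/2, 2^*, 2^*)$ (for $N \geq 3$; for $N=2$ one uses instead an exponent slightly larger than $1$ and the embedding $H^1_0 \hookrightarrow L^q$ for all finite $q$) together with the continuous Sobolev embedding $H^1_0(\Omega) \hookrightarrow L^{2^*}(\Omega)$,
\[
\Bigl| \int_\Omega (a_n - \lambda_2(\Omega)) u_n \vphi \, dx \Bigr| \leq \|a_n - \lambda_2(\Omega)\|_{L^{N/2}} \, \|u_n\|_{L^{2^*}} \, \|\vphi\|_{L^{2^*}} \longrightarrow 0.
\]
Combined with weak convergence of $u_n$ in $L^2$, this shows that $u$ solves $-\Delta u = \lambda_2(\Omega) u$ in $\Omega$, so $u \in E_2$. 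On the other hand, $P_2 u_n \equiv 0$ for every $n$, and since $P_2$ is $L^2$-continuous and $u_n \to u$ strongly in $L^2(\Omega)$, we obtain $P_2 u = 0$. Hence $u \in E_2 \cap E_2^\perp = \{0\}$, so $u \equiv 0$.

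To reach a contradiction with the normalization, test the equation against $u_n$ itself:
\[
1 = \int_\Omega |\nabla u_n|^2 \, dx = \int_\Omega a_n u_n^2 \, dx = \int_\Omega (a_n - \lambda_2(\Omega)) u_n^2 \, dx + \lambda_2(\Omega) \int_\Omega u_n^2 \, dx.
\]
The first term is bounded in absolute value by $\|a_n - \lambda_2(\Omega)\|_{L^{N/2}} \|u_n\|_{L^{2^*}}^2$, which tends to zero, while the second tends to $\lambda_2(\Omega) \int_\Omega u^2 \, dx = 0$ by strong $L^2$ convergence. Hence the right-hand side tends to $0$, contradicting $\|u_n\|_{H^1_0}=1$. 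This completes the contradiction and proves the lemma.

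The main subtle point is the scaling-critical nature of the H\"older estimate: $L^{N/2}$ is precisely the borderline space allowing one to absorb the potential $a_n-\lambda_2(\Omega)$ against Sobolev-critical factors without losing compactness, and any weakening of this norm would destroy the argument. The dimension $N=2$ requires a minor adjustment in the H\"older exponents but offers more flexibility thanks to the improved Sobolev embedding.
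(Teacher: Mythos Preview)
The paper does not prove this lemma; it simply quotes it from \cite[Lemma 3.1]{Bonheureetal}. Your contradiction/compactness argument is the standard one and matches what is done in that reference: normalize, pass to a weak limit which must lie in $E_2$, observe the limit also lies in $E_2^\perp$ hence vanishes, and then derive a contradiction from the energy identity. For $N\ge 3$ your sketch is complete and correct.

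One comment on the $N=2$ case: your parenthetical remark that ``one uses instead an exponent slightly larger than $1$'' does not quite work as written, because the hypothesis only gives $\|a_n-\lambda_2\|_{L^1}\to 0$, and you cannot upgrade this to any $L^q$ with $q>1$ without further assumptions. So the H\"older step $\bigl|\int (a_n-\lambda_2)u_n^2\bigr|\le \|a_n-\lambda_2\|_{L^1}\|u_n\|_{L^\infty}^2$ is the only option, and $H^1_0(\Omega)\not\hookrightarrow L^\infty$ in dimension two. In the applications made in the present paper (Lemmas~\ref{lemma:alternative} and~\ref{lemma:aux1}) the potential $a_n$ actually converges in $L^\infty$, so this is not an issue there; but as a stand-alone statement with $L^{N/2}=L^1$ control in $N=2$, your argument has a genuine gap at this point. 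Either restrict to $N\ge 3$, assume $L^q$ control for some $q>1$ when $N=2$, or invoke an additional ingredient (e.g.\ a Trudinger--Moser type bound combined with equi-integrability of $a_n-\lambda_2$).
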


Next observe that, given $u$ a solution of $-\Delta u=\lambda(u-|u|^{p-1}u)$, then $\tilde u=u/\|u\|_{\infty}$ solves 
\begin{equation}\label{eq:equationrescailing}
-\Delta \tilde u=\lambda\tilde u-\lambda \|u\|_{\infty}^{p-1}|\tilde u|^{p-1}\tilde u.
\end{equation} Denote by $B_r$ the $H^1_0$--ball centered at 0 of radius $r$.

 \begin{lemma}\label{lemma:alternative}
 There exists $M, \eps>0$ such that, if $\lambda\in (\lambda_2(\Omega),\lambda_2(\Omega)+\eps)$ and $u\in H^1_0(\Omega)$ is a sign-changing solution of $-\Delta u=\lambda(u-|u|^{p-1}u)$, then
 \[
 \frac{u}{\|u\|_{\infty}}\in B_M \quad \text{ and } \quad P_2\left(\frac{u}{\|u\|_{\infty}}\right)\not\in B_{1/M}.
 \]
 \end{lemma}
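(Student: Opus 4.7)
The plan is to rescale the solution and exploit the fact that the rescaled function satisfies a \emph{linear} Schrödinger-type equation whose potential is close to the constant $\lambda_2(\Omega)$; then the second eigenspace will capture a nontrivial portion of the limit.

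First, given a sign-changing solution $u$, set $\tilde u := u/\|u\|_\infty$. Then $\|\tilde u\|_\infty = 1$ and a direct computation from \eqref{eq:equationrescailing} shows
\[
-\Delta \tilde u = a(x)\, \tilde u, \qquad a(x) := \lambda\bigl(1 - \|u\|_\infty^{p-1} |\tilde u(x)|^{p-1}\bigr),
\]
so in particular $\|a\|_{L^\infty} \le \lambda$ and $\|a - \lambda\|_{L^\infty} \le \lambda\, \|u\|_\infty^{p-1}$. The first assertion $\tilde u \in B_M$ is then immediate: testing the equation against $\tilde u$ gives $\|\tilde u\|_{H^1_0}^2 \le \lambda \int_\Omega \tilde u^2 \le (\lambda_2(\Omega)+\eps)\, |\Omega|$, so one may take $M_1 := \sqrt{(\lambda_2(\Omega)+1)|\Omega|}$.

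For the second assertion, I would argue by contradiction: assume there exist sequences $\lambda_n \downarrow \lambda_2(\Omega)$ and sign-changing solutions $u_n$ of \eqref{eq:AC} with $\lambda=\lambda_n$, such that $\|P_2(\tilde u_n)\|_{H^1_0} \to 0$, where $\tilde u_n := u_n/\|u_n\|_\infty$. By Lemma~\ref{lemma:convergence}, $u_n \to 0$ in $C^{1,\alpha}(\overline\Omega)$, hence $\|u_n\|_\infty \to 0$, which combined with the estimate above gives $a_n \to \lambda_2(\Omega)$ uniformly in $\Omega$. Since $\|\tilde u_n\|_\infty = 1$ and $a_n$ is uniformly bounded in $L^\infty(\Omega)$, the functions $\Delta \tilde u_n = -a_n \tilde u_n$ are uniformly bounded in $L^\infty(\Omega)$, so by standard $W^{2,q}$-estimates and Sobolev embeddings, $\{\tilde u_n\}$ is bounded in $C^{1,\alpha}(\overline \Omega)$ for some $\alpha\in(0,1)$. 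Passing to a subsequence, $\tilde u_n \to \tilde u^*$ in $C^{1,\alpha}(\overline \Omega)$ (and a fortiori in $H^1_0(\Omega)$).

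Passing to the limit in the equation yields $-\Delta \tilde u^* = \lambda_2(\Omega)\, \tilde u^*$ with $\tilde u^* \in H^1_0(\Omega)$, so $\tilde u^* \in E_2$. Moreover $\|\tilde u^*\|_\infty = \lim_n \|\tilde u_n\|_\infty = 1$, so $\tilde u^* \neq 0$. By continuity of the projector $P_2$ on $H^1_0(\Omega)$,
\[
\|P_2(\tilde u_n)\|_{H^1_0} \to \|P_2(\tilde u^*)\|_{H^1_0} = \|\tilde u^*\|_{H^1_0} > 0,
\]
contradicting the assumption. The main technical point is really the uniform $L^\infty$-to-$C^{1,\alpha}$ regularization via the linear equation for $\tilde u_n$; once this compactness is in place the argument is just a passage to the limit that identifies the limit as a nontrivial second eigenfunction. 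Note that the qualitative statement $P_2\tilde u \not\equiv 0$ is already guaranteed by Lemma~\ref{lemma:Bonheureetal} (applied with $a$ as above, since $\|a-\lambda_2\|_{L^{N/2}} \to 0$), so the contradiction argument is only needed to upgrade this into the quantitative uniform lower bound.
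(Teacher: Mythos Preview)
Your proof is correct and follows essentially the same approach as the paper: rescale, use elliptic regularity plus Lemma~\ref{lemma:convergence} to obtain $C^{1,\alpha}$-compactness of $\tilde u_n$, and identify the limit as a nontrivial element of $E_2$, contradicting $\|P_2\tilde u_n\|_{H^1_0}\to 0$. The only (minor) difference is that the paper handles both assertions simultaneously by a single contradiction argument, whereas you establish the $H^1_0$-bound $\tilde u\in B_M$ directly by testing the equation; your route here is in fact slightly more explicit.
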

 \begin{proof}
 Suppose, in view of a contradiction, the existence of $\lambda_n \searrow \lambda_2(\Omega)$ and $u_n\in H^1_0(\Omega)$ a sign-changing solution of $-\Delta u_n=\lambda_n(u_n-|u_n|^{p-1}u_n)$ with
 \begin{equation}\label{eq:contradictionargument}
 \frac{u_n}{\|u_n\|_{\infty}}\not\in B_n \quad \text{ or } \quad P_2 \left(\frac{u_n}{\|u_n\|_{\infty}}\right)\in B_{1/n}.
 \end{equation}
 Define $\tilde u_n:=u_n/\|u_n\|_{\infty}$, which solves \eqref{eq:equationrescailing} with a bounded right-hand side. Then by combining elliptic  estimates with Lemma \ref{lemma:convergence} we see that $\tilde u_n \to \tilde u$ in $C^{1,\alpha}(\overline \Omega)$, with $\|\tilde u\|_\infty=1$ and $-\Delta \tilde u=\lambda_2(\Omega) \tilde u$. This contradicts \eqref{eq:contradictionargument}.
 \end{proof}
 
 \begin{lemma}\label{lemma:aux1}
There exists $\eps>0$ such that, if $\lambda\in (\lambda_2(\Omega),\lambda_2(\Omega)+\eps)$, $u,v\in H^1_0(\Omega)$ are sign-changing solutions of \eqref{eq:AC} with $\|u\|_\infty=\|v\|_\infty$, then either $u\equiv v$ or $P_2 u\not\equiv P_2 v$.
 \end{lemma}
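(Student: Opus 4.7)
The plan is to prove the contrapositive: suppose $u\not\equiv v$ and $P_2 u\equiv P_2 v$; then for $\lambda$ close enough to $\lambda_2(\Omega)$ we derive a contradiction. Set $w:=u-v\not\equiv 0$, which by assumption satisfies $P_2 w\equiv 0$. Subtracting the equations for $u$ and $v$,
\[
-\Delta w = \lambda w - \lambda\bigl(|u|^{p-1}u-|v|^{p-1}v\bigr).
\]
Since $p>1$, the map $t\mapsto |t|^{p-1}t$ is $C^1$, so by the fundamental theorem of calculus we get $|u|^{p-1}u-|v|^{p-1}v=c(x)w$ where
\[
c(x):=p\int_0^1 |tu(x)+(1-t)v(x)|^{p-1}\,dt.
\]
Consequently $-\Delta w = a(x)w$ with $a(x):=\lambda\bigl(1-c(x)\bigr)$.

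The next step is to apply Lemma \ref{lemma:Bonheureetal}: we need $\|a-\lambda_2(\Omega)\|_{L^{N/2}}$ to be smaller than the $\delta$ given there. Here the key input is a \emph{uniform} version of Lemma \ref{lemma:convergence}: for every $\eta>0$ there exists $\eps>0$ such that every bounded sign-changing solution of \eqref{eq:AC} at parameter $\lambda\in(\lambda_2(\Omega),\lambda_2(\Omega)+\eps)$ satisfies $\|u\|_\infty<\eta$. This is proved by a contradiction/subsequence argument: if it failed we could extract a family $\{u_{\lambda_n}\}$ with $\lambda_n\to\lambda_2(\Omega)$ and $\|u_{\lambda_n}\|_\infty\geq \eta$, contradicting Lemma \ref{lemma:convergence}. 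Applying this to both $u$ and $v$ forces $\|c\|_{L^\infty}\to 0$, and hence
\[
\|a-\lambda_2(\Omega)\|_{L^{N/2}} \leq |\Omega|^{2/N}\bigl(|\lambda-\lambda_2(\Omega)|+\lambda\|c\|_{L^\infty}\bigr)\longrightarrow 0\qquad\text{as }\lambda\to\lambda_2(\Omega).
\]

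Choose $\eps>0$ small enough that the right-hand side is strictly less than the $\delta$ from Lemma \ref{lemma:Bonheureetal}. That lemma, applied to the equation $-\Delta w=a(x)w$, yields the dichotomy: either $w\equiv 0$ or $P_2 w\not\equiv 0$. Since we are in the case $w\not\equiv 0$, the second alternative holds, contradicting $P_2 w\equiv 0$. This completes the proof.

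The main obstacle is precisely the uniformity issue mentioned above: Lemma \ref{lemma:convergence} is stated for a single family, and one must upgrade it to a uniform $L^\infty$-smallness statement over the entire set of sign-changing solutions at a given $\lambda$, in order to choose $\eps$ independently of $(u,v)$. The hypothesis $\|u\|_\infty=\|v\|_\infty$ is not used in the argument above; it appears to be included for convenience in the subsequent application (Theorem \ref{thm:symmetry}), where a natural normalization of sign-changing solutions is fixed.
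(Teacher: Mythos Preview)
Your proof is correct and follows the same essential strategy as the paper: write the difference $w=u-v$ as a solution of $-\Delta w=a(x)w$ with $a(x)$ close to $\lambda_2(\Omega)$ in $L^{N/2}$, and invoke Lemma~\ref{lemma:Bonheureetal}. The uniform smallness you extract from Lemma~\ref{lemma:convergence} is exactly what is needed, and your contradiction argument to upgrade that lemma to a statement uniform over all sign-changing solutions is sound (it is the same compactness argument as in the proof of Lemma~\ref{lemma:convergence} itself).

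The organizational difference from the paper is genuine, if minor. The paper first rescales to $\tilde u_n=u_n/\|u_n\|_\infty$, $\tilde v_n=v_n/\|v_n\|_\infty$ and subtracts the rescaled equations; for that subtraction to give a linear equation in $\tilde u_n-\tilde v_n$, one needs $\|u_n\|_\infty=\|v_n\|_\infty$, and the hypothesis is used again to translate the conclusion back to $u_n,v_n$. Your direct (unrescaled) subtraction avoids this, and your observation that the equal-norm hypothesis is not actually needed for the lemma is correct. The paper's route, on the other hand, makes the boundedness of the quotient $\frac{|\tilde v_n|^{p-1}\tilde v_n-|\tilde u_n|^{p-1}\tilde u_n}{\tilde v_n-\tilde u_n}$ immediate since $\|\tilde u_n\|_\infty=\|\tilde v_n\|_\infty=1$, whereas in your version the smallness of $c(x)$ relies on the uniform $L^\infty$-decay of sign-changing solutions. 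Both are equally short once Lemma~\ref{lemma:convergence} is available.
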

 
 \begin{proof}
Take $\lambda_n\to \lambda_2(\Omega)$, and let $u_n,v_n$ be solutions to \eqref{eq:AC}  such that $\|u_n\|_\infty=\|v_n\|_\infty$. Let $\tilde u_n:= u_n/\|u_n\|_\infty$ and $\tilde v_n:= v_n/\|v_n\|_\infty$, which solve \eqref{eq:equationrescailing}. By the previous lemma we have that, up to a subsequence, 
 \begin{equation}\label{eq:weakconvergencerescailing}
 \tilde u_n\rightharpoonup \alpha\neq 0,\qquad \tilde v_n\rightharpoonup \beta\neq 0 \text{ weakly in } H^1_0(\Omega).
 \end{equation}
 Thus, since $u_n,v_n \to 0$ by Lemma \ref{lemma:convergence}, we have $-\Delta \alpha=\lambda_2 \alpha$, $-\Delta \beta=\lambda_2 \beta$, and the convergence in \eqref{eq:weakconvergencerescailing} is actually strong. Using the fact that $u_n,v_n$ have the same $L^\infty$ norm, we write
\begin{align*}
-\Delta (\tilde u_n-\tilde v_n)&=\lambda_n(\tilde u_n-\tilde v_n)+ \lambda_n \|v_n\|_{\infty}^{p-1} |\tilde v_n|^{p-1}\tilde v_n-\lambda_n \|u_n\|_{\infty}^{p-1} |\tilde u_n|^{p-1}\tilde u_n\\
					&=\lambda_n (\tilde u_n-\tilde v_n +\|u_n\|_{\infty}^{p-1} (|\tilde v_n|^{p-1}\tilde v_n-|\tilde u_n|^{p-1}\tilde u_n))=a_n(x)(\tilde u_n-\tilde v_n),
\end{align*}
for
\[
a_n(x):=
\lambda_n\left(1- \|u_n\|_\infty^{p-1}\frac{|\tilde v_n|^{p-1}\tilde v_n-|\tilde u_n|^{p-1}\tilde u_n}{\tilde v_n-\tilde u_n}\right)
\]
Since $\|a_n(x)-\lambda_2(\Omega)\|_{\infty}\to 0$, then by Lemma \ref{lemma:Bonheureetal} either $\tilde u_n\equiv \tilde v_n$ or $P_2 \tilde u_n\not\equiv P_2 \tilde v_n$. The conclusion now follows by using once again the fact that $\|u_n\|_\infty=\|v_n\|_\infty$.
 \end{proof}
 
 \begin{lemma}\label{lemma:T}
Let $\eps>0$ be as in Lemma \ref{lemma:aux1} and let $\lambda\in(\lambda_2(\Omega),\lambda_2(\Omega)+\eps)$. Let $u\in H^1_0(\Omega)\cap L^\infty(\Omega)$ be a solution of \eqref{eq:AC}  such that
$P_2 u \neq 0$, and let $T:H^1_0(\Omega)\to H^1_0(\Omega)$ be a linear map such that
\begin{enumerate}
\item[(i)] $T(E_2)=E_2$, $T(E_2^\perp)=E_2^\perp$; $T(P_2 u)=P_2 u$;
\item[(ii)] $Tu$ solves \eqref{eq:AC};
\item[(iii)] $\|Tu\|_\infty=\|u\|_\infty$.
\end{enumerate}
Then, $T u=u$.
 \end{lemma}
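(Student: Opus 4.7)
The plan is to invoke Lemma~\ref{lemma:aux1} applied to the pair $(u,Tu)$. First I would verify that $P_2(Tu)=P_2 u$. Decomposing $u=P_2 u+(u-P_2 u)$ with $u-P_2 u\in E_2^\perp$, hypothesis (i) says that $T$ preserves both $E_2$ and $E_2^\perp$ and fixes $P_2 u$, so $Tu=T(P_2 u)+T(u-P_2 u)=P_2 u+T(u-P_2 u)$ with $T(u-P_2 u)\in E_2^\perp$; projecting onto $E_2$ yields $P_2(Tu)=P_2 u$.

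Next, I need both $u$ and $Tu$ to be sign-changing in order to invoke Lemma~\ref{lemma:aux1}. Since $\Omega$ is radial, uniqueness of the bounded positive solution forces $w_\lambda$ to be radial and therefore symmetric under every hyperplane reflection through the origin. By Lemma~\ref{lemma=E2}, every nonzero $\phi\in E_2$ is odd with respect to some such hyperplane, which after splitting the integral gives $\langle w_\lambda,\phi\rangle_{L^2}=0$; consequently $P_2(\pm w_\lambda)=0$. Since $P_2 u=P_2(Tu)\neq 0$, neither $u$ nor $Tu$ can coincide with $\pm w_\lambda$. As any nonzero non-sign-changing bounded solution of \eqref{eq:AC} must equal $\pm w_\lambda$ by the uniqueness property (c), both $u$ and $Tu$ are therefore sign-changing.

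Finally, hypothesis (iii) supplies $\|u\|_\infty=\|Tu\|_\infty$, and since $\lambda\in(\lambda_2(\Omega),\lambda_2(\Omega)+\eps)$, Lemma~\ref{lemma:aux1} produces the dichotomy: either $u\equiv Tu$, or $P_2 u\not\equiv P_2(Tu)$. The second alternative contradicts the first paragraph, so $Tu=u$, as desired. The only delicate point is verifying that $Tu$ is sign-changing so that Lemma~\ref{lemma:aux1} is applicable; this is precisely where the hypothesis $P_2 u\neq 0$ enters, ruling out the degenerate possibility $Tu=\pm w_\lambda$.
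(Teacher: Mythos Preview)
Your proof is correct and follows the same route as the paper: show $P_2(Tu)=P_2 u$ via the decomposition $H^1_0(\Omega)=E_2\oplus E_2^\perp$ and hypothesis (i), then apply Lemma~\ref{lemma:aux1} together with (ii)--(iii) to conclude. Your second paragraph, which verifies that $u$ and $Tu$ are sign-changing via $P_2(\pm w_\lambda)=0$, is a detail the paper leaves implicit (in its application the map $T$ is a reflection, so this is automatic), and your justification of it is sound.
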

 \begin{proof}
Consider the splitting $H^1_0(\Omega)=E_2\oplus E_2^\perp$ and observe that
\begin{align*}
P_2(Tu)+&P_{E_2^\perp} (T u )=T (u ) =T( P_2 u) +T u .
\end{align*}
In particular, by property (i), $P_{2}(T u)=T( P_2 u)=P_2 u$. Applying Lemma \ref{lemma:aux1} to $u,Tu$ (and using (ii)), we deduce that $Tu=u$.
 \end{proof}

 \medbreak
 
 \begin{proof}[Proof of Theorem \ref{thm:symmetry}]
 Take $\eps>0$ as in Lemma \ref{lemma:aux1}, and let $u$ be a sign changing solution of \eqref{eq:AC} for $\lambda\in (\lambda_2(\Omega),\lambda_2(\Omega)+\eps)$. Lemma \ref{lemma:alternative} implies that $\alpha:=P_2u \not\equiv 0$. By Lemma \ref{lemma=E2}, we know that $\alpha$ is odd with respect to a certain direction, being even in all the other orthogonal directions. Suppose, without loss of generality, that
 \begin{align*}
& \alpha(-x_1,x_2,\ldots, x_N) =-\alpha(x_1,x_2,\ldots, x_N),\\
 & \alpha(x_1,x_2,\ldots, -x_i,\ldots, x_N) =\alpha(x_1,x_2,\ldots, x_i,\ldots, x_N),\ \forall i=2,\ldots, N.
 \end{align*}
 Take the linear map
 \[
 T:H^1_0(\Omega)\to H^1_0(\Omega),\qquad Tv(x):=-v(-x_1,x_2,\ldots, x_N),
 \]
which satisfies the conditions of Lemma  \ref{lemma:T}. Thus $Tu=u$, that is, $u$ is odd with respect to the same direction of $\alpha$. The fact that $u$ is even with respect to all other orthogonal directions is analogous, working this time with $T_iv(x):=v(x_1,x_2,\ldots, -x_i,\ldots, x_N)$.
 \end{proof}

 \subsection{The case of a square, $\lambda\sim \lambda_2(\Omega)$} 
 
 In this part we investigate \eqref{eq:AC} in the particular case of $p=3$, namely
 \begin{equation}\label{eq:ACsquare}
-\Delta v=\lambda(v-v^3), \qquad v \in H^1_0(\Omega),
\end{equation}
 in the square $\Omega = (0, \pi) \times (0, \pi)$. Observe that in this domain we are not in the situation of the previous sections, where $\partial \Omega$ is supposed to be smooth. However, for $\lambda\sim \lambda_2(\Omega)$ we are able to completely characterize the solution set, and in particular to determine the shape of the least energy nodal solution.
 
 By means of the change of variables $u = \lambda^{1/2}v$, we infer that \eqref{eq:ACsquare} is equivalent to
  \begin{equation}\label{eq:realACsquare}
-\Delta u=\lambda u- u^3, \qquad u \in H^1_0(\Omega),
\end{equation}
and we keep the form \eqref{eq:realACsquare} in order to take advantage of the results in \cite{delPinoGarciaMelianMusso}. In view of our purposes it is essential to observe that the Morse indices of $u$ and $v$ are the same, for $u = \lambda^{1/2}v$, when $v$ solves \eqref{eq:realACsquare}.

\begin{lemma}\label{lemma:morseindex}
Let $v$ be a solution of \eqref{eq:realACsquare}. Then $u = \lambda^{1/2}v$ solves \eqref{eq:ACsquare}, and $u$ and $v$ have the same Morse index.
\end{lemma}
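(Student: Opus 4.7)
The proof should be essentially a direct computation, split cleanly into two parts.

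First, I would verify the change-of-variable claim that $u = \lambda^{1/2} v$ solves \eqref{eq:realACsquare}. Starting from $-\Delta v = \lambda v - \lambda v^3$, multiply both sides by $\lambda^{1/2}$ and rewrite the cubic term as $\lambda^{1/2} \cdot \lambda v^3 = \lambda (\lambda^{1/2} v)^3 \cdot \lambda^{-1/2} \cdot \lambda^{1/2}$; more cleanly, $\lambda^{3/2} v^3 = (\lambda^{1/2} v)^3 = u^3$, so
\[
-\Delta u = -\Delta(\lambda^{1/2} v) = \lambda^{1/2}(\lambda v - \lambda v^3) = \lambda u - u^3.
\]

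Second, for the Morse index, I would compute the linearized Dirichlet operators at the two solutions. The energy functional associated with \eqref{eq:ACsquare} is
\[
J_\lambda(v) = \frac{1}{2}\int_\Omega |\nabla v|^2 - \frac{\lambda}{2}\int_\Omega v^2 + \frac{\lambda}{4}\int_\Omega v^4,
\]
whose second derivative at $v$ yields the linearized operator $L_v := -\Delta - \lambda + 3\lambda v^2$. The energy functional associated with \eqref{eq:realACsquare} is
\[
\tilde J_\lambda(u) = \frac{1}{2}\int_\Omega |\nabla u|^2 - \frac{\lambda}{2}\int_\Omega u^2 + \frac{1}{4}\int_\Omega u^4,
\]
whose second derivative at $u$ yields the linearized operator $L_u := -\Delta - \lambda + 3 u^2$. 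Substituting $u^2 = \lambda v^2$ into $L_u$, one sees at once that $L_u = L_v$ as operators on $H^1_0(\Omega)$.

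Since $L_u$ and $L_v$ coincide, they have the same Dirichlet spectrum, so in particular the same number of negative eigenvalues counted with multiplicity, which is exactly $m(u) = m(v)$. There is really no obstacle in this argument; the only thing to flag is that one must use the precise normalization $u = \lambda^{1/2} v$ (and not, say, $u = \lambda v$) in order to obtain the exact cancellation $3 u^2 = 3\lambda v^2$ that makes the two linearized operators identical rather than merely similar.
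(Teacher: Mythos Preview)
Your proof is correct and takes essentially the same approach as the paper: both observe that, with $u=\lambda^{1/2}v$, one has $3u^2=3\lambda v^2$, so the linearized Dirichlet operators $-\Delta-\lambda+3\lambda v^2$ and $-\Delta-\lambda+3u^2$ coincide and hence have the same spectrum and Morse index. The paper's proof compresses this into a single line stating the equivalence of the eigenvalue problems $\Delta\phi+\lambda(1-3v^2)\phi+\mu\phi=0$ and $\Delta\phi+(\lambda-3u^2)\phi+\mu\phi=0$, without writing out the functionals.
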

\begin{proof} Indeed, since $u = \lambda^{1/2}v$, the pair $(\phi, \mu)$ solves
$\Delta \phi +\lambda (1- 3 v^2) \phi + \mu \phi=0$ if, and only if, it solves $\Delta \phi + (\lambda- 3 u^2) \phi + \mu \phi=0.$
\end{proof}

From the results in \cite{delPinoGarciaMelianMusso}, for $\lambda> \lambda_2$ and $\lambda\sim \lambda_2$,  arising from bifurcation branches, \eqref{eq:realACsquare} has exactly eight nodal solutions, coming in four pairs $(u,-u)$. In order to be more precise, let us first we recall that all the second eigenfunctions $\phi_{\alpha}(x,y)$ of the Laplacian in $\Omega$, with $\int_{\Omega} \phi_{\alpha}^2 = \frac{\pi^2}{4}$, are
\begin{equation}\label{eigenfunction-square}
\phi_{\alpha}(x, y) = \cos(\alpha) \sin (x ) \sin(2y) + \sin(\alpha) \sin(2x) \sin(y).
\end{equation}
 We also introduce the eigenfunction
\begin{equation}\label{psi-eigenfunction-square}
\psi_{\alpha}(x, y) = \sin(\alpha) \sin (x ) \sin(2y) - \cos(\alpha) \sin(2x) \sin(y),
\end{equation}
orthogonal to $\phi_{\alpha}$. Next we recall the following qualitative result from \cite{delPinoGarciaMelianMusso}, see \cite[Theorem 1.1]{delPinoGarciaMelianMusso} and some remarks at \cite[p. 3501]{delPinoGarciaMelianMusso}, based on standard bifurcation analysis.

\begin{theorem}[{\cite[Theorem 1.1]{delPinoGarciaMelianMusso}}]\label{th:delpino}
There exists $\eps >0$ and a neighborhood $\mathcal{U}$ of $(\lambda_2,0)$ in $\mathbb{R} \times C(\overline{\Omega})$ such that the set of all solutions of \eqref{eq:realACsquare} in $\mathcal{U}$ can be described as the union of four $C^1$--curves in $\mathbb{R} \times C(\overline{\Omega})$,
\[
s \in (-\eps, \eps) \mapsto (\lambda_i(s), u_i(s)), \quad i=1,\ldots, 4,
\]
such that
\begin{equation}\label{square:asymptotics}
\begin{cases}
(\lambda_i(-s), u_i(-s)) = (\lambda_i(s), -u_i(s)),\\
\lambda_i(s) = \lambda_2+ \sigma_i s^2 + o(s^2),\\
u_i(s) = s \phi_{\alpha_i} + o(s),
\end{cases}
\end{equation}
where $\alpha_1=0$, $\alpha_2= \pi/4$, $\alpha_3=\pi/2$, $\alpha_4=3\pi/4$ and
\begin{equation}\label{sigmas}
\sigma_i = \frac{9}{16} \ \ \hbox{for} \ \ i=1,3, \qquad \sigma_i = \frac{21}{32} \ \ \hbox{for} \ \ i = 2, 4.
\end{equation}
\end{theorem}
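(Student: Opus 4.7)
The plan is to perform a Lyapunov--Schmidt reduction at the bifurcation point $(\lambda_2,0)$, handling carefully the fact that the kernel of the linearization $F_u(\lambda_2,0)=-\Delta-\lambda_2$ is the two-dimensional eigenspace $E_2$ spanned by $a(x,y)=\sin x\sin 2y$ and $b(x,y)=\sin 2x\sin y$. Since the kernel is not one-dimensional, the classical Crandall--Rabinowitz theorem does not apply directly and one must reduce by hand to a two-dimensional bifurcation equation. Setting $F(\lambda,u):=-\Delta u-\lambda u+u^3$, splitting $u=v+w$ with $v\in E_2$ and $w\in E_2^{\perp}$, and projecting onto $E_2^{\perp}$ via the orthogonal projector $Q$, the implicit function theorem applied to $(\lambda,v,w)\mapsto QF(\lambda,v+w)$ at $(\lambda_2,0,0)$ produces a $C^\infty$ solution $w=w(\lambda,v)$ in a neighbourhood. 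Differentiating once yields $\partial_v w(\lambda_2,0)=0$, and the oddness $F(\lambda,-u)=-F(\lambda,u)$ forces $w(\lambda,-v)=-w(\lambda,v)$; hence $w(\lambda,v)=O(\|v\|^3)$, which is \emph{crucial} because it guarantees that the Lyapunov--Schmidt correction does not perturb the leading coefficients of the reduced equation.

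Next I would parameterize $v=s\phi_\alpha$ with $s\in\R$ and $\alpha\in[0,2\pi)$, using the $L^2$-orthogonal basis $\{\phi_\alpha,\psi_\alpha\}$ of $E_2$. Testing the reduced equation $(I-Q)F(\lambda,v+w(\lambda,v))=0$ against $\phi_\alpha$ and $\psi_\alpha$, and dividing by the nontrivial factor $s$, produces the scalar system
\begin{align*}
(\lambda_2-\lambda)\tfrac{\pi^2}{4}+s^2\int_\Omega\phi_\alpha^4\,dx\,dy+O(s^4)&=0,\\
s^2\int_\Omega\phi_\alpha^3\psi_\alpha\,dx\,dy+O(s^4)&=0.
\end{align*}
Using the elementary integrals $\int_\Omega a^4=\int_\Omega b^4=9\pi^2/64$, $\int_\Omega a^2b^2=\pi^2/16$ and the vanishing $\int_\Omega a^3b=\int_\Omega ab^3=0$ (because $\int_0^\pi\sin^3 x\sin 2x\,dx=0$), a short expansion of $\phi_\alpha^3\psi_\alpha$ and $\phi_\alpha^4$ in powers of $a$ and $b$ yields $\int_\Omega\phi_\alpha^3\psi_\alpha=\tfrac{3\pi^2}{256}\sin 4\alpha$ and $\int_\Omega\phi_\alpha^4=\tfrac{9\pi^2}{64}+\tfrac{3\pi^2}{128}\sin^2 2\alpha$. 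The second equation therefore forces, at leading order, $\sin 4\alpha=0$; its eight roots in $[0,2\pi)$ collapse under the sign symmetry $(s,\alpha)\sim(-s,\alpha+\pi)$ to precisely the four prescribed directions $\alpha_i\in\{0,\pi/4,\pi/2,3\pi/4\}$.

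The simple zero condition $\tfrac{d}{d\alpha}\sin 4\alpha|_{\alpha=\alpha_i}=\pm 4\neq 0$ lets me invoke the implicit function theorem once more to continue each root into a $C^1$ curve $\alpha=\alpha(\lambda,s)$; plugging back into the first reduced equation produces the branch $\lambda=\lambda_i(s)$ with $\lambda_i(s)-\lambda_2=\sigma_i s^2+o(s^2)$, where $\sigma_i=\tfrac{4}{\pi^2}\int_\Omega\phi_{\alpha_i}^4$, which evaluates to $9/16$ for $\alpha_1,\alpha_3$ and $21/32$ for $\alpha_2,\alpha_4$, matching \eqref{sigmas}. The parity relation $(\lambda_i(-s),u_i(-s))=(\lambda_i(s),-u_i(s))$ is inherited from the oddness of $F$. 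I expect the main obstacle to be the bookkeeping in the Lyapunov--Schmidt step: one has to verify rigorously that $w=O(\|v\|^3)$ really suppresses every contribution of $w$ to the leading-order reduced system, and one must establish \emph{completeness}, namely that in a small neighbourhood of $(\lambda_2,0)$ in $\R\times C(\overline\Omega)$ there is no additional branch of solutions beyond the four described. Completeness follows from the uniqueness clause of the implicit function theorem combined with the isolatedness of the zeros of $\sin 4\alpha$ and the positivity of the $\sigma_i$'s.
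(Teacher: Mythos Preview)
The paper does not supply its own proof of this statement: Theorem~\ref{th:delpino} is \emph{quoted} from \cite{delPinoGarciaMelianMusso} as a known result (``based on standard bifurcation analysis''), and the paper immediately proceeds to use it in the proof of Theorem~\ref{th:square}. So there is no proof in the paper to compare against.

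Your outline is precisely the standard Lyapunov--Schmidt analysis that the cited reference carries out, and it is correct. The kernel $E_2$ is two-dimensional, the oddness of $F$ forces $w(\lambda,v)=O(\|v\|^3)$ so that the correction $w$ does not contaminate the cubic leading part of the reduced system, and the two scalar bifurcation equations you obtain are the right ones. The integral $\int_\Omega \phi_\alpha^3\psi_\alpha$ actually equals $-\tfrac{3\pi^2}{256}\sin 4\alpha$ (a harmless sign slip in your write-up), and its simple zeros pick out the four directions $\alpha_i\in\{0,\pi/4,\pi/2,3\pi/4\}$; the first equation then yields $\sigma_i=\tfrac{4}{\pi^2}\int_\Omega\phi_{\alpha_i}^4$, matching \eqref{eq:valuesL2L4} in the paper and giving the values $9/16$ and $21/32$. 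Completeness follows, as you say, from the uniqueness clause in the implicit function theorem together with the isolatedness of the roots of $\sin 4\alpha$. One small point worth making explicit in a full write-up is the passage from the degenerate equation $s^2\int\phi_\alpha^3\psi_\alpha+O(s^4)=0$ to a nondegenerate one near $s=0$: after dividing by $s^2$ you should check that the resulting function of $(s,\alpha)$ is still $C^1$ up to $s=0$, which follows from the smoothness of $w(\lambda,v)$ and the fact that only odd powers of $s$ appear.
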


Observe that these solutions are of two types. For $\alpha=0,\pi/2$, their nodal set is respectively $\{x=\pi/2\}$ and $\{y=\pi/2\}$; we call these solutions of type M. For $\alpha=\pi/4,3\pi/4$, their nodal set is respectively $\{x=y\}$ and $\{y=\pi-x\}$; we call these solutions of type D. Here we distinguish the least energy nodal solutions, by counting the Morse indices of these solutions.

\begin{theorem}\label{th:square}
For $\lambda> \lambda_2$ and $\lambda\sim \lambda_2$, it holds:
 \begin{enumerate}[a)]
 \item Solutions of type M have Morse index one.
 \item Solutions of type D have Morse index two.
 \item Least energy nodal solutions of \eqref{eq:realACsquare} are of type M.
 \end{enumerate}
\end{theorem}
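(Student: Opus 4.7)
The plan is to linearize \eqref{eq:realACsquare} along each of the four bifurcation branches $(\lambda_i(s), u_i(s))$ of Theorem~\ref{th:delpino} and use Lyapunov--Schmidt reduction on the corresponding linearized operator $L_s := -\Delta - \lambda(s) + 3 u_i(s)^2$ to compute the Morse index to leading order in $s$. Once the Morse indices are known, part (c) follows from a direct expansion of $I(u_i(s))$. By Lemma~\ref{lemma:morseindex}, Morse indices for \eqref{eq:realACsquare} and \eqref{eq:ACsquare} agree, so it suffices to work with the rescaled equation.

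At $s=0$ the operator $L_0=-\Delta - \lambda_2(\Omega)$ has, on $\Omega=(0,\pi)^2$, exactly one strictly negative eigenvalue (since $\lambda_1(\Omega)=2$, while $\lambda_2(\Omega)=\lambda_3(\Omega)=5$ and $\lambda_4(\Omega)=8$), a two-dimensional kernel equal to $E_2=\spann\{A,B\}$ with $A(x,y)=\sin x\sin 2y$ and $B(x,y)=\sin 2x\sin y$, and a uniform spectral gap above. For small $s$ the Morse index of $L_s$ is therefore $1$ plus the number of eigenvalues emerging from $0$ that become negative. By standard Kato perturbation of the two-dimensional isolated kernel, these two perturbed eigenvalues coincide, at leading order in $s^2$, with the eigenvalues of the symmetric bilinear form $\xi\mapsto s^2\int_\Omega(-\sigma_i + 3\phi_{\alpha_i}^2)\xi^2\,dx$ restricted to $E_2$. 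The two solvability conditions at order $s^3$ of the bifurcation expansion $u_i(s)=s\phi_{\alpha_i}+s^3 w_i+o(s^3)$, $\lambda_i(s)=\lambda_2+\sigma_i s^2 + o(s^2)$ give exactly $\sigma_i\int_\Omega\phi_{\alpha_i}^2=\int_\Omega\phi_{\alpha_i}^4$ and $\int_\Omega\phi_{\alpha_i}^3\psi_{\alpha_i}=0$; the second identity ensures that the reduced matrix is diagonal in the orthogonal basis $\{\phi_{\alpha_i},\psi_{\alpha_i}\}$. The eigenvalue along $\phi_{\alpha_i}$ equals $2\sigma_i s^2+o(s^2)>0$ (the stabilising direction tangent to the branch), while the eigenvalue along $\psi_{\alpha_i}$ equals
\[
\mu_i(s)=s^2\!\left(-\sigma_i+\frac{3}{\|\psi_{\alpha_i}\|_{L^2}^2}\int_\Omega \phi_{\alpha_i}^2\psi_{\alpha_i}^2\,dx\right)+o(s^2),
\]
and its sign decides whether $m(u_i(s))=1$ or $2$.

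The remaining integrals are elementary. Using $\|\phi_\alpha\|_{L^2}^2=\|\psi_\alpha\|_{L^2}^2=\pi^2/4$, $\int_\Omega A^4=\int_\Omega B^4=9\pi^2/64$ and $\int_\Omega A^2B^2=\pi^2/16$, for type M ($\alpha_1=0$, $\phi_0=A$, $\psi_0=-B$) one has $\int_\Omega\phi_0^2\psi_0^2=\pi^2/16$, whence $\mu_1(s)/s^2\to -9/16+3/4=3/16>0$ and $m(u_1(s))=1$; for type D ($\alpha_2=\pi/4$, so $\phi_{\pi/4}^2\psi_{\pi/4}^2=\tfrac14(A^2-B^2)^2$) one finds $\int_\Omega\phi_{\pi/4}^2\psi_{\pi/4}^2=5\pi^2/128$, whence $\mu_2(s)/s^2\to -21/32+15/32=-3/16<0$ and $m(u_2(s))=2$. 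The branches at $\alpha_3=\pi/2$ and $\alpha_4=3\pi/4$ are handled by the reflection symmetries already used in Theorem~\ref{th:delpino}, proving (a) and (b).

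For (c), expand along the branch using $\int_\Omega\phi_{\alpha_i}^4=\sigma_i\pi^2/4$:
\[
I(u_i(s))=-\tfrac{s^2}{2}\bigl(\lambda_i(s)-\lambda_2\bigr)\|\phi_{\alpha_i}\|_{L^2}^2+\tfrac{s^4}{4}\!\int_\Omega\!\phi_{\alpha_i}^4\,dx+o(s^4)=-\tfrac{\pi^2\sigma_i}{16}\,s^4+o(s^4).
\]
Since $s^2=(\lambda-\lambda_2)/\sigma_i+o(\lambda-\lambda_2)$, this reparametrises as $I(u_i(s))=-\pi^2(\lambda-\lambda_2)^2/(16\sigma_i)+o((\lambda-\lambda_2)^2)$. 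Because $\sigma_M=9/16=18/32<21/32=\sigma_D$, the quantity $-1/\sigma_i$ is strictly more negative for type M, so for $\lambda$ close to $\lambda_2(\Omega)$ type M solutions have strictly smaller energy than type D solutions, proving (c). The only real technical point is to carry out the Lyapunov--Schmidt reduction precisely enough to justify the $o(s^2)$ remainder in $\mu_i(s)$; once that is in hand, everything else is bookkeeping around the numerics $9/16$ and $21/32$ already supplied by Theorem~\ref{th:delpino}.
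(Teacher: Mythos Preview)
Your proposal is correct and follows essentially the same route as the paper: both reduce the Morse index question to the two eigenvalues of $L_s$ bifurcating from the two-dimensional kernel $E_2$, compute those to order $s^2$ via the quadratic form $\xi\mapsto\int_\Omega(-\sigma_i+3\phi_{\alpha_i}^2)\xi^2$ on $E_2$ (the paper does this by testing the eigenvalue equation against $\phi_\alpha$ and $\psi_\alpha$, you do it via the reduced matrix, which is the same computation), and then compare energies by expanding the functional along the branches, obtaining $-\tfrac{\pi^2}{16\sigma_i}(\lambda-\lambda_2)^2$ in both cases. The only cosmetic differences are that you phrase the perturbation argument in Kato/Lyapunov--Schmidt language and expand $I$ for \eqref{eq:realACsquare} directly, whereas the paper tests the linearized equation and uses the identity $J(v)=-\tfrac{\lambda}{4}\int v^4$ for the rescaled problem \eqref{eq:ACsquare}.
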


\begin{remark} Surprisingly, by replacing $-u^3$ by $u^3$ in \eqref{eq:ACsquare}, it is proved in \cite[Theorem 1.1]{delPinoGarciaMelianMusso} that solutions of type M have \emph{lower} Morse index than solutions of type $D$, therefore the sign of the nonlinearity alters the structure of the least energy nodal solution.
\end{remark}

\begin{proof}[Proof of Theorem \ref{th:square}] Here, supported by Theorem \ref{th:delpino}, we compute the Morse indices of these solutions, by mimicking the arguments in \cite[Section 3]{delPinoGarciaMelianMusso}, and also their energies.

We must compute the number of negative eigenvalues $\mu$ of 
\[
\Delta \phi + \lambda\phi - 3 u^2 \phi + \mu \phi=0  \ \ \text{in} \ \ \Omega, \quad \phi = 0 \ \ \text{on} \ \ \partial \Omega.
\]
Setting $u = s \phi_{\alpha} + \psi_s$, $\lambda = \lambda_2 + \sigma s^2$  with $\psi_s = o(1)$ and
\begin{equation}\label{eq:sigmadef}
\displaystyle{\ \left.\sigma = \int \phi_{\alpha}^4\large \middle/\int \phi_{\alpha}^2\right.},
\end{equation}
we obtain
\begin{equation}\label{eq:eigenvaluecalculation}
\Delta \phi + \lambda_2 \phi + s^2\sigma \phi - 3 s^2(\phi_{\alpha}+ s^2 \psi_s)^2 \phi + \mu \phi = 0 \ \ \text{in} \ \ \Omega, \quad \phi = 0 \ \ \text{on} \ \ \partial \Omega,
\end{equation}
where $\phi = \phi_s$ and $\mu = \mu_s$ and we normalize $\int \phi_s^2 = \pi^2/4$. As $s$ goes to zero, $(\phi_s, \mu_s) \to (\phi, \mu)$ that solves
\[
\Delta \phi + (\lambda_2 + \mu) \phi = 0 \ \ \text{in} \ \ \Omega, \quad \phi = 0 \ \ \text{on} \ \ \partial \Omega,
\]
whose set of eigenvalue is $\{ \lambda_1 - \lambda_ 2, 0, \lambda_3, \ldots\}$, where $\lambda_i$ are the eigenvalues of $(\Delta, H^1_0(\Omega)$. Therefore, for $s \sim 0$, the first eigenvalue $\mu$ of \eqref{eq:eigenvaluecalculation} is negative, and since $\lambda_2$ has multiplicity two, there are two others eigenvalues (counting their multiplicity) that are close to zero, and we must investigate their sign, and all the others eigenvalues are positive. We denote these two eigenvalues by $\mu_{1,s}$ and $\mu_{2,s}$, and let $\phi_{1,s}$ and $\phi_{2,s}$ be the corresponding eigenfunctions, with $\int \phi_{1,s} \phi_{2,s}=0$ and $\int\phi_{i,s}^2 = \pi^2/4$ for $i=1,2$. Then $\phi_{i,s} \to A_i \phi_{\alpha} + B_i \psi_{\alpha}$, $\mu_{i,s} \to 0$ as $s \to 0$, for $i=1,2$, with
\begin{equation}\label{eq:conditionsAB}
A_i^2 + B_i^2 =1 \ \ \text{and} \ \ A_1A_2 + B_1 B_ 2 = 0.
\end{equation}
Multiplying \eqref{eq:eigenvaluecalculation} by $\phi_{\alpha}$ and integrating by parts we infer that
\[
s^2\sigma \int \phi_{i,s} \phi_{\alpha} - 3 s^2\int (\phi_{\alpha} + s^2 \phi_s)^2 \phi_{i,s} \phi_{\alpha} + \mu_{i,s} \int \phi_{i,s} \phi_{\alpha} = 0.
\]
Then, dividing this equation by $\sigma s^2 \int \phi_{\alpha}^2$ and taking $s \to 0$, we obtain
\begin{equation}\label{eq:forAi}
A_i\left(  -2 + \frac{1}{\sigma}\lim_{s \to 0} \frac{\mu_{i,s}}{s^2}  \right) = 0, \ \ \text{for} \ \ i =1, 2,
\end{equation}
since $\int \phi_{\alpha}^3 \psi_{\alpha} = 0$ and $\sigma \int \phi_{\alpha}^2  = \int \phi_{\alpha}^4$. Next, multiplying \eqref{eq:eigenvaluecalculation} by $\psi_{\alpha}$ and integrating by parts we infer that
\[
s^2\sigma \int \phi_{i,s} \psi_{\alpha} - 3 s^2\int (\phi_{\alpha} + s^2 \phi_s)^2 \phi_{i,s} \psi_{\alpha} + \mu_{i,s} \int \phi_{i,s} \psi_{\alpha} = 0.
\]
Then, dividing this equation by $\sigma s^2 \int \psi_{\alpha}^2=\sigma s^2 \int \phi_{\alpha}^2 = s^2 \int \phi_{\alpha}^4$ and taking $s \to 0$, since $\int \phi_{\alpha}^3 \psi_{\alpha} = 0$, we obtain
\begin{equation}\label{eq:forBi}
B_i\left(  1- 3\frac{\int \phi_{\alpha}^2\psi_{\alpha}^2}{\int\phi_{\alpha}^4} + \frac{1}{\sigma}\lim_{s \to 0} \frac{\mu_{i,s}}{s^2}  \right) = 0, \ \ \text{for} \ \ i =1, 2.
\end{equation}
On the other hand,
\begin{equation}\label{eq:valuesL2L4}
\int \phi_{\alpha}^2 = \frac{\pi^2}{4}, \quad \int\phi_{\alpha}^4 = \frac{3\pi^2}{256}(13 - \cos(4 \alpha)) \ \ \text{and} \ \ 3 \int\phi_{\alpha}^2 \psi_{\alpha}^2 = \frac{3\pi^2}{256}(13 + 3 \cos(4 \alpha)).
\end{equation}
Then, from \eqref{eq:forAi} and \eqref{eq:forBi} we infer that
\begin{equation}\label{eq:finalAiBimed}
\begin{cases}
\displaystyle A_i \left( -2 + \frac{16}{9} \lim_{s \to 0} \frac{\mu_{i,s}}{s^2} \right) = 0  \ \ \text{and} \ \ B_i\left( -\frac{1}{3} + \frac{1}{\sigma}\lim_{s \to 0} \frac{\mu_{i,s}}{s^2}  \right) = 0 \vspace{5pt}\\ \displaystyle \text{for $i= 1,2$, and $\alpha= 0$ or $\alpha=\frac{\pi}{2}$}
\end{cases}
\end{equation}
and
\begin{equation}\label{eq:finalAiBidia}
\begin{cases}
\displaystyle A_i \left( -2 + \frac{32}{21} \lim_{s \to 0} \frac{\mu_{i,s}}{s^2} \right) = 0  \ \ \text{and} \ \ B_i\left( \frac{2}{7} + \frac{32}{21}\lim_{s \to 0} \frac{\mu_{i,s}}{s^2}  \right) = 0 \vspace{5pt}\\ \displaystyle \text{for $i= 1,2$, and $\alpha= \frac{\pi}{4}$ or $\alpha=\frac{3\pi}{4}$}.
\end{cases}
\end{equation}
From \eqref{eq:finalAiBimed} we conclude that $\mu_{1,s}>0$ and $\mu_{2,s}>0$ for $s\sim 0$, and so that the Morse index of $u$ is one in case that $\alpha = 0$ or $\alpha = \frac{\pi}{2}$. On the other hand, from \eqref{eq:finalAiBidia} we obtain that the product $\mu_{1,s} \cdot \mu_{2,s}$ is negative for $s \sim 0$, and so that the Morse index of $u$ is two in case $\alpha = \frac{\pi}{4}$ or $\alpha = \frac{3 \pi}{4}$. This proves (a) and (b).

To conclude, we show which branches correspond to least energy nodal solutions. Observe that, whenever $v$ is a bounded solution of \eqref{eq:ACsquare}, its energy can be written as
\[
J(v)=\frac{1}{2}\int_\Omega (|\nabla v|^2 - \lambda v^2) + \frac{\lambda}{4}\int_\Omega v^4=-\frac{\lambda}{4}\int_\Omega v^4.
\]
Therefore, using \eqref{square:asymptotics}, for $\lambda\sim \lambda_2(\Omega)$ we see that
\begin{align*}
J(\lambda^{-1/2}u_i) \sim -\frac{1}{4\lambda} (\lambda-\lambda_2(\Omega))^2 \frac{1}{\sigma_i^2}\int_\Omega \phi_{\alpha_i}^4 =\begin{cases}
-\frac{\pi^2}{9} (\lambda-\lambda_2(\Omega))^2/\lambda  & \text{ if } i=1,3\vspace{5pt}\\
-\frac{2\pi^2}{21}(\lambda-\lambda_2(\Omega))^2/\lambda  & \text{ if } i=2,4
\end{cases},
\end{align*}
which proves (c).

\end{proof}

\subsection*{Acknowledgements}
Ederson Moreira dos Santos was partially supported by CNPq grant 307358/2015--1 and FAPESP grant 2016/50453--0. Hugo Tavares was supported by the Portuguese government through FCT -- Funda\c c\~ao para a Ci\^encia e a Tecnologia, I.P., under the projects UIDB/MAT/04459/2020, \linebreak UID/MAT/04561/2013UID/MAT/04561/2013 and PTDC/MAT-PUR/28686/2017. Hugo Tavares would also like to acknowledge the Faculty of Sciences of the University of Lisbon for granting a semestral sabbatical leave, during which part of this work was developed.

\end{document}